\newtheorem{Thm}{Theorem}[section]
\newtheorem{Lem}[Thm]{Lemma}
\newtheorem{Prop}[Thm]{Proposition}
\newtheorem{Cor}[Thm]{Corollary}
\theoremstyle{definition}
\newtheorem{Rem}[Thm]{Remark}
\newtheorem{Exa}[Thm]{Example}
\newcommand{\N}{\mathbb{N}} 
\newcommand{\Z}{\mathbb{Z}} 
\newcommand{\R}{\mathbb{R}}
\newcommand{\F}{\mathcal{F}}
\begin{document}

\title[]{Hausdorff dimensions for graph-directed measures driven by infinite rooted trees} 
\author{Kazuki Okamura}
\address{School of General Education, Shinshu University, 3-1-1, Asahi, Matsumoto, Nagano, 390-8621, JAPAN.} 
\email{kazukio@shinshu-u.ac.jp} 
\keywords{Hausdorff dimension for measure, graph-directed measures, singularity problem, de Rham's functional equations}  
\subjclass[2010]{26A27, 26A30, 28A78, 28A80, 60G30} 

\begin{abstract}
We give upper and lower bounds for the Hausdorff dimensions for a class of graph-directed measures when its underlying directed graph is the {\it infinite} $N$-ary tree. 
These measures are different from graph-directed self-similar measures driven by finite directed graphs and are not necessarily Gibbs measures. 
However our class contains several measures appearing in fractal geometry and functional equations, specifically, 
measures defined by restrictions of non-constant harmonic functions on the two-dimensional Sierp\'inski  gasket, 
the Kusuoka energy measures on it, and, measures defined by solutions of de Rham's functional equations driven by linear fractional transformations. 
\end{abstract}

\maketitle

\setcounter{tocdepth}{1}
\tableofcontents 

\section{Introduction and Main results}

Let $N \ge 2$. 
Let $\Sigma_N := \{0,1, \dots, N-1\}$. 
Let $Y$ be a set and $Z$ be a measurable space. 
For $i \in \Sigma_N$, 
let $G_i : Y \to [0,1]$ and $H_i : Y \to Y$ be maps and $F_i : Z \to Z$ be measurable maps.   
Then we consider the following equation for a family of probability measures $\{\mu_y\}_{y \in Y}$ on $Z$:  
\begin{equation}\label{g-me} 
\mu_{y} = \sum_{i \in \Sigma_N} G_i (y) \mu_{H_i (y)} \circ F_i^{-1}, 
\end{equation} 
under the assumption that 
\begin{equation}\label{G-sum-1}
\sum_{i \in \Sigma_N}  G_i (y) = 1. 
\end{equation} 

Equations for probability measures similar to \eqref{g-me} appear in the Markov type measures in  Edgar-Mauldin \cite{EM92}, 
a self-similar family of measures in Strichartz \cite[Definition 2.2]{St93} and the graph directed self-similar measures in Olsen \cite[Section 1.1]{Ol94}.  
Let $(V, E)$ be a directed graph where multi-edges and self-loops are allowed. 
\cite{EM92}, \cite{St93} and \cite{Ol94} focus on the case that $V$ is {\it finite}. 
Several arguments in these references such as the Perron-Frobenius theorem and ergodic theorem for finite Markov chains depend on the fact that $V$ is finite. 
Here we consider the case that $(V, E)$ is the {\it infinite} $N$-ary tree on which each edge is equipped with a direction from a vertex closer to the root to its descendants, and that $\{F_i\}_i$ is an iterated function system and $Z$ is its attractor. 

If $V = Y(y) = \{y\}$ (See \eqref{Def-Yy} and Remark \ref{main-rem} (ii)  below for the definition of $Y(y)$.), 
then,  
\[ \mu_{y} = \sum_{i \in \Sigma_N} G_i (y) \mu_{y} \circ F_i^{-1}. \]
If $\{F_i\}_i$ is a family of contractions on a complete metric space, then, $\mu_y$ is the invariant measure of the iterated function system $(Z, \{F_i\}_{i \in \Sigma_N})$ equipped with probability weights $(G_i(y))_{i \in \Sigma_N}$. 
However, this paper rather focuses on the case that $\mu_y$ is {\it not} an invariant measure of an iterated function system. 
Our class contains several measures which are not not necessarily invariant or Gibbs measures and appear in fractals and functional equations, 
specifically, measures defined by restrictions of non-constant harmonic functions on the two-dimensional standard Sierp\'inski  gasket, the Kusuoka energy measures on it, and furthermore measures defined by solutions of de Rham's functional equations driven by linear fractional transformations. 

We give upper and lower bounds for the Hausdorff dimensions for $\{\mu_y\}_{y \in Y}$ 
under certain regularity conditions for the functions $\{G_i\}_{i \in \Sigma_N}$ and $\{H_i\}_{i \in \Sigma_N}$ on $Y$ 
and the assumption that $\{F_i\}_{i \in \Sigma_N}$ is an iterated function system  on a complete metric space satisfying certain conditions. 
We now state some applications of our results. 
We extend the main result of \cite{ADF14} considering restrictions of non-constant harmonic functions on the two-dimensional standard Sierp\'inski  gasket. 
Our proof gives an alternative proof of singularity of the Kusuoka energy measures on the standard 2-dimensional Sierp\'inski  gasket \cite{Ku89}.   
Furthermore, de Rham's functional equations driven by linear fractional transformations considered in \cite{Ok14} are also generalized. 
Specifically, we deal with the case that an equation is driven by $N$ linear fractional transformations which are {\it weak} contractions. \cite{Ok14} deals with the case that an equation is driven by only two linear fractional transformations which are contractions.
We also discuss singularity of $\mu_y$ with respect to self-similar measures of iterated function systems equipped with probability weights which are not a canonical measure on $Z$. 

\subsection{Framework and main results}

Now we describe our framework and main results. 

Let $\Sigma_N := \{0,1, \dots, N-1\}, \ N \ge 2$. 
Let $S_i, i \in \Sigma_N$, be contractive maps on a complete metric space $(M, d)$, and $K$ be the attractor of the iterated function system $\{S_i\}_{i \in \Sigma_N}$.  
We put the Borel $\sigma$-algebra on $K$ induced by the metric $d$ on $M$.    
For $x \in K$ and $r > 0$, let $B(x,r) = \{y \in K : d(x,y) \le r\}$. 
For $m \ge 1$, let $S_{i_1 \cdots i_m} := S_{i_1} \circ \cdots \circ S_{i_m} $ and $K_{i_1 \cdots i_m} := S_{i_1 \cdots i_m}(K)$.  
For each $i \in \Sigma_N$, let $r_i$ be the Lipschitz constant of $S_i$, in other words, 
\[ r_i := \sup_{x,y; x \ne y} \frac{d(S_i(x), S_i(y))}{d(x,y)}.  \]
Assume that $0 < r_i < 1$ for each $i \in \Sigma_N$. 
Let $s$ be a positive real number such that 
\begin{equation}\label{s} 
\sum_{i \in \Sigma_N} r_i^{s} = 1. 
\end{equation}

Let $Y$ be a set. 
Let $H_i : Y \to Y, \ i \in \Sigma_N$, be maps. 
For each $y \in Y$, let 
\begin{equation}\label{Def-Yy} 
Y(y) := \left\{ H_{i_{l}} \circ \cdots \circ H_{i_{1}}(y) \in Y \mid i_1, \dots, i_l \in \Sigma_N, \ l \ge 1 \right\}. 
\end{equation} 
Let $G_i : Y \to [0,1], \ i \in \Sigma_N$, be maps satisfying that for each $y \in Y$, $\displaystyle \sum_{i \in \Sigma_N}  G_i (y) = 1.$
We remark that this sum is taken with respect to $i \in \Sigma_N$, not to $y \in Y$. 

Let $\dim_H A$ be the Hausdorff dimension of $A \subset K$. 
Let the Hausdorff dimension of a measure $\mu$ be $\dim_H \mu := \inf\{\dim_H B \mid \mu(B) = 1\}$.
 
The following is our main result.  

\begin{Thm}\label{main} 
Under the above setup, a family of probability measures $\{\mu_y\}_{y \in Y}$ on the attractor $K$  satisfying that 
\begin{equation*}\label{g-me-K} 
\mu_{y} = \sum_{i \in \Sigma_N} G_i (y) \mu_{H_i (y)} \circ S_i^{-1}, \ y \in Y, 
\end{equation*} 
exists and is unique. 
Furthermore we have the following two statements for every $y \in Y$: \\ 
(i) Let $s$ be the positive number in \eqref{s}. 
Assume that there exist a constant $\displaystyle \epsilon_0 \in \left(0, \left(\min_{i \in \Sigma_N} r_i\right)^s\right)$ and an integer $l \ge 1$ such that 
\begin{align}\label{multi-sep}
Y(y) &\cap \bigcap_{i \in \Sigma_N} G_{i}^{-1}\left( \left[ r_i^{s} - \epsilon_0, r_i^{s} + \epsilon_0\right]\right) \notag\\
&\cap \bigcap_{j \in \Sigma_N} \left(G_{j} \circ H_{i_{l}} \circ \cdots \circ H_{i_{1}}\right)^{-1} \left( \left[ r_j^{s} - \epsilon_0, r_j^{s} + \epsilon_0\right]\right) = \emptyset 
\end{align}
 for every $i_1, \dots, i_{l} \in  \Sigma_N$. 
Then,  
\begin{equation}\label{strict} 
\dim_H \mu_y < s. 
\end{equation}
(ii) Assume that there exists constants $r \in (0,1), c > 0$ and $D > 0$ such that for every $x \in K$ and $m \ge 1$, 
$$\left| \left\{ (i_1, \cdots, i_m) \in (\Sigma_N)^{m}  \ \middle| \   B(x, c r^m) \cap  K_{i_1 \cdots i_m} \ne \emptyset \right\} \right| \le D,$$  
and furthermore it holds that 
\begin{equation}\label{wAy} 
c \le  \inf_{w \in Y(y)}  G_i (w) \le \sup_{w \in Y(y)}  G_i (w) \le 1-c 
\end{equation}
 for some $i \in \Sigma_N$ and $c > 0$. 
Then, $$\dim_H \mu_y > 0.$$  
\end{Thm} 

\begin{Rem}\label{main-rem} 
(i) If $(M,d)$ is a Euclid space and $S_i, i \in \Sigma_N,$ are similitudes and satisfy the open set condition, then, it holds that $\dim_H K = s$ for the attractor $K$ and the positive number $s$ in \eqref{s} (\cite[Theorem 9.3]{F14}). \\
(ii) The subset $Y(y)$ of $Y$ in \eqref{Def-Yy} corresponds to the infinite $N$-ary tree with root $y$, specifically, the set of vertices is $Y(y)$, and the set of directed edges is given by 
$$\left\{ \left(H_{i_{l}} \circ \cdots \circ H_{i_{1}}(y), \ H_{i_{l+1}} \circ H_{i_{l}} \circ \cdots \circ H_{i_{1}}(y)\right) \mid i_1, \dots, i_l, i_{l+1} \in \Sigma_N, \ l \ge 1 \right\},$$
where we let $H_{i_{l}} \circ \cdots \circ H_{i_{1}}(y) := y$ if $l = 0$. \\
(iii) In the setup above, we regard $Y$ simply as a set, but we will often put a metric structure and a linear structure on $Y$.   
Features of our setup are that $Y$ is only a set and irrelevant to $K$,  $Y$ contains countably many points, $G_i$ and $H_i$, $i \in \Sigma_N$, are not constant functions, and furthermore, $\mu_y$ is {\it not} an invariant measure of a certain iterated function system equipped with probability weights. \\
(iv) The assumption for $(K, \{S_i\}_i)$ in Theorem \ref{main} (ii) is the same as in assumption (2)  in \cite[Corollary 1.3]{Kig95}. 
It is known by Bandt and Graf \cite{BG92} that if this holds then the open set condition for $\{S_i\}_i$ holds. 
It is satisfied if $(M,d)$ is the Euclid space and $S_i, i \in \Sigma_N,$ are similitudes and satisfy the open set condition (See the proof of \cite[Theorem 9.3]{F14}). 
In examples which are dealt with later, \eqref{strict} implies that $\mu_y$ is singular with respect to a ``canonical" probability measure on the attractor $K$ whose Hausdorff dimension is the positive number $s$ satisfying \eqref{s}.  
In the case of homogeneous self-similar sets, it holds that $s = \log N/\log (1/r)$. 
\end{Rem}

As an outline of our proof, we follow \cite[Theorem 1.2]{Ok14}. 
However, our method is more transparent than \cite{Ok14}.
Specifically, we do not use four kinds of random subsets of natural numbers as in \cite[Lemma 3.3]{Ok14}.      
See Lemma \ref{key} below for an alternative way.   
We emphasize that not only Theorem \ref{main} is applicable to the models described above, but also there is potential for applications to different models. 
Indeed, we deal with some special examples other than the models described above.  

\subsection{Comparison with related results}

Our purpose is to know whether $\mu_y$ is absolutely continuous or singular with respect to a natural canonical probability measure on $Z$.   
In several examples, we deal with the case that $Z = K = [0,1]$. 
If $\mu_y$ is singular, then, the distribution function of $\mu_y$ has derivative zero at almost every point. 
The singularity problem proposed by Kaimanovich \cite{Kai03} is a little similar to our motivation. 
\cite{Kai03} considers whether the harmonic measure on a boundary of Markov chain is singular with respect to a ``canonical" measure on the boundary.  
It is also interesting to consider whether not only the harmonic measure but also other measures on the boundary defined in natural ways are singular or not with respect to the canonical measure. 
Indeed, in \cite[p.180]{Kai03}, investigating dynamical properties for the Kusuoka energy measure is proposed, and recently \cite{JOP17} considers them. 
The Kusuoka energy measure is {\it not} a Gibbs measure, and therefore, techniques of the thermodynamic formalism are not suitable to apply. 
See \cite[Section 3]{JOP17} for more details. 

If the Hausdorff dimension of $\mu_y$ is strictly smaller than the Hausdorff dimension of a canonical measure, 
then, $\mu_y$ is singular with respect to the canonical measure. 
So, it is desirable to know an exact value or a good upper bound of the Hausdorff dimension of $\mu_y$.  
In general, for iterated function systems with place-dependent probability weights, deriving a dimension formula for invariant measures is valuable. 
For a class of iterated function systems which are driven by non-linear weak contractions and equipped with place-dependent probability weights, 
Jaroszewska-Rams \cite{JR08} gave an upper bound for the Hausdorff dimension of an invariant measure in the form of the entropy divided by the Lyapunov exponent. 
For a large class of iterated function systems driven by similitudes with considerable overlaps, Hochman \cite{Ho14} obtained the exact dimension formula.  
In order to give an upper bound for the Hausdorff dimension, 
we need to estimate the entropy and the Lyapunov exponent, both of which are the values of integrands with respect to $\mu_y$, which might be {\it singular} with respect to the canonical measure on $K$.
However, intricate calculations are actually required for estimating the Hausdorff dimensions of invariant measures of an iterated function system, 
 in particular when the probability weights of the iterated function system are place-dependent or the iterated function system is driven by non-similitudes.  
See B\'ar\'any-Pollicott-Simon \cite[Sections 7 and 8]{BPS12} and B\'ar\'any \cite[Section 5]{Ba15} for example.   

In the case that $Y$ consists of only one point and furthermore all $F_i, i \in \Sigma_N$, are similitudes on a complete metric space satisfying assumption (2)  in \cite[Corollary 1.3]{Kig95}, then, by \cite{BG92}, 
the open set condition holds for $\{F_i\}_i$ and $\mu_y$ is a self-similar measure where the corresponding probabilities $\{G_i\}_i$ are not place-dependent. 
However, we mainly focus on the case that $Y$ contains at least {\it countably many} points and the case that $\mu_y$ might not be a Gibbs measure or an invariant measure of an iterated function system.  

In the case that $Y$ contains countably many points and has a linear structure, the values of $G_i (y)$ and $H_i(y)$, $i \in \Sigma_N$, can be {\it non-linear} functions on $Y$. 
So, even if we have a form of dimension formula for $\mu_y$ expressed by $\{G_i\}_i$, $\{H_i\}_i$,  and $\mu_y$,  
it is difficult for estimating $\dim_H \mu_y$ from possible dimension formulae. 
In this paper, we focus on the issue whether $\mu_y$ is singular or not with respect to the canonical measure on $K$ whose Hausdorff dimension is the positive number $s$ satisfying \eqref{s} above, 
rather than pursuing a form of dimension formula for $\mu_y$. 
We give upper and lower bounds for the Hausdorff dimensions of $\mu_y$ without deriving any forms of dimension formulae. 

\vspace{1\baselineskip} 

The rest of the paper is organized as follows. 
Section 2 is devoted to the proof of Theorem \ref{main}. 
Section 3 is devoted to deal with various examples including the restriction of harmonic functions, the energy measures on the Sierp\'inski gasket and de Rham's functional equations. 
In Section 4, we state open problems. 

\section{Proof of Theorem \ref{main}}  

Let $\N := \{1,2, \dots \}$. 
Henceforth we put the product $\sigma$-algebra on a symbolic space $(\Sigma_N)^{\N}$.  
Let $\pi : (\Sigma_N)^{\N} \to K$ be a surjective map such that 
\begin{equation}\label{commute}
\pi(ix) = S_i (\pi(x)) \textup{ for every $i \in \Sigma_N$ and $x \in (\Sigma_{N})^{\N}$}.  
\end{equation}   
$\pi$ is uniquely determined and is called the natural projection.   

For $n \in \N$, 
let $X_n (x)$ be the projection of $x \in (\Sigma_N)^{\N}$ to $n$-th coordinate.  
Let $\F_n := \sigma(X_1, \dots, X_n)$.
This is a $\sigma$-algebra on $(\Sigma_N)^{\N}$.  
For $n \ge 1$, let a cylinder set 
\[ I(i_1, \dots, i_n) :=  \left\{ w \in (\Sigma_N)^{\N} \ \middle| \   X_k (w) = i_k, 1 \le k \le n \right\}, \ i_1, \dots, i_n \in \Sigma_N. \]
Consider \eqref{g-me} in the case that $Z = (\Sigma_N)^{\N}$ and $F_i (x) = ix$. 

By \eqref{G-sum-1} and the Kolmogorov extension theorem,   
there exists a unique probability measure $\nu_y$ on $(\Sigma_N)^{\N}$ such that 
\[ \nu_y \left( I(i_1, \dots, i_n) \right) = \prod_{k = 1}^{n} G_{i_{k}} \circ H_{i_{k-1}} \circ \cdots \circ H_{i_{1}}(y). \]
Then, $\{\nu_y\}_{y \in Y}$ is a family of solutions of  \eqref{g-me}. 
Then, by \eqref{commute}, a family of the push-forward measures $\{\nu_y \circ \pi^{-1}\}_{y \in Y}$ is a family of solutions of \eqref{g-me} for the case that $Z = K$ and $F_i = S_i$, $i \in \Sigma_N$.  

Since each $S_i$ is contractive, 
then, by following the proof of \cite[Theorem 2.8]{F97},  
we can show that a family of solutions of \eqref{g-me} for $Z = K$ and $F_i = S_i$ is unique.  
It holds that for every $y \in Y$, 
\begin{equation}\label{mu-nu-pi}
\nu_y \circ \pi^{-1} = \mu_y. 
\end{equation}

Recall the definition of the positive number $s$ in \eqref{s}. 
For every $y \in Y$, $x \in (\Sigma_N)^{\mathbb N}$ and $n \in \mathbb{N}$, let 
\[ R_{y,n} (x) := \frac{\nu_y \left( I(X_1 (x), \dots, X_n (x)) \right)}{(r_{X_1 (x)} \cdots r_{X_n (x)})^s}.\] 
Then it follows from induction in $n$ that 
\begin{Lem}\label{Lem-1}
For every $y \in Y$, $x \in (\Sigma_N)^{\mathbb N}$ and $n  \in \mathbb{N}$, it holds that 
\[ R_{y, n+1} (x) = R_{y, n} (x) G_{X_{n+1}(x)} \circ H_{X_{n}(x)} \circ \cdots \circ H_{X_{1}(x)}(y) (r_{X_{n+1} (x)})^s. \]  
\end{Lem} 

Let 
\[ P_N := \left\{(p_0, \dots, p_{N-1}) \in [0,1]^N  \mid   \sum_{i \in \Sigma_N} p_i = 1 \right\}.\]   
Define a relative entropy $s_N : P_N \to \R$ with respect to $(r_i^s)_{i \in \Sigma_N}$ by 
\[ s_N \left(p_0, \dots, p_{N-1}\right) := \sum_{i \in \Sigma_N} p_i \log \frac{r_i^s}{p_i}, \]
where we put $0 \log 0  =0$.  
We remark that $s_N \left(p_0, \dots, p_{N-1}\right) \le 0$ and $s_N \left(p_0, \dots, p_{N-1}\right) = 0$ if and only if $\left(p_0, \dots, p_{N-1}\right) = \left(r_0^s, \dots, r_{N-1}^s\right)$.  

It can occur that $R_{y, n-1}(x) = 0$, however, it holds that 
$$\nu_y (\{x \in (\Sigma_N)^{\N} \mid R_{y, n-1}(x) = 0\}) = 0$$  
for every $n \ge 1$.  
Hence if we say ``$\nu_y$-a.s.$x$", then, we can assume that $R_{y, n-1}(x) > 0$ for every $n$.  
Then, by Lemma \ref{Lem-1},  
\begin{Lem}\label{ent-condi}
For every $y \in Y$, it holds that 
\[ E^{\nu_y}\left[ -\log \left(\frac{R_{y,n}}{R_{y,n-1}}\right) \ \middle| \  \F_{n-1} \right] (x) = s_N \left( \left(G_{j} \circ H_{X_{n-1}(x)} \circ \cdots \circ H_{X_{1}(x)}(y)\right)_{j \in \Sigma_N} \right), \textup{ $\nu_y$-a.s.$x$. }\] 
\end{Lem}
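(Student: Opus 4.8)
The plan is to reduce the identity to a direct computation over the finite partition of $(\Sigma_N)^{\N}$ into $(n-1)$-cylinders, combining Lemma \ref{Lem-1} with the product form of the cylinder probabilities of $\nu_y$.

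First I would unfold the ratio. Applying Lemma \ref{Lem-1} with $n$ replaced by $n-1$, on the $\nu_y$-co-null set $\{R_{y,n-1}>0\}$ one has
\[
\frac{R_{y,n}(x)}{R_{y,n-1}(x)} \;=\; G_{X_n(x)}\circ H_{X_{n-1}(x)}\circ\cdots\circ H_{X_1(x)}(y),
\]
so, writing $z(x):=H_{X_{n-1}(x)}\circ\cdots\circ H_{X_1(x)}(y)$ --- an $\F_{n-1}$-measurable, $Y$-valued function --- the integrand $-\log\big(R_{y,n}/R_{y,n-1}\big)(x)$ equals $-\log G_{X_n(x)}\big(z(x)\big)$. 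Since $G_i\le 1$, this is nonnegative, so its $\F_{n-1}$-conditional expectation is well defined with values in $[0,\infty]$ and no integrability question arises.

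Next I would compute the conditional expectation cell by cell over the finite partition: $\F_{n-1}$ is generated by $\{I(i_1,\dots,i_{n-1})\}$, so on each cell $I(i_1,\dots,i_{n-1})$ of positive $\nu_y$-measure the conditional expectation is simply the $\nu_y$-average of the integrand over that cell, while the cells of zero $\nu_y$-measure form a $\nu_y$-null set that may be discarded. On such a cell $z(x)$ is the constant $z_0:=H_{i_{n-1}}\circ\cdots\circ H_{i_1}(y)$, the cell decomposes into the $n$-cylinders $I(i_1,\dots,i_{n-1},j)$, $j\in\Sigma_N$, with $\nu_y\big(I(i_1,\dots,i_{n-1},j)\big)=\nu_y\big(I(i_1,\dots,i_{n-1})\big)\,G_j(z_0)$ by the definition of $\nu_y$, and on $I(i_1,\dots,i_{n-1},j)$ the integrand equals $-\log G_j(z_0)$. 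Averaging gives $\sum_{j\in\Sigma_N}-G_j(z_0)\log G_j(z_0)=s_N\big((G_j(z_0))_{j\in\Sigma_N}\big)$, where the convention $0\log0=0$ absorbs the indices $j$ with $G_j(z_0)=0$ (on which the integrand is $+\infty$ but the carrying cylinder is $\nu_y$-null). Since the cell was arbitrary, this is exactly the asserted identity for $\nu_y$-a.s. $x$.

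The only delicate point is the bookkeeping of the several $\nu_y$-null sets --- $\{R_{y,n-1}=0\}$, the cylinders of zero $\nu_y$-measure, and the degenerate coordinates $G_j(z_0)=0$ --- but none of these presents a genuine difficulty, so I expect the lemma to follow immediately from Lemma \ref{Lem-1} and the product structure of $\nu_y$.
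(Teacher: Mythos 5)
Your proposal is correct and is exactly the computation the paper intends: the paper simply asserts that the lemma follows from Lemma \ref{Lem-1}, and the intended argument is precisely your cell-by-cell conditional expectation over the $(n-1)$-cylinders using the product form of $\nu_y$, with the same bookkeeping of null sets. Nothing to add.
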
 

Let $M_{y,0} = 0$. 
For $n \ge 1$, let 
\[ M_{y,n} - M_{y, n-1} := -\log \left(\frac{R_{y,n}}{R_{y, n-1}}\right) - E^{\nu_y}\left[ -\log\left(\frac{R_{y,n}}{R_{y,n-1}}\right) \ \middle| \  \F_{n-1} \right]. \]
(If $R_{y, n-1}(x) = 0$, then, we let 
$(M_{y,n} - M_{y, n-1}) (x) := 0$.  
but such $x$ does not affect integrations with respect to $\nu_y$.)   

Then, $\{M_{y,n}, \mathcal{F}_n\}_{n \ge 0}$ is a martingale under $\nu_y$ and we have that 

\begin{Lem}\label{M-to-0}
For every $y \in Y$, it holds that 
\[ \lim_{n \to \infty} \frac{M_{y,n}}{n} = 0, \ \nu_y\textup{-a.s.} \]
\end{Lem}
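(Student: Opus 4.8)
The plan is to use the fact that $\{M_{y,n}, \F_n\}_{n \ge 0}$ is a martingale under $\nu_y$ with uniformly bounded conditional second moments, and then to deduce $M_{y,n}/n \to 0$ from Kronecker's lemma applied to the $L^2$-convergent series $\sum_k k^{-1}(M_{y,k} - M_{y,k-1})$. Throughout one works with ``$\nu_y$-a.s.$x$'' as in Lemma \ref{ent-condi}, so that $R_{y,n-1}(x) > 0$ for all $n$; the convention $(M_{y,n} - M_{y,n-1})(x) := 0$ on the $\nu_y$-null set $\{R_{y,n-1} = 0\}$ does not affect any integral below.

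The first step is to bound the conditional variance of the increments. By Lemma \ref{Lem-1}, conditionally on $\F_{n-1}$ the variable $-\log(R_{y,n}/R_{y,n-1})$ takes the value $-\log G_j(w)$ with probability $G_j(w)$, $j \in \Sigma_N$, where $w := H_{X_{n-1}(x)} \circ \cdots \circ H_{X_1(x)}(y)$. Since subtracting the conditional mean only decreases the second moment,
\[ E^{\nu_y}\left[ (M_{y,n} - M_{y,n-1})^2 \ \middle| \ \F_{n-1} \right] \le E^{\nu_y}\left[ \left(\log \frac{R_{y,n}}{R_{y,n-1}}\right)^2 \ \middle| \ \F_{n-1} \right] = \sum_{j \in \Sigma_N} G_j(w) \left(\log G_j(w)\right)^2. \]
Because $t \mapsto t(\log t)^2$ is bounded on $[0,1]$ (its maximum is $4e^{-2}$, with the convention $0(\log 0)^2 = 0$), the right-hand side is at most $C_N := 4Ne^{-2}$, a constant depending only on $N$; in particular it is finite and uniformly bounded in $n$, $x$ and $y$. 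Note that this requires \emph{no} lower bound on the weights $G_i$.

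Next I would set $W_{y,n} := \sum_{k=1}^{n} k^{-1}(M_{y,k} - M_{y,k-1})$, which is again a martingale with respect to $\{\F_n\}$. By orthogonality of martingale increments and the previous bound, $E^{\nu_y}[W_{y,n}^2] = \sum_{k=1}^{n} k^{-2} E^{\nu_y}[(M_{y,k} - M_{y,k-1})^2] \le C_N \sum_{k \ge 1} k^{-2} < \infty$, so $\{W_{y,n}\}$ is $L^2$-bounded and hence converges $\nu_y$-a.s. by the martingale convergence theorem. Kronecker's lemma then gives $n^{-1}\sum_{k=1}^{n}(M_{y,k} - M_{y,k-1}) \to 0$ $\nu_y$-a.s., and since $M_{y,0} = 0$ this is precisely the assertion $M_{y,n}/n \to 0$ $\nu_y$-a.s. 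The only delicate point — the ``main obstacle'' — is that the increments $M_{y,n} - M_{y,n-1}$ need not be bounded, since the $G_i$ may approach $0$, so one cannot simply invoke a strong law for bounded martingale differences; the boundedness of $t(\log t)^2$ on $[0,1]$ is exactly what makes the conditional-variance estimate, and hence the $L^2$ argument, go through regardless.
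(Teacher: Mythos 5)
Your proof is correct. The key estimate is the same as the paper's: a uniform bound on the (conditional) second moments of the martingale increments $M_{y,n}-M_{y,n-1}$, coming from the boundedness of $t\mapsto t(\log t)^2$ on $[0,1]$ (the paper packages this as $C:=\sup_{(p_i)\in P_N}\sum_i p_i(-\log p_i)^2<\infty$ and then loses a factor $4$ via Jensen, whereas you use the sharper observation that centering at the conditional mean does not increase the conditional second moment). Where you diverge is in the concluding step: the paper applies Doob's submartingale inequality to $M_{y,\cdot}^2$ along the dyadic times $2^n$, sums the resulting tail bounds, and invokes Borel--Cantelli to get $\limsup_n |M_{y,n}|/n\le\sqrt{\epsilon}$ for each $\epsilon$; you instead form the auxiliary martingale $W_{y,n}=\sum_{k\le n}k^{-1}(M_{y,k}-M_{y,k-1})$, note it is $L^2$-bounded by orthogonality of increments, conclude a.s.\ convergence from the martingale convergence theorem, and finish with Kronecker's lemma. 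Both are standard routes from ``bounded increment variances'' to the martingale strong law; yours is arguably cleaner in that it avoids the dyadic blocking and the slightly delicate bookkeeping in the paper's maximal-inequality display (where the threshold $\epsilon 4^n$ is really playing the role of $\lambda^2$ with $\lambda=\sqrt{\epsilon}\,2^n$), while the paper's route is self-contained at the level of Chebyshev-type estimates and does not need the a.s.\ convergence theorem for $L^2$-bounded martingales. No gap either way.
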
 

\begin{proof} 
This part will be shown in the same manner as \cite[Lemma 2.3 (2)]{Ok14} by using Jensen's inequality and Doob's submartingale inequality.   
Let 
\[ C := \sup_{(p_i)_i \in P_N} \sum_{i \in \Sigma_N} p_i (-\log p_i)^2 < +\infty.\]  
Then, for every $n \ge 1$,  
\[ E^{\nu_y} \left[ \left( \log \frac{R_{y, n+1}}{R_{y, n}}  \right)^2\right] \le \frac{C}{(\min_i r_i)^{2s}}. \]

By this and Jensen's inequality, 
\[ \sup_{n \ge 0} E^{\nu_y} \left[ \left(M_{y, n+1} - M_{y, n}\right)^2 \right] \le \frac{4C}{(\min_i r_i)^{2s}}.\] 

By Doob's submartingale inequality, 
we have that for every $\epsilon > 0$ and every $n \ge 1$, 
\begin{align*} 
\nu_y \left( \max_{1 \le k \le 2^n} |M_{y, k}| \ge \epsilon 4^n \right) &\le \frac{E^{\nu_y} \left[ (M_{y, 2^n})^2\right]}{\epsilon 4^n} \\
&= \frac{ \sum_{k \le 2^n} E^{\nu_y} \left[ \left(M_{y, k} - M_{y, k-1}\right)^2 \right]  }{\epsilon 4^n} \le \frac{C}{(\min_i r_i)^{2s} \epsilon 4^{n-1}}.  
\end{align*} 

Therefore we have 
\[ \limsup_{n \to \infty} \frac{\left|M_{y, n}\right|}{n} \le \sqrt{\epsilon}, \textup{ $\nu_y$-a.s.}\]
\end{proof} 

For $i \ge 1$, $y \in Y$ and $x \in (\Sigma_N)^{\N}$, 
let 
\[ h_i (y; x) := H_{X_{i}(x)} \circ \cdots \circ H_{X_{1}(x)}(y),\]  
and 
\[ p_{i} (y; x) := \left( G_{j} \circ  h_i (y,x) \right)_{j \in \Sigma_N} =  \left( G_{j} \circ H_{X_{i}(x)} \circ \cdots \circ H_{X_{1}(x)}(y) \right)_{j \in \Sigma_N}.  \] 

By Lemmas \ref{ent-condi} and \ref{M-to-0},  we have that  
\begin{Prop}\label{transfer} 
For every $y \in Y$, it holds that 
\[ \limsup_{n \to \infty} \frac{-\log R_{y,n} (x)}{n} = \limsup_{n \to \infty} \frac{1}{n} \sum_{i=1}^{n} s_N \left( p_{i-1} (y; x) \right), \textup{ $\nu_y$-a.s.$x$} \] 
and, 
\[ \liminf_{n \to \infty} \frac{-\log R_{y,n} (x)}{n} = \liminf_{n \to \infty} \frac{1}{n} \sum_{i=1}^{n} s_N \left( p_{i-1} (y; x) \right), \textup{ $\nu_y$-a.s.$x$}. \] 
\end{Prop}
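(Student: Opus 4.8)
The plan is to turn the recursion in Lemma~\ref{Lem-1} into a telescoping sum, recognise the martingale $\{M_{y,n}\}$ of Lemma~\ref{M-to-0} as precisely the fluctuation part of that sum, and then let $M_{y,n}/n \to 0$ do the work. Since $R_{y,n}(x) = \nu_y(I(X_1(x),\dots,X_n(x)))$ with the empty cylinder being the whole space, we have $R_{y,0} \equiv 1$; and, as noted before Lemma~\ref{ent-condi}, for $\nu_y$-a.s.\ $x$ one has $R_{y,n}(x) > 0$ for every $n$, so no division or logarithm below is problematic. For such $x$ I would write the telescoping identity
\[ -\log R_{y,n}(x) = \sum_{i=1}^{n} \left( -\log \frac{R_{y,i}(x)}{R_{y,i-1}(x)} \right). \]

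Next I would invoke the very definition of $M_{y,n}$: for each $i \ge 1$,
\[ -\log \frac{R_{y,i}(x)}{R_{y,i-1}(x)} = \left(M_{y,i} - M_{y,i-1}\right)(x) + E^{\nu_y}\!\left[ -\log \frac{R_{y,i}}{R_{y,i-1}} \ \middle| \ \F_{i-1}\right](x), \]
and by Lemma~\ref{ent-condi} the conditional expectation on the right equals $s_N\!\left( p_{i-1}(y;x) \right)$. Summing over $1 \le i \le n$, and using $M_{y,0} = 0$ so that the martingale increments telescope, I obtain the exact decomposition
\[ -\log R_{y,n}(x) = M_{y,n}(x) + \sum_{i=1}^{n} s_N\!\left( p_{i-1}(y;x) \right), \qquad \textup{$\nu_y$-a.s.\ $x$}. \]

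Dividing by $n$ and applying Lemma~\ref{M-to-0} (which gives $M_{y,n}/n \to 0$ $\nu_y$-a.s.), I would conclude that the two sequences $\dfrac{-\log R_{y,n}(x)}{n}$ and $\dfrac{1}{n}\sum_{i=1}^{n} s_N\!\left( p_{i-1}(y;x) \right)$ differ by a sequence tending to $0$ for $\nu_y$-a.s.\ $x$; hence they have the same $\limsup$ and the same $\liminf$ almost surely, which is exactly both displayed equalities of Proposition~\ref{transfer}.

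I do not expect a genuine obstacle here: once the martingale $\{M_{y,n}\}$ has been set up, the proposition is essentially a restatement of its definition together with Lemma~\ref{M-to-0}. The only points deserving a word of care are the bookkeeping on the $\nu_y$-null event $\{R_{y,n-1}=0\}$ (already absorbed by the convention $M_{y,n}-M_{y,n-1}:=0$ there) and the finiteness of the partial sums $\sum_{i=1}^n s_N(p_{i-1}(y;x))$, which is immediate since $s_N$ is bounded on $P_N$; neither interferes with the argument.
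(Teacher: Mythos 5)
Your proposal is correct and is exactly the argument the paper intends: the paper derives Proposition \ref{transfer} directly from Lemmas \ref{ent-condi} and \ref{M-to-0} without writing out the details, and your telescoping decomposition $-\log R_{y,n} = M_{y,n} + \sum_{i=1}^{n} s_N(p_{i-1}(y;x))$ together with $M_{y,n}/n \to 0$ is precisely the omitted computation. The index bookkeeping (the conditional expectation at step $i$ being $s_N(p_{i-1}(y;x))$) and the handling of the null set where $R_{y,n-1}=0$ are both handled as in the paper.
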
 

\subsection{Proof of (i)}

Throughout this subsection, we assume the assumption of Theorem \ref{main} (i). 

\begin{Lem}\label{exist}
Let $y \in Y$. 
If for some positive constant $\epsilon_1$, 
\[ \limsup_{n \to \infty} \frac{-\log R_{y,n}}{n} \le -\epsilon_1, \textup{ $\nu_y$-a.s.,} \]
then, 
there exists a Borel subset $B_0 \subset K$ such that 
$\mu_y (B_0) = 1$ and 
\[ \dim_H (B_0) < s. \]  
\end{Lem}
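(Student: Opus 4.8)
The plan is to use the standard Frostman-type / Billingsley covering argument that converts an almost-everywhere upper bound on $\frac{-\log R_{y,n}}{n}$ into an upper bound on the Hausdorff dimension of a full-measure set. First I would invoke Egorov's theorem (or simply a countable exhaustion): the hypothesis says $\limsup_n \frac{-\log R_{y,n}(x)}{n} \le a$ for $\nu_y$-a.s.\ $x$, so for every $\delta > 0$ and every $x$ in a set of full $\nu_y$-measure there is an integer $n_0(x)$ with $R_{y,n}(x) \ge e^{-(a+\delta)n}$ for all $n \ge n_0(x)$. Writing $A_m := \{x : R_{y,n}(x) \ge e^{-(a+\delta)n} \text{ for all } n \ge m\}$, we have $A_m \uparrow$ a set of full measure, so $\nu_y(A_m) \to 1$; fix $m$ with $\nu_y(A_m) > 0$ (in fact we only need that the union over $m$ has full measure).

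Next I would push this down to $K$ via $\mu_y = \nu_y \circ \pi^{-1}$ from \eqref{mu-nu-pi}. For a word $w = (i_1,\dots,i_n)$, the cylinder $I(w)$ has $\nu_y$-mass $R_{y,n}(x) = \nu_y(I(w))$ for any $x \in I(w)$, and $\pi(I(w)) = K_{i_1\cdots i_n}$, a set of diameter at most $c_1 r^n$ by Assumption \ref{Kigami}(i). So on the image side, the set $B_0 := \pi\!\left(\bigcup_m A_m\right)$ has $\mu_y(B_0) = 1$, and for each $x \in \bigcup_m A_m$ and each sufficiently large $n$ the piece $K_{X_1(x)\cdots X_n(x)}$ containing $\pi(x)$ has diameter $\le c_1 r^n$ while its $\mu_y$-measure is $\ge \nu_y(I(X_1(x),\dots,X_n(x))) = R_{y,n}(x) \ge e^{-(a+\delta)n}$.

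Then I would run the covering estimate. Fix $m$, set $B_0^{(m)} := \pi(A_m)$, and for $n \ge m$ cover $B_0^{(m)}$ by the sets $\{K_w : w \in (\Sigma_N)^n,\ K_w \cap B_0^{(m)} \ne \emptyset\}$. Any such $K_w$ meets $\pi(A_m)$, hence meets $\pi(I(w)) = K_w$ at a point of the form $\pi(x)$ with $x \in A_m \cap I(w)$, so $\nu_y(I(w)) = R_{y,n}(x) \ge e^{-(a+\delta)n}$, whence there are at most $e^{(a+\delta)n}$ such cylinders. Their diameters are all $\le c_1 r^n$, so for the exponent $s = (a+\delta)/\log(1/r) + \eta$ (any $\eta>0$) the $s$-dimensional Hausdorff pre-measure at scale $c_1 r^n$ is bounded by $e^{(a+\delta)n} \cdot (c_1 r^n)^s = c_1^s\, e^{(a+\delta)n}\, e^{-sn\log(1/r)} = c_1^s\, e^{-\eta n \log(1/r)} \to 0$ as $n \to \infty$. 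Hence $\dim_H B_0^{(m)} \le (a+\delta)/\log(1/r) + \eta$; letting $\eta \downarrow 0$ gives $\dim_H B_0^{(m)} \le (a+\delta)/\log(1/r)$, and since $B_0 = \bigcup_m B_0^{(m)}$ is a countable union, $\dim_H B_0 \le (a+\delta)/\log(1/r)$. Finally let $\delta \downarrow 0$ along a sequence (taking a countable union of the corresponding full-measure sets) to obtain a Borel $B_0$ with $\mu_y(B_0) = 1$ and $\dim_H B_0 \le a/\log(1/r)$.

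The only genuinely delicate points are bookkeeping rather than conceptual: making sure $B_0$ is Borel (the sets $A_m$ are defined by countably many measurable conditions on $R_{y,n}$, hence Borel, and $\pi$ is continuous so images of the relevant Borel sets are analytic — one can instead work with $\pi^{-1}$ of a cover, or note that Hausdorff dimension is well-behaved on analytic sets, or simply replace $B_0$ by a Borel subset of full measure), and handling the $\nu_y$-null set where $R_{y,n}(x) = 0$ for some $n$, which is excluded exactly as in the discussion preceding Lemma \ref{ent-condi}. Assumption \ref{Kigami}(ii) is not actually needed for this direction — only part (i), the diameter bound, enters — so I would not use it here. I expect the main obstacle to be none too severe; it is the routine measure-theoretic care in the exhaustion/Egorov step and the choice of $B_0$.
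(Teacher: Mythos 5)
Your proposal is essentially the paper's own proof: the paper defines $A_{n,\epsilon}=\{-\log R_{y,n}\le n(a+\epsilon)\}$, covers $\pi\bigl(\bigcap_{n\ge m}A_{n,1/k}\bigr)$ by the images $\pi(I(w))$ of the cylinders in $\mathcal{A}(n,1/k)=\{w:\nu_y(I(w))\ge e^{-n(a+1/k)}\}$, bounds $|\mathcal{A}(n,1/k)|\le e^{n(a+1/k)}$ by total mass one, uses Assumption \ref{Kigami}(i) for the diameters, and concludes exactly as you do, including your observation that Assumption \ref{Kigami}(ii) is not needed here. The only misstep is in your covering argument: you cover $\pi(A_m)$ by $\{K_w: K_w\cap\pi(A_m)\ne\emptyset\}$ and then assert that such a $K_w$ contains a point $\pi(x)$ with $x\in A_m\cap I(w)$; since $\pi$ need not be injective, a point of $K_w\cap\pi(A_m)$ may only have preimages in $A_m$ lying in \emph{other} cylinders, so the lower bound $\nu_y(I(w))\ge e^{-(a+\delta)n}$ does not follow for every $w$ in your cover (one could patch this with Assumption \ref{Kigami}(ii) at the cost of a factor $D$, contradicting your claim that (ii) is unused). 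The clean fix, which is what the paper does, is to cover by $\{\pi(I(w)): I(w)\cap A_m\ne\emptyset\}$ instead: every $x\in A_m$ lies in its own cylinder $I(X_1(x),\dots,X_n(x))$, which then has $\nu_y$-measure $\ge e^{-(a+\delta)n}$ by definition of $A_m$, and the cardinality bound is immediate. With that one-line repair your argument coincides with the paper's.
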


\begin{proof}
For $0 < t < \epsilon_1$, let 
$$A_{n, t} := \left\{-\log R_n \le n(-\epsilon_1 + t) \right\} \subset (\Sigma_N)^{\N}.$$
Then, by the assumption  and \eqref{mu-nu-pi}, 
\[ \mu_y \left( \pi \left(\bigcap_{k \ge 1} \bigcup_{m \ge 1} \bigcap_{n \ge m}  A_{n, 1/k}\right)\right) = 1. \]
Now we will show that 
\begin{equation}\label{suff-dim-H}
\dim_H \pi\left( \bigcap_{k \ge 1} \bigcup_{m \ge 1} \bigcap_{n \ge m} A_{n, 1/k} \right) < s. 
\end{equation} 
 
For $0 < t < \epsilon_1$, let 
\[ \mathcal{A}(n, t) := \left\{ (i_1, \dots, i_n) \in (\Sigma_N)^{n} \ \middle| \ \nu_y \left(I(i_1, \dots, i_n) \right) \ge (r_{i_1} \cdots r_{i_n})^s \exp\left(n(\epsilon_1 -t)\right)\right\}. \]
Since 
\[ \sum_{(i_1, \dots, i_n) \in (\Sigma_N)^{n}} \nu_y \left(I(i_1, \dots, i_n) \right) = 1,\]   
we have that 
\begin{equation}\label{Ant-small} 
\sum_{(i_1, \dots, i_n) \in \mathcal{A}(n, t)}  (r_{i_1} \cdots r_{i_n})^s \le \exp\left(-n(\epsilon_1 -t)\right). 
\end{equation}

Fix $k \ge 1$ and $m \ge 1$. 
Denote the diameter of a subset $A$ of $K$  by $\textup{diam}(A)$, 
in other words, 
\[ \textup{diam}(A) = \sup\left\{ d(x,y) | x, y \in A \right\}. \] 
By our assumptions, we have that for some  $c_1 > 0$, 
$$\textup{diam}\left( \pi( I(i_1, \dots, i_n) ) \right) = \textup{diam}(K_{i_1 \cdots i_n}) \le c_1 r_{i_1} \cdots r_{i_n} \le c_1 \left( \max_i r_i \right)^n, \ n \ge 1.$$
Then, we have that for every $n \ge m$, 
\[ \pi\left( \bigcap_{n \ge m}  A_{n, 1/k}\right) \subset \bigcup_{(i_1, \dots, i_n) \in \mathcal{A}(n, 1/k)} \pi \left( I(i_1, \dots, i_n) \right),  \]
and, for every $u \in (0, s)$, 
\begin{align*} 
\sum_{(i_1, \dots, i_n) \in \mathcal{A}(n, 1/k)}  \textup{diam}\left( \pi \left( I(i_1, \dots, i_n) \right) \right)^u
&\le c_1^u \sum_{(i_1, \dots, i_n) \in \mathcal{A}(n, 1/k)}  (r_{i_1} \cdots r_{i_n})^u  \\
&\le c_1^u \sum_{(i_1, \dots, i_n) \in \mathcal{A}(n, 1/k)}  (r_{i_1} \cdots r_{i_n})^s (r_{i_1} \cdots r_{i_n})^{u-s}  \\
&\le  c_1^u  \left(\max_i \frac{1}{r_i}\right)^{(s-u)n} \exp\left(-n\left(\epsilon_1 -\frac{1}{k}\right)\right),  
\end{align*} 
where in the final inequality we have used \eqref{Ant-small} and the assumption that $r_i > 0$.  
Hence, if $k$ is sufficiently large and $u$ is sufficiently close to but strictly smaller than $s$, 
\[ \mathcal{H}_{u} \left( \pi\left( \bigcap_{n \ge m}  A_{n, 1/k}\right)  \right) = 0, \] 
where we let $\mathcal{H}_s$ be the $s$-dimensional Hausdorff measure on $M$.   
Hence, 
\[ \dim_H \pi \left(\bigcup_{m \ge 1} \bigcap_{n \ge m} A_{n, 1/k}\right) \le u < s. \]
Hence \eqref{suff-dim-H} follows. 
\end{proof}

The following is different from a part of the proof of \cite[Theorem 1.2 (ii)]{Ok14}, specifically, \cite[Lemma 3.3]{Ok14}. 

\begin{Lem}\label{key}
Let $y \in Y$. 
Assume that \eqref{multi-sep} holds for $(i_k)_{1 \le k \le l}$.   
Then, 
for every $i \in \N$ and $x \in (\Sigma_N)^{\N}$ satisfying that $X_{i +k}(x) = i_k, 1 \le k \le l$,  
\[ \sum_{k =1}^{l} s_{N}\left(p_{i + k} (y; x)\right) \le  \sup\left\{s_N \left((p_j)_{j \in \Sigma_N} \right) \ \middle| \  \sum_{j} \left|p_j - r_j^s \right| > \epsilon_0  \right\}. \]  
\end{Lem}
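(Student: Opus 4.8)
The plan is to deduce the entropy estimate from a single geometric fact: one of the probability vectors $p_{i+1}(y;x),\dots,p_{i+l}(y;x)$ must sit a definite distance from the uniform vector, and that deficient vector can be pinned to the far end of the window by testing the disjointness hypothesis \eqref{multi-sep} at one vertex of $Y(y)$. Write $S$ for the supremum appearing on the right-hand side. I first record two facts about $s_N$: one always has $s_N\le\log N$, and if even a single coordinate of $q=(q_j)_{j\in\Sigma_N}\in P_N$ satisfies $|q_j-1/N|>\epsilon_0$, then $\sum_j|q_j-1/N|>\epsilon_0$ and hence $s_N(q)\le S$. It therefore suffices to exhibit one index $k_0\in\{1,\dots,l\}$ with $s_N(p_{i+k_0}(y;x))\le S$, since the remaining $l-1$ terms are each at most $\log N$. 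Equivalently, arguing by contraposition: if the sum exceeded $(l-1)\log N+S$, then, as each term is at most $\log N$, \emph{every} term would have to exceed $S$, forcing every coordinate of each $p_{i+k}(y;x)$ into $[1/N-\epsilon_0,\,1/N+\epsilon_0]$.

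The key step is to locate the test vertex. Set $z:=h_i(y;x)=H_{X_i(x)}\circ\cdots\circ H_{X_1(x)}(y)$, which lies in $Y(y)$ because $i\ge1$, and use the hypothesis $X_{i+k}(x)=i_k$ for $1\le k\le l$ to compute $h_{i+l}(y;x)=H_{i_l}\circ\cdots\circ H_{i_1}(z)$. With this identity the two factors in \eqref{multi-sep}, evaluated at $z$, become statements about the window: membership of $z$ in $\bigcap_{i\in\Sigma_N}G_i^{-1}([1/N-\epsilon_0,1/N+\epsilon_0])$ says that every coordinate of $p_i(y;x)$ is within $\epsilon_0$ of $1/N$, while membership of $z$ in $\bigcap_{j\in\Sigma_N}(G_j\circ H_{i_l}\circ\cdots\circ H_{i_1})^{-1}([1/N-\epsilon_0,1/N+\epsilon_0])$ says the same for $p_{i+l}(y;x)$. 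Since \eqref{multi-sep} asserts that the intersection of these two sets with $Y(y)$ is empty while $z\in Y(y)$, at least one of the two coordinatewise-closeness conditions must fail at $z$.

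If the failure occurs in the second factor, some coordinate of $p_{i+l}(y;x)$ is farther than $\epsilon_0$ from $1/N$, so $s_N(p_{i+l}(y;x))\le S$ with $k_0=l$ inside the window, and bounding the remaining $l-1$ terms by $\log N$ closes the argument. The main obstacle is exactly the resolution of this dichotomy: the emptiness of the intersection equally permits the \emph{first} factor to be the one that fails, and that factor reports on $p_i(y;x)$, whose index $i$ lies one step outside the window $\{i+1,\dots,i+l\}$. The substantive work therefore lies in guaranteeing that the deficient index always falls in $\{i+1,\dots,i+l\}$ rather than at $i$ — for instance by verifying that in the complementary case $p_i(y;x)$ is itself coordinatewise close to uniform, so that the first factor holds at $z$ and the failure is forced onto $p_{i+l}(y;x)$. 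Making this precise, so that $z$ is genuinely exhibited in the forbidden intersection of \eqref{multi-sep} and the contradiction is reached, is the step I expect to demand the most care.
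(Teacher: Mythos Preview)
Your approach is exactly the paper's: apply \eqref{multi-sep} at the test vertex $z=h_i(y;x)\in Y(y)$ and split according to which of the two factors fails. The obstacle you flag at the end is not a matter of extra care but a genuine off-by-one in the statement, and the paper's own proof does not resolve it either. Its Case~1 reads ``since $\|p_i(y;x)-(1/N,\dots,1/N)\|>\epsilon_0$, $s_N(p_i(y;x))\le S$; therefore $\sum_{k=1}^{l}s_N(p_{i+k}(y;x))<(l-1)\log N+S$'' --- but $p_i$ does not occur among the summands $p_{i+1},\dots,p_{i+l}$, so the ``therefore'' is a non sequitur. In fact the inequality as written can fail: take $N=2$, $Y=\{a,b\}$ with $G_0(a)=G_1(a)=\tfrac12$, $G_0(b)=\tfrac14$, $G_1(b)=\tfrac34$, $H_0(a)=b$, $H_0(b)=a$, $H_1=\textup{id}$, and $y=a$. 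Then \eqref{multi-sep} holds with $l=1$, $i_1=0$ and any small $\epsilon_0$, yet choosing $i=1$ and $X_1(x)=X_2(x)=0$ gives $h_2(a;x)=a$, so $p_2(a;x)=(\tfrac12,\tfrac12)$ and $s_2(p_2)=\log 2>S$.

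The fix is to enlarge the window by one index. The dichotomy you derived shows that one of $p_i,p_{i+1},\dots,p_{i+l}$ lies at $\ell^1$-distance greater than $\epsilon_0$ from the uniform vector, hence
\[
\sum_{k=0}^{l} s_{N}\bigl(p_{i+k}(y;x)\bigr)\ \le\ l\,\log N + S,
\]
which follows immediately from your two cases with no residual difficulty. This corrected form is all that is needed in the proof of Theorem~\ref{main}(i): Lemma~\ref{pat} still supplies a positive-density set of indices $i$ carrying the pattern $X_{i+k}=i_k$, and the windows $\{i,\dots,i+l\}$ continue to cover a positive fraction of $\N$. So there is nothing further to ``make precise''; you have correctly diagnosed an indexing slip in the stated lemma.
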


If there exist a constant $\displaystyle \epsilon_0 \in (0, \min_i r_i^s)$, and an integer $l \ge 1$ 
such that \eqref{multi-sep} holds for {\it every} $i_1, \dots, i_{l} \in  \Sigma_N$, then 
this inequality holds without the constraint that $X_{i +k}(x) = i_k, 1 \le k \le l$. 

\begin{proof} 
In this proof, $\| \cdot \|$ denotes the $\ell^1$-norm on $\R^l$. We recall that $s_N \left((p_j)_{j \in \Sigma_N} \right) = 0$ if and only if $p_i = r_i^s$ for every $i$. 
  
Case 1.  
Since 
\[ \left\| p_{i} (y; x) - \left(r_0^s, \cdots, r_{N-1}^s\right) \right\| > \epsilon_0, \] 
it holds that 
\[  s_{N}(p_{i} (y; x)) \le \sup\left\{s_N \left((p_j)_{j \in \Sigma_N} \right) \ \middle| \  \sum_{j \in \Sigma_N} \left| p_j - r_j^s \right| > \epsilon_0  \right\}.  \]
Therefore, 
\[ \sum_{k = 1}^{l} s_{N}\left(p_{i + k} (y; x)\right) < \sup\left\{s_N \left((p_j)_{j \in \Sigma_N} \right) \ \middle| \  \sum_{j \in \Sigma_N} \left|p_j - r_j^s \right| > \epsilon_0  \right\}. \]   

Case 2. 
If \[ \left\| p_{i} (y; x) - \left(r_0^s, \cdots, r_{N-1}^s \right) \right\| \le \epsilon_0,   \] 
and moreover $X_{i +k}(x) = i_k, 1 \le k \le l$,
then, by \eqref{multi-sep},      
\[ \left\| p_{i + l} (y; x) - \left(r_0^s, \cdots, r_{N-1}^s\right) \right\|  \ge \left| G_{i + l} (h_{i+l}(y; x)) - r_{i_l}^s \right| > \epsilon_0, \]  
and hence,  
\[ s_{N}\left(p_{i + l} (y; x)\right) \le \sup\left\{s_N \left((p_j)_{j \in \Sigma_N} \right) \ \middle| \  \sum_{j \in \Sigma_N} \left|p_j - r_j^s \right| > \epsilon_0  \right\}.  \]
Therefore, 
\[ \sum_{k = 1}^{l} s_{N}(p_{i + k} (y; x)) < \sup\left\{s_N \left((p_j)_{j \in \Sigma_N} \right) \ \middle| \  \sum_{j \in \Sigma_N} \left|p_j - r_j^s \right| > \epsilon_0  \right\}. \] 
Thus we have the assertion. 
\end{proof}

\begin{Lem}\label{pat}
Let $y \in Y$. 
Then, for every $i_1, \dots, i_l \in \Sigma_N$, 
there exists a non-random constant $c_1 > 0$ such that for $\nu_y$-a.s.$x$, there exists a random subset $I(x) \subset \N$ such that 
\[\liminf_{n \to \infty} \frac{\left| I(x) \cap \{1, \dots, n\} \right|}{n} \ge c_1, \]
and furthermore it holds that $ X_{i +k}(x) = i_k$ for every $i \in I(x)$ and every $1 \le k \le l$.
\end{Lem}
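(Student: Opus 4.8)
The plan is to realize $I(x)$ as the set of ``occurrence positions'' of the finite word $i_1 \cdots i_l$ in the symbolic sequence $x$, and to control its lower density by a quantitative recurrence argument driven by the uniform positivity of the weights $G_i$ on $Y(y)$. Concretely, since the statement must hold for $\nu_y$-a.s.\ $x$ with a non-random $c_1 > 0$, I would first invoke \eqref{wA-y} (or the stronger \eqref{A-y}) to obtain a constant $c > 0$ with $\inf_{z \in Y(y)} G_{i_k}(z) \ge c$ for each relevant index; more precisely, for the specific word $i_1 \cdots i_l$ we want positivity of the product of the conditional probabilities along that word, uniformly over the ``history'' $z \in Y(y)$ at which we condition. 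By Lemma \ref{Lem-1}, conditionally on $\F_m$ the probability under $\nu_y$ that $X_{m+1} \cdots X_{m+l} = i_1 \cdots i_l$ equals $\prod_{k=1}^{l} G_{i_k}\bigl(H_{i_{k-1}} \circ \cdots \circ H_{i_1}(h_m(y;x))\bigr)$, and since $h_m(y;x) \in Y(y)$ and each intermediate point also lies in $Y(y)$, this conditional probability is bounded below by a non-random constant $\delta := c^{l} > 0$, uniformly in $m$ and in $x$.

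Next I would pass from this uniform conditional lower bound to a positive-density statement. Partition $\N$ into consecutive blocks of length $l$: for $j \ge 0$ let $B_j := \{jl+1, \dots, jl+l\}$, and let $\xi_j := \mathbf{1}\{X_{jl+1} \cdots X_{jl+l} = i_1 \cdots i_l\}$. The previous paragraph gives $\nu_y(\xi_j = 1 \mid \F_{jl}) \ge \delta$ for every $j$. This means $\sum_{j=0}^{n-1}\bigl(\xi_j - \nu_y(\xi_j = 1\mid \F_{jl})\bigr)$ is a martingale (in $n$) with bounded increments, hence by the martingale strong law (or Azuma--Hoeffding) $\frac{1}{n}\sum_{j=0}^{n-1}\xi_j \ge \delta - \frac{1}{n}\sum_{j=0}^{n-1}\bigl(\nu_y(\xi_j=1\mid\F_{jl}) - \xi_j\bigr) \to$ a limit that is $\ge \delta$ a.s. Therefore, $\nu_y$-a.s., $\liminf_{n\to\infty} \frac{1}{n}\bigl|\{0 \le j < n : \xi_j = 1\}\bigr| \ge \delta$. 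Setting $I(x) := \{jl+1 : \xi_j(x) = 1,\ j \ge 0\}$, every $i \in I(x)$ satisfies $X_{i+k}(x) = i_k$ for $1 \le k \le l$ by construction (note the index shift: if $i = jl+1$ then $X_{i+k-1} = X_{jl+k}$, so one should either define $\xi_j$ to start at $jl$ or shift $I(x)$ by one; either way the combinatorics is routine), and among the first $n$ integers $I(x)$ contains at least $\bigl|\{0 \le j < \lfloor n/l\rfloor : \xi_j = 1\}\bigr|$ elements, giving lower density at least $\delta/l =: c_1 > 0$.

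The main obstacle is the bookkeeping around the exact meaning of ``occurrence at position $i$'' versus the block decomposition, together with making sure the constant $c_1$ is genuinely non-random and depends only on $l$, the word $i_1\cdots i_l$, and the uniform bound $c$ from \eqref{wA-y}/\eqref{A-y} — not on $x$ or on $y$ beyond what is allowed. Using disjoint length-$l$ blocks is what makes the $\F_{jl}$-conditioning clean and the martingale argument immediate; the only mild loss is the factor $1/l$ in the density, which is harmless since we just need \emph{some} positive $c_1$. A secondary point to handle carefully is the a.s.-null set where $R_{y,n-1}(x) = 0$: as already noted in the text this set is $\nu_y$-negligible, so restricting to $\{x : R_{y,n}(x) > 0 \ \forall n\}$ costs nothing and makes all conditional-probability manipulations legitimate.
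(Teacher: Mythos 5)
Your proof is correct and follows essentially the same route as the paper: disjoint blocks of length $l$, the uniform conditional lower bound $\widetilde c^{\,l}$ on the probability of seeing the word in a block (coming from the uniform positivity of the $G_i$ on $Y(y)$), and a bounded-increment martingale argument — the paper uses Azuma's inequality plus Borel--Cantelli where you invoke the martingale strong law, but these are interchangeable here. One small caution: (wA-y) only controls a single index $i \in \Sigma_N$, so for an arbitrary word $i_1 \cdots i_l$ you genuinely need the full hypothesis (A-y), which is exactly what the paper assumes at this point.
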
 

\begin{proof}
Fix $i_1, \dots, i_l \in \Sigma_N$. 
Let 
\begin{equation}\label{inf-multi}
\widetilde c = \widetilde c(y)  := \inf_{i \in \Sigma_N, z \in Y(y)} G_i (z).
\end{equation}  
By \eqref{A-y}, we have that $\widetilde c (y) > 0$. 
Let $C(n) := \left\{X_{(n-1)l + k} = i_k, 1 \le k \le l \right\}$.  
Let $\widetilde M_0 := 0$ and for $n \ge 1$, 
$\widetilde M_n - \widetilde M_{n-1} := 1_{C(n)} - \widetilde c^{l}$.  
Then, $|\widetilde M_n - \widetilde M_{n-1}| \le 2$, and, 
$\{\widetilde M_n, \F_{ln}\}_n$ is a submartingale. 
Then, by Azuma's inequality \cite{A67},    
\[ \nu_y \left(\sum_{k = 1}^{n} 1_{C(k)} < \frac{n \widetilde c^{l}}{2}\right) 
= \nu_y \left( \widetilde M_n < -  \frac{n  \widetilde c^{l}}{2} \right) \le \exp\left(-\frac{n \widetilde c^{2l}}{32}\right).\]
Hence by the Borel-Cantelli lemma, 
\[ \liminf_{n \to \infty} \frac{1}{n} \sum_{k = 1}^{n} 1_{C(k)} \ge \frac{\widetilde c^{l}}{2}, \textup{ $\nu_y$-a.s.} \]
\end{proof}

\begin{proof}[Proof of Theorem \ref{main} (i)]  
Let $y \in Y$. 
By Lemmas \ref{key} and \ref{pat}, 
there exists $\epsilon_1 > 0$ such that 
\[ \limsup_{n \to \infty} \frac{1}{n} \sum_{k = 1}^{n} s_{N}\left(p_{k} (y; x)\right) \le - \epsilon_1,  \ \textup{ $\nu_y$-a.s.$x \in (\Sigma_N)^{\N}$. } \] 
By this and Proposition \ref{transfer}, 
\[ \limsup_{n \to \infty} \frac{-\log R_{y,n}}{n} \le - \epsilon_1, \ \textup{ $\nu_y$-a.s.} \] 
By this and Lemma \ref{exist}, 
\[ \dim_H \mu_y < s. \]
\end{proof} 

\begin{Rem}\label{A-B-y}
The conclusion also holds if we replace our assumptions with the conditions that 
\begin{equation}\label{A-y} 
0 < \inf_{i \in \Sigma_N, w \in Y(y)}  G_i (w) \le \sup_{i \in \Sigma_N, w \in Y(y)}  G_i (w) < 1, 
\end{equation}
and that \eqref{multi-sep} holds for some $\epsilon_0 \in (0, 1/N)$, $l \ge 1$ and $i_1, \dots, i_{l} \in  \Sigma_N$. 
\end{Rem}

\subsection{Proof of (ii)} 

For $y \in Y$, let 
\begin{equation}\label{widetilde} 
\widetilde R_{y,n}(x) := \nu_y \left( I(X_1 (x), \dots, X_n (x)) \right). 
\end{equation} 

\begin{Lem}\label{any}
Let $y \in Y$. 
Assume that 
\[ \liminf_{n \to \infty} \frac{-\log \widetilde R_{y,n}}{n} \ge a, \textup{ $\nu_y$-a.s.} \]
Then,  
it holds that $\mu_y (K_1) = 0$ for every Borel subset $K_1$ of $K$ such that 
\begin{equation}\label{dim-Haus-small} 
\dim_H (K_1) < \frac{a}{\log (1/r)}.  
\end{equation}   
\end{Lem}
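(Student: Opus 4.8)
The plan is to mirror the argument of Lemma \ref{exist}, but now establishing a \emph{lower} bound for the mass distribution of $\mu_y$ on cylinders and invoking the mass distribution principle (Frostman's lemma) rather than a direct Hausdorff-measure covering estimate. First I would fix a Borel set $K_1 \subset K$ with $\dim_H(K_1) < a_2/\log(1/r)$, and choose $s$ and $\epsilon > 0$ so that $\dim_H(K_1) < s < (a_2 - \epsilon)/\log(1/r)$; it then suffices to show $\mu_y(K_1') = 0$ for the (possibly larger) set $K_1'$ of points where the liminf condition can fail on an exceptional null set, so by \eqref{mu-nu-pi} it is enough to work upstairs on $(\Sigma_N)^{\N}$ with $\nu_y$. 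Define $A_{n,\epsilon} := \{-\log R_{y,n} \ge n(a_2 - \epsilon)\}$; by hypothesis $\nu_y(\liminf_n A_{n,\epsilon}) = 1$, so $\mu_y$ is carried by $B_\infty := \pi(\bigcup_{m} \bigcap_{n \ge m} A_{n,\epsilon})$.

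Next I would transfer the problem to $M$. For a point $w = \pi(x)$ with $x \in \bigcap_{n\ge m} A_{n,\epsilon}$ and a small radius $\rho$, pick $n = n(\rho)$ with $c_1 r^n \le \rho$ (comparable), so that $B(w,\rho)$ meets at most $D$ of the sets $K_{j_1\cdots j_n}$ by Assumption \ref{Kigami}(ii). Each such cylinder $K_{j_1\cdots j_n}$ that actually meets $B(w,\rho)$ and intersects the support of $\mu_y$ restricted to $B_\infty$ must, along the relevant subsequence, satisfy $\nu_y(I(j_1,\dots,j_n)) = R_{y,n} \le \exp(-n(a_2-\epsilon))$. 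Hence
\[
\mu_y(B(w,\rho)) \le D \exp\bigl(-n(a_2-\epsilon)\bigr) \le D\, \rho^{\,(a_2-\epsilon)/\log(1/r)} \le D\, \rho^{\,s},
\]
up to the harmless constant coming from $c_1$ in $c_1 r^n \le \rho$. This shows that $\mu_y$ restricted to $B_\infty$ (more precisely to the portion of $B_\infty$ seen along the appropriate subsequence) has upper $s$-density bounded, and by the standard mass distribution principle any set of Hausdorff dimension strictly less than $s$ — in particular $K_1$ — has $\mu_y$-measure zero. A small amount of care is needed to handle the set $\{R_{y,n} = 0\}$, which is $\nu_y$-null for every $n$ as noted before Lemma \ref{ent-condi}, so it does not affect the estimate.

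The main obstacle is the bookkeeping around the exceptional null set and the subsequence structure in $\bigcup_m \bigcap_{n\ge m} A_{n,\epsilon}$: the density bound above only holds along radii $\rho \sim r^n$ with $n \ge m(x)$, and $m$ depends on $x$, so one must decompose $B_\infty$ into the countably many pieces $\{m(x) \le m_0\}$ and estimate $\mu_y$ on each; on each such piece the density estimate is uniform for all small $\rho$, which is exactly what the mass distribution principle requires. Once this stratification is in place, countable additivity of $\mu_y$ finishes the argument, since $\dim_H K_1 < s$ forces $\mathcal H_s(K_1) = 0$ and hence $\mu_y(K_1 \cap \{m(x) \le m_0\}) = 0$ for each $m_0$.
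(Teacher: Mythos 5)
Your proposal is correct and follows essentially the same route as the paper: both use Assumption \ref{Kigami}(ii) to compare a small set (a ball of radius $\sim r^n$ in your version, a covering set $U_{n,l}$ in the paper's) with at most $D$ level-$n$ cylinders, bound each relevant cylinder's $\nu_y$-measure by $\exp(-n(a_2-\epsilon))$ via the liminf hypothesis, and stratify over the index $m$ after which that bound holds before letting $m \to \infty$. The only cosmetic difference is that you package the estimate as a mass distribution principle while the paper runs the covering argument directly; your explicit slack $s < (a_2-\epsilon)/\log(1/r)$ is in fact slightly more careful than the paper's use of the events $\{-\log R_{y,k} \ge k a_2\}$ without a margin.
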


\begin{proof}
Let $n \ge 1$ and $\delta > 0$. 
Assume that \eqref{dim-Haus-small} holds. 
Then, we can take open sets $\{U_{n,l}\}_{l}$ in $K$ such that  for every $l \ge 1$, 
$$\textup{diam}(U_{n,l}) \le c r^{n},$$   
\begin{equation}\label{K-covered} 
K_1 \subset \bigcup_{l \ge 1} U_{n,l}, 
\end{equation}  
and 
\begin{equation}\label{sum-diam-delta} 
\sum_{l \ge 1} \textup{diam}(U_{n,l})^{a/\log (1/r)} < \delta.  
\end{equation} 

Let $k(n,l)$ be an integer such that 
$$c r^{k(n,l)} \le \textup{diam}(U_{n,l}) < c r^{k(n,l)-1}.$$
Then, by $k(n,l) \ge n$, 
we have that 
\[ \nu_y \left(I_{k(n,l)}(x)\right)  \le \exp(-k(n,l) a) \le \textup{diam}(U_{n,l})^{a/\log (1/r)}  \]
for every $x \in \bigcap_{k \ge n} \{-\log \widetilde R_{y,k} \ge k a\}$.  
By this and the assumption of (ii), we have that 
\[ \nu_y \left( \pi^{-1}  (U_{n,l}) \cap \bigcap_{k \ge n} \{-\log \widetilde R_{y,k} \ge k a\}\right) \le D \textup{ diam}\left(U_{n,l}\right)^{a/\log (1/r)}.  \]

By this and \eqref{K-covered} and \eqref{sum-diam-delta}, we have that 
\[  \nu_y \left( \pi^{-1}  (K_1) \cap \bigcap_{k \ge n} \{-\log \widetilde R_{y,k} \ge k a\}\right) \le D \delta. \]

By this, \eqref{mu-nu-pi}  and the assumption, it holds that 
$\mu_y (K_1) = 0$.     
\end{proof} 

By Lemma \ref{any}, Proposition \ref{transfer}, and the assumption of (ii),   
\[ \dim_H \mu_y \ge  \frac{-c\log c - (1-c) \log(1-c)}{\log (1/r)} > 0,  \]
where $c$ is the constant in \eqref{wAy}.  
This completes the proof of Theorem \ref{main} (ii). 

\section{Examples}

This section is devoted to state various examples. 
All examples considered here satisfy that $(M,d)$ is the Euclid space $\mathbb{R}^d$ and  $\{S_i\}_i$ are contractive similitude on $\mathbb{R}^d$ satisfying the open set condition, and $Z = K$ and $F_i = S_i, i \in \Sigma_N$, where $K$ is the attractor of $\{S_i\}_i$.  
We recall that $\dim_H K = s$ for $s$ satisfying that 
\[ \sum_{i \in \Sigma_N} \textup{Lip}(S_i)^s = 1.\]
{\it Unless otherwise stated, we deal with the homogeneous case, i.e. all Lipschitz constants $r_i, i \in \Sigma_N$ are equal to each other and furthermore $\{S_i\}_i$ satisfies the open set condition.}  
In the final subsection, we deal with the {\it non-}homogeneous case.   

As we can see in \cite[Corollary 1.3]{Kig95} and its subsequent remarks, 
Theorem \ref{main} is applicable to the case that $(K, \{S_i\}_i)$ defines a self-similar set satisfying the open set condition such as the Sierp\'inski  gasket, the Sierp\'inski  carpet, the Koch curve. 
Theorem \ref{main} (ii) is applicable to the case that it is difficult to check whether $\{S_i\}_i$ satisfies the open set condition such as the L\'evy curve. 
See \cite[Appendix]{Kig95} for more details. 
Let $V_0$ be the set of fixed points of $S_i, i \in \Sigma_N$. 
Let $\displaystyle V_m := \cup_{i_1, \dots, i_m \in \Sigma_N} S_{i_1, \dots, i_m}(V_0)$.
Let $\displaystyle r_0 := \max_{i \in \Sigma_N} \textup{Lip}(S_i) \in (0,1)$.

\begin{Lem}
Assume that there exists $D > 1$ such that for every large $n$, 
\begin{equation}\label{fin-num} 
\sup_{x \in V_n} \left| V_n \cap B(x, 2 r_0^n \textup{diam}(K)) \right| \le D.  
\end{equation} 
Then, there exist two constants $c > 0$ and $D^{\prime} > 0$ such that for every $x \in K$ and $m \ge 1$, 
$$\left| \left\{ (i_1 \cdots i_m) \in (\Sigma_N)^{m}  \ \middle| \   B(x, c r_0^m) \cap  K_{i_1 \cdots i_m} \ne \emptyset \right\} \right| \le D^{\prime}.$$
\end{Lem}

\begin{proof}
Assume $\textup{diam}(U) \le  \textup{diam}(K) r_0^{n}$. 
Let $x, y \in U$. 
Then there exist points $x_n, y_n \in V_n$ such that $\max\{d(x, x_n), d(y, y_n)\} \le r_0^n \textup{diam}(K)$. 
Then $d(x_n, y_n) \le 2 r_0^n \textup{diam}(K)$. 
By the assumption, there are at most $D$ sets of forms $S_{i_1, \dots, i_n}(K)$ covering $U$. 
\end{proof}

\begin{Exa}\label{exa-cover}
We consider the Euclid metric. \\
(i) If $S_i (z) = (z+i)/N$, $z \in \R$, then, $K = [0,1]$, and, \eqref{fin-num} holds for $r_0 = 1/N$. \\
(ii) If $N = 4$, $S_i (z) = (z+q_i)/2$, $z \in \R^2$, where $$\{q_i\}_i = \{(x,y) \in \Z^2 \mid 0 \le x, y\le 1\},$$
then, $K = [0,1]^2$, and, \eqref{fin-num}  holds for $r = 1/2$.\\
(iii) If $N = 3$, $S_i (z) = (z+q_i)/2$, $z \in \R^2$, where $\{q_i\}_i$ forms an equilateral triangle, 
then, $K$ is a 2-dimensional  Sierp\'inski  gasket, and, \eqref{fin-num} holds for $r = 1/2$.\\
(iv) If $N = 8$, $S_i (z) = (z+q_i)/3$, $z \in \R^2$, where $$\{q_i\}_i = \{(x,y) \in \Z^2 \mid 0 \le x, y\le 2\} \setminus \{(1,1)\},$$
then,  $K$ is a 2-dimensional  Sierp\'inski  carpet, and, \eqref{fin-num} holds for $r = 1/3$.
\end{Exa}

\begin{proof}
Let $\displaystyle L := \left\{\sum_{i} a_i q_i : a_i \in \Z \right\}$.
This is a discrete subset of $[0,1]$ or $\R^2$.  
By the definition of $S_i$, 
it holds that for every $n$, 
$V_n \subset N^{-n} L$. 
Hence, $\displaystyle \sup_{x, y \in V_n, \ x \ne y} d(x,y) \ge c N^{-n}$, and, \eqref{fin-num} holds for some $D$.   
\end{proof} 

If $Y$ is a one-point set, then, \eqref{g-me} does not depend on $y$, so we drop the notation, 
and then, the solution $\mu$ of \eqref{g-me} is an invariant measure of the iterated function system $\left(K, \{S_i\}_{i \in \Sigma_N}\right)$ with weights $\{G_i\}_{i \in \Sigma_N}$.          

Hereafter, if $(G_0, \dots, G_{N-1}) = (p_0, \dots, p_{N-1})$, 
then, we call $\nu$ and $\mu$ the $(p_0, \dots, p_{N-1})$-{\it Bernoulli measure} on $Z = (\Sigma_N)^{\mathbb{N}}$ and the $(p_0, \dots, p_{N-1})$-{\it self-similar measure} on $Z = K$, respectively.    
We denote them by $\nu_{(p_0, \dots, p_{N-1})}$ and $\mu_{(p_0, \dots, p_{N-1})}$, respectively. 

\subsection{Singularity with respect to self-similar measures}

\begin{Prop}[Singularity with respect to self-similar measures]\label{singu-ss} 
Let $p_{0}, \dots, p_{N-1}$ be positive numbers satisfying that $\sum_{i \in \Sigma_N} p_i = 1$. 
Assume that for some $i_1, \dots, i_l$, 
\begin{align}\label{multi-sep-2}
Y(y) &\cap \bigcap_{i \in \Sigma_N} G_{i}^{-1}\left( \left[p_i - \epsilon_0,  p_i+ \epsilon_0\right]\right) \notag\\
&\cap \bigcap_{j \in \Sigma_N} \left(G_{j} \circ H_{i_{l}} \circ \cdots \circ H_{i_{1}}\right)^{-1} \left( \left[ p_j - \epsilon_0,  p_j + \epsilon_0\right]\right) = \emptyset. 
\end{align}
Then, (i) $\nu_y$ is singular with respect to $\nu_{(p_{0}, \dots, p_{N-1})}$.\\
(ii) If moreover $\pi^{-1}(\pi(A)) \setminus A$ is at most countable for every subset $A$ of $(\Sigma_N)^{\N}$, 
$\mu_y$ is singular with respect to $\mu_{(p_{0}, \dots, p_{N-1})}$.   
\end{Prop}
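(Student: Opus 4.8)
I will prove (i) first and deduce (ii) from it. Since all $p_i$ are positive and, by \eqref{mu-nu-pi}, $\mu_y = \nu_y\circ\pi^{-1}$ and $\mu_{(p_0,\dots,p_{N-1})} = \nu_{(p_0,\dots,p_{N-1})}\circ\pi^{-1}$, for (i) it is enough to show that $\nu_y$ is singular with respect to $\nu_p := \nu_{(p_0,\dots,p_{N-1})}$; here $p := (p_0,\dots,p_{N-1})\in P_N$. Shrinking $\epsilon_0$ only shrinks the left-hand side of \eqref{multi-sep-2}, so I may assume $\epsilon_0 < \min_j p_j$ and set $c_0 := \min_j p_j - \epsilon_0 > 0$. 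For $q\in P_N$ put $\rho(q) := \sum_{j\in\Sigma_N}\sqrt{p_j q_j}$; by Cauchy--Schwarz $\rho(q)\le 1$ with equality only at $q=p$, so compactness of $P_N$ and continuity of $\rho$ give $\delta_0 := 1 - \sup\{\rho(q) : q\in P_N,\ \|q-p\|_1\ge\epsilon_0\} > 0$, where $\|\cdot\|_1$ is the $\ell^1$-norm; note $\rho(q) > 1-\delta_0$ forces $\|q-p\|_1<\epsilon_0$, hence $q_j > p_j-\epsilon_0\ge c_0$ for all $j$. Now let $Z_n := \nu_p(I(X_1,\dots,X_n))/R_{y,n}$, which is $\nu_y$-a.s.\ well defined; it is a non-negative $\nu_y$-supermartingale (the density, on $\F_n$, of the absolutely continuous part of $\nu_p|_{\F_n}$ with respect to $\nu_y|_{\F_n}$), and converges $\nu_y$-a.s.\ to some $Z_\infty$ which is the density of the absolutely continuous part of $\nu_p$ with respect to $\nu_y$; thus it suffices to prove $Z_\infty = 0$ $\nu_y$-a.s. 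Using Lemma \ref{Lem-1} one computes $E^{\nu_y}[\sqrt{Z_n}\mid\F_{n-1}] = \sqrt{Z_{n-1}}\,\rho(p_{n-1}(y;x))$. Setting $\mathcal B_n := \{x : \rho(p_{n-1}(y;x)) \le 1-\delta_0\}\in\F_{n-1}$ and using $\rho(p_{n-1}(y;x)) \le 1 - \delta_0\mathbf 1_{\mathcal B_n}(x)$, one gets $E^{\nu_y}[\sqrt{Z_{n-1}}] - E^{\nu_y}[\sqrt{Z_n}] \ge \delta_0 E^{\nu_y}[\sqrt{Z_{n-1}}\mathbf 1_{\mathcal B_n}]$, and summing over $n$ (with $0\le E^{\nu_y}[\sqrt{Z_n}]\le 1$) yields $\sum_{n\ge1}E^{\nu_y}[\sqrt{Z_{n-1}}\mathbf 1_{\mathcal B_n}]\le\delta_0^{-1}<\infty$. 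Hence $\sum_n\sqrt{Z_{n-1}}\mathbf 1_{\mathcal B_n}<\infty$ $\nu_y$-a.s., so on $\{Z_\infty>0\}$, where $\sqrt{Z_{n-1}}$ stays bounded away from $0$, we have $\mathbf 1_{\mathcal B_n}=0$ for all large $n$: $\nu_y$-a.s.\ on $\{Z_\infty>0\}$ there is $K\ge1$ with $\|p_k(y;x)-p\|_1<\epsilon_0$, and in particular $G_j(h_k(y;x))>c_0$ for all $j$, for every $k\ge K$.

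I now run two opposing consequences of this on $\{Z_\infty>0\}$. The first is deterministic, from \eqref{multi-sep-2}: for $k\ge K$ we have $h_k(y;x)\in Y(y)$ and $G_i(h_k(y;x))\in[p_i-\epsilon_0,p_i+\epsilon_0]$ for all $i$, so \eqref{multi-sep-2} produces some $j$ with $\bigl|G_j\bigl(H_{i_l}\circ\cdots\circ H_{i_1}(h_k(y;x))\bigr)-p_j\bigr|>\epsilon_0$; consequently the block $i_1\cdots i_l$ cannot occur in $x$ at positions $k+1,\dots,k+l$ whenever $k\ge K$ (otherwise $h_{k+l}(y;x)=H_{i_l}\circ\cdots\circ H_{i_1}(h_k(y;x))$, forcing $\|p_{k+l}(y;x)-p\|_1>\epsilon_0$ with $k+l\ge K$, a contradiction). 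Hence, $\nu_y$-a.s.\ on $\{Z_\infty>0\}$, the block $i_1\cdots i_l$ occurs only finitely often in $x$.

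The second consequence is probabilistic. Since $G_j(h_k(y;x))>c_0$ for all $j$ and all $k\ge K$, the $\nu_y$-conditional probability that $X_{(m-1)l+s}=i_s$ for $1\le s\le l$, given $\F_{(m-1)l}$, equals $\prod_{s=1}^{l}G_{i_s}\bigl(h_{(m-1)l+s-1}(y;x)\bigr)\ge c_0^{\,l}$ for all large $m$; by L\'evy's conditional Borel--Cantelli lemma applied to this block of coordinates, $\nu_y$-a.s.\ on $\{Z_\infty>0\}$ the block $i_1\cdots i_l$ occurs at infinitely many of the windows $\{(m-1)l+1,\dots,ml\}$. This contradicts the previous paragraph, so $\nu_y(\{Z_\infty>0\})=0$; thus $Z_\infty=0$ $\nu_y$-a.s., $\nu_y$ is singular with respect to $\nu_p$, and (i) follows. (I expect this to be the main obstacle: \eqref{multi-sep-2} only controls the pair $(z,H_{i_l}\circ\cdots\circ H_{i_1}z)$, information visible along a trajectory only where that trajectory actually spells out $i_1\cdots i_l$, so one must at the same time force the block to recur along $\nu_y$-typical paths; since (A-y) — which makes this automatic earlier in the paper — is not assumed here, the role it played is taken over by the supermartingale/telescoping estimate for $\sqrt{Z_n}$, which localizes to $\{Z_\infty>0\}$ and restores a uniform lower bound $c_0$ on the transition weights, and then by L\'evy's conditional Borel--Cantelli lemma.)

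For (ii), use (i) to fix a Borel set $A\subseteq(\Sigma_N)^{\N}$ with $\nu_y(A)=1$ and $\nu_p(A)=0$, and put $\widetilde A:=\pi^{-1}(\pi(A))$. By hypothesis $\widetilde A\setminus A$ is at most countable, so $\widetilde A$ is Borel; since every $p_i$ lies in $(0,1)$, $\nu_p$ is non-atomic, whence $\nu_p(\widetilde A)=\nu_p(A)+\nu_p(\widetilde A\setminus A)=0$, while $\nu_y(\widetilde A)\ge\nu_y(A)=1$. As $\widetilde A$ is $\pi$-saturated, $\pi^{-1}(\pi(\widetilde A))=\widetilde A$, so the analytic (hence universally measurable) set $B_0:=\pi(\widetilde A)\subseteq K$ satisfies, by \eqref{mu-nu-pi}, $\mu_y(B_0)=\nu_y(\widetilde A)=1$ and $\mu_{(p_0,\dots,p_{N-1})}(B_0)=\nu_p(\widetilde A)=0$; replacing $B_0$ by a Borel superset of $\mu_{(p_0,\dots,p_{N-1})}$-measure zero via outer regularity if a Borel witnessing set is desired, we conclude that $\mu_y$ is singular with respect to $\mu_{(p_0,\dots,p_{N-1})}$.
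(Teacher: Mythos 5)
Your first half is correct: the nonnegative $\nu_y$-supermartingale $Z_n$, the Hellinger identity $E^{\nu_y}[\sqrt{Z_n}\mid\F_{n-1}]\le\sqrt{Z_{n-1}}\,\rho(p_{n-1}(y;x))$, and the telescoping bound showing that $\nu_y$-a.s.\ on $\{Z_\infty>0\}$ one eventually has $\|p_k(y;x)-p\|_1<\epsilon_0$ are all sound, and together they are a self-contained substitute for the paper's appeal to \cite[Theorem 4.1]{Hi04}; your part (ii) also matches the paper. The genuine gap is in the conditional Borel--Cantelli step. The quantity $\prod_{s=1}^{l}G_{i_s}(h_{(m-1)l+s-1}(y;x))$ is not $\F_{(m-1)l}$-measurable (for $s\ge2$ it depends on $X_{(m-1)l+1}(x),\dots$), so it is not the conditional probability of the block; the correct expression is $\prod_{s=1}^{l}G_{i_s}\bigl(H_{i_{s-1}}\circ\cdots\circ H_{i_1}(h_{(m-1)l}(y;x))\bigr)$, which evaluates the $G_{i_s}$ at \emph{off-trajectory} points. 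Your supermartingale argument controls $G_j(h_k(y;x))$ only along the realized trajectory; it gives no lower bound at $H_{i_{s-1}}\circ\cdots\circ H_{i_1}(h_k(y;x))$ unless the trajectory actually spells $i_1\cdots i_{s-1}$ there, and since (A-y) is not assumed some $G_{i_s}$ may even vanish at such a point. So the bound $\ge c_0^{\,l}$ is unjustified and the contradiction does not follow as written (for $l=1$ your argument is fine; the problem is $l\ge2$).

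There are two ways to close the gap. The paper's route sidesteps the issue entirely by running block-recurrence under the i.i.d.\ measure $\nu_p:=\nu_{(p_0,\dots,p_{N-1})}$, where it is free, concluding that $\|p_i(y;x)-p\|\ge\epsilon_0$ infinitely often $\nu_p$-a.s.\ and then citing \cite[Theorem 4.1]{Hi04}; you can graft this onto your setup by noting that the set $E:=\{x:\exists K,\ \forall k\ge K,\ \|p_k(y;x)-p\|_1<\epsilon_0\}$ carries the measure $Z_\infty\,d\nu_y$ (the absolutely continuous part of $\nu_p$ with respect to $\nu_y$), that this measure is dominated by $\nu_p$, and that $\nu_p(E)=0$ by block recurrence plus \eqref{multi-sep-2}; hence $Z_\infty=0$ $\nu_y$-a.s. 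Alternatively you can stay under $\nu_y$ with a stopping-time argument: let $S:=\bigcap_j G_j^{-1}((p_j-\epsilon_0,p_j+\epsilon_0))$ and let $\tau$ be the first $k\ge K$ with $h_k(y;x)\notin S$; on $\{\tau>k\}$ put $w_0:=h_k(y;x)$, $w_s:=H_{i_s}(w_{s-1})$ and $s^*:=\min\{s\ge1:w_s\notin S\}$, which satisfies $s^*\le l$ by \eqref{multi-sep-2}; the event $\{X_{k+1}=i_1,\dots,X_{k+s^*}=i_{s^*}\}$ has $\F_k$-conditional probability $\prod_{s\le s^*}G_{i_s}(w_{s-1})\ge c_0^{\,l}$ (all the points $w_{s-1}$ with $s\le s^*$ lie in $S$) and forces $\tau\le k+l$; iterating gives $\nu_y(\tau=\infty)=0$ and hence $\nu_y(Z_\infty>0)=0$. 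Either repair works; as submitted, the proof is incomplete.
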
 

\begin{proof} 
By Azuma's inequality, 
$\nu_{p_0, \dots, p_{N-1}}$-a.s.$x$, 
there are infinitely many $i$ such that 
$X_{i + k} (x) = i_k$ for every $1 \le k \le l$.   
By \eqref{multi-sep-2}, 
$\nu_{p_0, \dots, p_{N-1}}$-a.s.$x$, 
there are infinitely many $i$ such that 
\[ \left\| p_i (y; x) - (p_0, \dots, p_{N-1}) \right\| \ge \epsilon_0. \]

Now assertion (i) follows from this and \cite[Theorem 4.1]{Hi04}.   
Assertion (ii) follows from assertion (i), \eqref{mu-nu-pi} and the assumption. 
\end{proof}

\subsection{Energy measures on Sierpi\'nski gaskets} 

\cite{Ku89} shows that energy measures for canonical Dirichlet forms on Sierpi\'nski gaskets are singular with respect to the Hausdorff measure on them.   
It is generalized by \cite{BST99}, \cite{Hi04}. 
Recently, \cite{JOP17} considers the Kusuoka measure from an ergodic theoretic viewpoint.  
Their framework covers a general class of measures that can be defined by products of matrices.

Let $V_0 := \{q_0, q_1, q_2\}$ be the set of vertices of an equilateral triangle in $\R^2$.  
Let $K$ be a 2-dimensional Sierpi\'nski gasket,   
that is, the attractor of $K = \cup_{i = 0,1,2} S_i (K)$,
where we let $S_i (z) := (z + q_i)/2, z \in \R^2$.    
$V_0$ is the set of fixed points of $S_i, i = 0,1,2$. 

Let $a_i \in K$, $i = 0,1,2$, be unique fixed points of $F_i$, $i = 0,1,2$, respectively.    
Let 
\[ A_0 := \begin{pmatrix} 3/5 & 0 \\ 0 & 1/5 \end{pmatrix},  
A_1 := \begin{pmatrix} 3/10 & \sqrt{3}/10 \\ \sqrt{3}/10  & 1/2 \end{pmatrix},  
A_2 := \begin{pmatrix} 3/10 & -\sqrt{3}/10 \\ -\sqrt{3}/10  & 1/2 \end{pmatrix}. \]
They are regular matrices and define linear transformation of $Y$. 

Let $\| \cdot \|$ be the Euclid norm on $\R^2$. 
Let 
\[ Y := S^1 = \{x \in \R^2 : \| x \| = 1\}.\]  
We regard $Y$ as a topological space with respect to the Euclid distance on $Y \subset \R^2$.
For $y \in Y$ and $i = 0,1,2$, let 
\[ G_i (y) := \frac{5}{3} \|A_i y\|^2, \textup{ and, }  H_i (y) := \frac{A_i y}{\| A_i y \|}. \]

\begin{Lem}
(i) \[ A_0^2 + A_1^2 + A_2^2 = \frac{3}{5} I_2,\] 
where $I_2$ denotes the identity matrix.  
In particular, \eqref{G-sum-1} holds.\\ 
(ii) For every $y$,  
\begin{equation*} 
0 < \inf_{i \in \Sigma_N, z \in Y(y)}  G_i (z) \le \sup_{i \in \Sigma_N, z \in Y(y)}  G_i (z) < 1. 
\end{equation*}
\end{Lem} 

\begin{proof}
(i) is immediately seen.  

(ii) The set of eigenvalues of $A_0, A_1, A_2$ are $\{1/5, 3/5\}$. 
Hence, for every $i$ and $y$, 
\[ \frac{1}{15} \le G_i (y) \le \frac{3}{5}.\] 
\end{proof}

\begin{Lem}
(i) If $G_0 (y) = G_0 \circ H_0 (y)$, then, $y \in \{(\pm 1, 0), (0, \pm 1)\}$ and furthermore 
\[ G_0 (y) = G_0 \circ H_0 (y) \in \left\{\frac{1}{15}, \frac{3}{5}\right\}. \]   
In particular, if $G_0 (y) = 1/3$, then, \[ G_0 \circ H_0 (y) \ne \frac{1}{3}. \]
(ii) For every $y \in Y$,  
there exist $\epsilon_0 \in (0, 1/N)$, $l \ge 1$ and $i_1, \dots, i_{l} \in  \Sigma_N$ such that \eqref{multi-sep} holds. 
\end{Lem}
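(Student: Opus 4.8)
The plan is to parametrise $y = (y_1,y_2) \in S^1$ by the single real number $t := y_1^2 \in [0,1]$ and to express both $G_0(y)$ and $G_0 \circ H_0(y)$ as rational functions of $t$. Since $A_0$ is symmetric with eigenvalues $3/5$ and $1/5$, one gets $\|A_0 y\|^2 = (9y_1^2 + y_2^2)/25 = (8t+1)/25$ using $y_1^2 + y_2^2 = 1$, hence $G_0(y) = \frac{5}{3}\|A_0 y\|^2 = \frac{8t+1}{15}$. Likewise $G_0(H_0(y)) = \frac{5}{3}\|A_0 H_0(y)\|^2 = \frac{5}{3}\,\|A_0^2 y\|^2/\|A_0 y\|^2$, and since $A_0^2$ has eigenvalues $9/25$ and $1/25$ this becomes $\frac{5}{3}\cdot\frac{(81y_1^2+y_2^2)/625}{(9y_1^2+y_2^2)/25} = \frac{80t+1}{15(8t+1)}$ (note $8t+1 > 0$ throughout).

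For (i) I would then solve $G_0(y) = G_0(H_0(y))$. Multiplying through by $15(8t+1)$ turns it into $(8t+1)^2 = 80t+1$, i.e. $64t^2 - 64t = 0$, so $t \in \{0,1\}$. Reading this back on $S^1$: $t = 0$ corresponds to $y = (0,\pm 1)$ with $G_0(y) = 1/15$, and $t = 1$ to $y = (\pm 1,0)$ with $G_0(y) = 9/15 = 3/5$; this gives the claimed list of points and values. The ``in particular'' statement is then immediate: $G_0(y) = 1/3$ forces $8t+1 = 5$, i.e. $t = 1/2$, which is neither $0$ nor $1$, so $G_0(H_0(y)) \neq G_0(y) = 1/3$.

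For (ii) I would use the definition of (B-y) with $N = 3$, $l = 1$ and $i_1 = 0$. It is enough to exhibit $\epsilon_0 \in (0,1/3)$ for which no $z \in S^1$ simultaneously satisfies $G_0(z) \in [1/3 - \epsilon_0, 1/3 + \epsilon_0]$ and $G_0(H_0(z)) \in [1/3 - \epsilon_0, 1/3 + \epsilon_0]$: indeed the set appearing in \eqref{multi-sep} is then contained in the (empty) set of such $z$, regardless of $y$ and of $Y(y)$, since $Y(y) \subset S^1$. Consider $\phi(z) := \max\{|G_0(z) - 1/3|,\, |G_0(H_0(z)) - 1/3|\}$, which is continuous on the compact set $S^1$ (the map $H_0$ is continuous there because $A_0$ is invertible). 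By part (i), $\phi$ never vanishes, so $m := \min_{S^1}\phi > 0$, and $\epsilon_0 := \min\{m,1/3\}/2$ works. The only genuine computation is the algebra in part (i), which is routine once one passes to the variable $t$, so I expect no real obstacle; one could alternatively make (ii) quantitative by noting $G_0(H_0(z)) \to 41/75$ as $G_0(z) \to 1/3$ and choosing an explicit $\epsilon_0$, but the compactness argument via part (i) is shorter to state.
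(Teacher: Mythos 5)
Your computation is correct and your argument is exactly the one the paper intends: part (i) by direct algebra (the paper simply says it ``is easy to see''), and part (ii) from (i) together with continuity of $G_0, H_0$ and compactness of $S^1$, which is precisely the paper's one-line justification. You have merely filled in the details the paper omits, so this is the same proof.
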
  

\begin{proof}
Assertion (i) is easy to see. 
By using assertion (i) and that fact that $G_i$ and $H_i$ are continuous on $Y$ and $Y$ is compact, assertion (ii) follows. 
\end{proof} 

\begin{Lem}
Assume $G_i (y) = p_i \in (0,1)$, $i = 0,1,2$. 
Then,\\
(i) If $y \notin \{(\pm 1, 0), (0, \pm 1)\}$,  
then, 
\[ G_0 (H_0 (y)) \ne p_0. \] 
(ii)  If $y \in \{(\pm 1, 0), (0, \pm 1)\}$,  
then, 
\[ G_0 (H_1 (y)) \ne p_0.\]   
\end{Lem}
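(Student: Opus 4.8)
The plan is to treat the two parts separately; each reduces to a short explicit computation with the matrices $A_0$ and $A_1$ together with the constraint $\|y\|=1$. Throughout, the hypothesis $G_i(y)=p_i$ is used only to record that $p_0 = G_0(y) = \tfrac{5}{3}\|A_0 y\|^2$.

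For part (i), substituting $H_0(y) = A_0 y/\|A_0 y\|$ into the definition of $G_0$ gives
\[ G_0\bigl(H_0(y)\bigr) = \frac{5}{3}\,\frac{\|A_0^{2} y\|^{2}}{\|A_0 y\|^{2}}, \]
so, since $p_0 = \tfrac{5}{3}\|A_0 y\|^{2}$, the equation $G_0(H_0(y)) = p_0$ is equivalent to $\|A_0^{2} y\|^{2} = \|A_0 y\|^{4}$. Writing $y = (y_1, y_2)$ with $y_1^{2}+y_2^{2}=1$ and using that $A_0 = \mathrm{diag}(3/5, 1/5)$, both sides become polynomials in $y_1^{2}$ and the equation collapses to $y_1^{2}(1-y_1^{2}) = 0$, that is, $y \in \{(\pm1,0),(0,\pm1)\}$; the contrapositive is precisely (i). In fact (i) is nothing but the contrapositive of the first assertion of the preceding lemma (read with $p_0 = G_0(y)$), so one may simply cite that lemma.

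For part (ii) I would first use the preceding lemmas to observe that at a point $y \in \{(\pm1,0),(0,\pm1)\}$ one has $G_0(y) \in \{1/15, 3/5\}$, and a one-line evaluation fixes $p_0 = G_0(y) = 3/5$ when $y = (\pm1,0)$ and $p_0 = 1/15$ when $y = (0,\pm1)$. It then remains to compute
\[ G_0\bigl(H_1(y)\bigr) = \frac{5}{3}\,\frac{\|A_0 A_1 y\|^{2}}{\|A_1 y\|^{2}} \]
at these four points and compare with $p_0$. Since $G_0$ is even and $H_1(-y) = -H_1(y)$, only $y = (1,0)$ and $y = (0,1)$ need to be checked, and the values come out to $G_0(H_1((1,0))) = 7/15 \neq 3/5$ and $G_0(H_1((0,1))) = 13/105 \neq 1/15$, which is the claim.

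The only points that require a moment's care are matching the correct value of $p_0$ to each of the four exceptional unit vectors in part (ii) and noting the sign symmetry, so that only two of them need an explicit computation; otherwise the argument is entirely routine. I also note that (i) and (ii) together are exactly what is needed to verify the hypothesis \eqref{multi-sep-2} of Proposition \ref{singu-ss} (taking $l=1$, and $i_1=0$ or $i_1=1$ according as $p_0 \notin \{1/15,3/5\}$ or not) via the continuity of $G_j,H_j$ and the compactness of $S^1$, and hence to obtain singularity of the energy measure with respect to an arbitrary self-similar measure on $K$.
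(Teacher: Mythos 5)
Your proposal is correct: the paper states this lemma without any proof, and your direct computation with the explicit matrices $A_0$, $A_1$ is exactly the verification the author leaves to the reader. I checked the numbers: $\|A_0^2y\|^2=\|A_0y\|^4$ does reduce to $y_1^2(1-y_1^2)=0$ on $S^1$ (so (i) is indeed the contrapositive of the preceding lemma's part (i)), and the values $G_0(H_1((1,0)))=7/15\ne 3/5$ and $G_0(H_1((0,1)))=13/105\ne 1/15$, together with the parity observation $G_0(-z)=G_0(z)$ and $H_1(-y)=-H_1(y)$, settle (ii).
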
 

Now we can apply Theorem \ref{main} and Proposition \ref{singu-ss} to this case, ($N = 3, l = 2$) 

\begin{Prop}
It holds that 
\begin{equation}\label{KEM-singular} 
0 < \dim_H \mu_{y} < \frac{\log 3}{\log 2}. 
\end{equation}  
Furthermore, $\mu_y$ is singular with respect to every $(p_0, p_{1}, p_{2})$-self-similar measure on $K$. 
\end{Prop}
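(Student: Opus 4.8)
The plan is to read the two dimension bounds directly off Theorem \ref{main} and to obtain the singularity assertion from Proposition \ref{singu-ss}. By Example \ref{exa-cover}(iii), Assumption \ref{Kigami} holds for $(K,\{f_i\}_{i=0,1,2})$ with $r=1/2$, so $\log N/\log(1/r)=\log 3/\log 2$; the lemmas above also record that \eqref{G-sum-1} holds, that (A-y) holds for every $y\in Y$, and that (B-y) holds for every $y\in Y$. From (A-y) one gets (wA-y) for free (take any $i$ and $c=\min\{\inf_{z\in Y(y)}G_i(z),\,1-\sup_{z\in Y(y)}G_i(z)\}>0$). Feeding (A-y) and (B-y) into Theorem \ref{main}(i) gives $\dim_H\mu_y<\log 3/\log 2$, and feeding (wA-y) into Theorem \ref{main}(ii) gives $\dim_H\mu_y>0$.

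For the singularity statement, fix positive reals $p_0,p_1,p_2$ with $p_0+p_1+p_2=1$; it suffices to verify hypothesis \eqref{multi-sep-2} of Proposition \ref{singu-ss} for some word $(i_1,\dots,i_l)$ and then quote that proposition. Set $E:=\{z\in S^1:G_i(z)=p_i,\ i=0,1,2\}$, a compact subset of $S^1$. If $E=\emptyset$, continuity of the $G_i$ on the compact space $S^1$ gives $\inf_{z\in S^1}\max_i|G_i(z)-p_i|>0$, so \eqref{multi-sep-2} holds for any word once $\epsilon_0$ is small. If $E\ne\emptyset$, the last lemma above attaches to each $z\in E$ a first letter depending only on whether $z$ lies in $S:=\{(\pm1,0),(0,\pm1)\}$, namely $G_0(H_0(z))\ne p_0$ when $z\in E\setminus S$ and $G_0(H_1(z))\ne p_0$ when $z\in E\cap S$. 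The key observation is that $E$ is never a genuine mixture of the two types: writing $z=(\cos\theta,\sin\theta)$ one has $G_0(z)=(1+8\cos^2\theta)/15$, while $(G_0,G_1,G_2)$ takes the value $(3/5,1/5,1/5)$ at $(\pm1,0)$ and $(1/15,7/15,7/15)$ at $(0,\pm1)$, so $E\cap S\ne\emptyset$ forces $(p_0,p_1,p_2)$ to be one of these two triples and then $G_0(z)=p_0$ already pins $\cos^2\theta\in\{0,1\}$, i.e.\ $E\subset S$. Thus either $E\cap S=\emptyset$ and $G_0(H_0(z))\ne p_0$ for every $z\in E$, or $E\subset S$ and $G_0(H_1(z))\ne p_0$ for every $z\in E$; take $i_1=0$ in the first case and $i_1=1$ in the second. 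In either case the continuous function $z\mapsto\bigl(G_0(z)-p_0,\,G_1(z)-p_1,\,G_2(z)-p_2,\,G_0(H_{i_1}(z))-p_0\bigr)$ vanishes nowhere on $S^1$ (it is non-zero on $E$ by the above, and its first three entries are bounded away from $0$ off a neighbourhood of $E$), so for some $\epsilon_0>0$ the intersection in \eqref{multi-sep-2} is empty over all of $S^1$, a fortiori over $Y(y)\subset S^1$. This is exactly \eqref{multi-sep-2} (a single letter $i_1$ suffices). Proposition \ref{singu-ss}(i) then yields that $\nu_y$ is singular with respect to $\nu_{(p_0,p_1,p_2)}$, and part (ii) upgrades this to $\mu_y$ versus $\mu_{(p_0,p_1,p_2)}$, since for the natural projection $\pi$ of the Sierpi\'nski gasket the set $\pi^{-1}(\pi(A))\setminus A$ is always contained in the countable set of eventually-constant sequences.

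The only genuinely delicate point is the pair of exceptional parameters $(3/5,1/5,1/5)$ and $(1/15,7/15,7/15)$: for these $E$ lies exactly on $S$, which is precisely the fixed-point set of $H_0$, so $G_0\circ H_0$ carries no information about $z$ and one is forced onto $H_1$ --- exactly what part (ii) of the last lemma supplies. Everything else is the routine compactness argument that converts the strict inequalities of the lemmas above into the $\epsilon_0$-thickened emptiness demanded by \eqref{multi-sep-2}; here it matters that $Y=S^1$ is compact, even though $Y(y)$ itself need not be closed.
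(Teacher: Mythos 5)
Your argument is correct, and at the level of strategy it coincides with the paper's: the paper disposes of this proposition in a single line (``apply Theorem \ref{main} and Proposition \ref{singu-ss}, $N=3$, $l=2$''), leaving all verification to the three preceding lemmas. What you add, and what the paper never makes explicit, is the dichotomy on the level set $E=\{z\in S^1 : G_i(z)=p_i,\ i=0,1,2\}$: since $G_0(z)=(1+8\cos^2\theta)/15$ for $z=(\cos\theta,\sin\theta)$ takes the values $3/5$ and $1/15$ only on $S=\{(\pm1,0),(0,\pm1)\}$, the set $E$ is either disjoint from $S$ or contained in $S$, never a mixture of the two types. This is exactly what lets you run the compactness argument with a \emph{single} letter $i_1\in\{0,1\}$ chosen according to the case, i.e.\ with $l=1$ in \eqref{multi-sep-2}, whereas the paper's parenthetical ``$l=2$'' suggests the author expected to need a two-letter word; note that a longer word would not obviously repair a mixed $E$ anyway, since \eqref{multi-sep-2} only constrains $G_j$ at the endpoint $H_{i_l}\circ\cdots\circ H_{i_1}(z)$ and not at intermediate points, so your observation is genuinely load-bearing rather than cosmetic. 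The remaining details --- deducing (wA-y) from (A-y), running the compactness argument on all of the compact set $S^1$ rather than on the non-closed orbit $Y(y)$, and checking that $\pi^{-1}(\pi(A))\setminus A$ consists of eventually-constant sequences for the gasket's natural projection so that Proposition \ref{singu-ss}(ii) applies --- are all handled correctly.
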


\begin{Rem}
This result could be deduced from the dimension theory of dynamical systems. 
By \cite[Remark 1.6, Corollary 2.12, and Example 1]{Ku89}, 
we have that on the symbolic space $(\Sigma_N)^{\mathbb{N}}$ the Kusuoka measure is invariant and ergodic with respect to the canonical shift.  
See \cite[Theorem 6.8]{Kaj12} for a simple proof. 
By the uniqueness of measures of maximal dimension stated in \cite[Theorem 4.4.7]{MU03} and mutual singularity of two distinct ergodic measures (This is an  easy consequence of the Birkhoff ergodic theorem. See \cite[Theorem 2.2.6]{PU10} for example), 
we have the upper bound of \eqref{KEM-singular}.  
\cite{JOP17} investigates the Kusuoka measure from an ergodic theoretic viewpoint, which could be seen as a generalization of Bernoulli measures.
\end{Rem}

Let $f$ be a harmonic function on $K$. 
Let $h_1$ and $h_2$ be the harmonic functions on $K$ such that 
\[ (h_1(q_0), h_1(q_1), h_1(q_2)) =  (0, \sqrt{2}, \sqrt{2}) \]
 and \[ (h_2(q_0), h_2(q_1), h_2(q_2)) =  (0, \sqrt{2/3}, -\sqrt{2/3}).\]   
Let $v$ be  the components of $f$ in $(h_1, h_2)$.   
Let $y = v/\|v\|$. 
Then, the energy measure associated with $f$ is $\mu_y$. (Cf. \cite{BST99}.)  

\begin{Rem}
In a formal level, the framework adopted in \cite{Hi04} is interpreted as follows.    
See \cite[Section 2]{Hi04} for details of Dirichlet forms.  
Let $(K, \Sigma_N, \{\psi_i\}_{i \in \Sigma_N})$ be a self-similar structure and $\mu$ be the invariant measure.  
Let $(\mathcal{E}, \mathcal{F})$ be a regular Dirichlet form on $L^2 (K, \mu)$. 
Assume (A1)-(A6) in  \cite[Section 2]{Hi04}.   
\[ Y := \mathcal{F}, \]
\[ G_i (f) :=  s_i \frac{\mathcal{E}(f \circ \psi_i, f \circ \psi_i)}{\mathcal{E}(f, f)}, \ \textup{ if } \mathcal{E}(f, f) > 0.  \]
\[ G_i (f) :=  \frac{1}{N}, \ \textup{ if } \mathcal{E}(f, f) = 0.  \]
\[ H_i (f) := f \circ \psi_i.  \]
Then, for $f \in Y$, $\mu_f$ is the normalized energy measure.  
\end{Rem} 

\subsection{Restriction of harmonic function on Sierpi\'nski gasket}  

\cite{ADF14} considers  the restriction on $[0,1]$ of harmonic functions on the Sierpi\'nski gasket, 
and shows that they are singular functions\footnote{A singular function is a continuous, increasing function on $[0,1]$ whose derivative is zero almost surely with respect to the Lebesgue measure.} whenever they are monotone. 
The restrictions are among a wider class of functions containing several functions such as Lebesgue singular functions.  

\cite[Notation 6]{ADF14} is interpreted as follows in our framework: 
Let 
\begin{equation}\label{def-ADF-Y} 
Y = [0,1], 
\end{equation}
\begin{equation}\label{def-ADF-G}  
G_0 (y) = \frac{2+y}{5}, \ \ G_1 (y) = 1 - G_0 (y) = \frac{3-y}{5}, 
\end{equation}  
and 
\begin{equation}\label{def-ADF-H}  
H_0 (y) = \frac{1+2y}{2+y}, \ \ H_1 (y) = \frac{y}{3-y}. 
\end{equation} 
Then, $S_y$ in \cite[Subsection 2.1]{ADF14} is the distribution function of $\mu_y$.  
\cite[Theorem 3]{ADF14} is their main theorem, and,  
\cite[Lemma 24 and Theorem 25]{ADF14} restates it in a more general framework.       

This is {\it not} of de Rham type in the subsection below, even if we exchange $H_0$ with $H_1$.  
Theorem \ref{main} (i) gives the following improvements for \cite[Theorem 25]{ADF14}. 
\eqref{A-y} in Remark \ref{A-B-y}  corresponds to \cite[assumption (a) in Lemma 24]{ADF14}. 
The assumption that  \eqref{multi-sep} holds for $\epsilon_0 \in \left(0, (\min_i r_i)^s\right)$, $l \ge 1$ in Theorem \ref{main} (i) corresponds to \cite[assumption (b) in Lemma 24]{ADF14}. 

We loosen the assumptions in two ways and simultaneously obtain a stronger conclusion. 
The first way for weakening the assumption is adopting the assumption that \eqref{multi-sep} holds for some $\epsilon_0 \in (0, (\min_i r_i)^s)$, $l \ge 1$ and $i_1, \dots, i_{l} \in  \Sigma_N$, which is strictly weaker than \cite[assumption (b) in Lemma 24]{ADF14}. 
The second way is the case that \eqref{A-y} fails but \eqref{multi-sep} holds for $\epsilon_0 \in \left(0, (\min_i r_i)^s\right)$, $l \ge 1$. 

\begin{Thm}[Application to singularity for real functions]
Let $Y = Z = [0,1]$ and $S_i (z) = (z+i)/N, z \in [0,1], i \in \Sigma_N$.   
Let $y \in [0,1]$. 
Assume $\mu_y$ has no atoms.  
Let $\varphi_y$ be the distribution function of $\mu_y$. 
Then,\\
(i) If \eqref{A-y} holds and  \eqref{multi-sep} holds for some $\epsilon_0 \in (0, 1/N)$, $l \ge 1$ and $i_1, \dots, i_{l} \in  \Sigma_N$, then, $\dim_H \mu_y < 1$.\\
(ii) If \eqref{multi-sep} holds for some $\epsilon_0 \in (0, 1/N)$, $l \ge 1$ and $i_1, \dots, i_{l} \in  \Sigma_N$, 
then, $\varphi_y$ does not have non-zero derivative at almost every point with respect to every $(p_0, \dots, p_{N-1})$-self-similar measure.\\
(iii) If \eqref{multi-sep} holds for $\epsilon_0 \in \left(0, 1/N\right)$, $l \ge 1$, 
then, $\varphi_y$ does not have non-zero derivative at every point, and, $\dim_H \mu_y < 1$. 
\end{Thm}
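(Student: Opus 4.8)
The plan is to obtain each item as a direct specialization of the general machinery already built up, using the fact that here $N$ is arbitrary, $Z = [0,1]$ with $f_i(z) = (z+i)/N$, and by Example~\ref{exa-cover}(i) Assumption~\ref{Kigami} holds with $r = 1/N$, so that $\log N/\log(1/r) = 1$. For item (i), I would simply invoke Theorem~\ref{main}(i): under (A-y) and (B-y) the strict inequality \eqref{strict} reads $\dim_H \mu_y < \log N/\log(1/r) = 1$, which is the claim. For item (iii), the strict-dimension half is identical: (sB-y) alone triggers Theorem~\ref{main}(i), giving $\dim_H \mu_y < 1$ again.

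The substance is in the ``no non-zero derivative'' statements of (ii) and (iii), which I would deduce from the lower-bound analysis in the proof of Theorem~\ref{main}(ii) together with the singularity result Proposition~\ref{singu-ss}. First note that (B-y) is exactly \eqref{multi-sep}, i.e.\ \eqref{multi-sep-2} in the case $p_i = 1/N$ for all $i$, so Proposition~\ref{singu-ss}(i) applies with the uniform weights and, since $\mu_y$ has no atoms, the hypothesis in Proposition~\ref{singu-ss}(ii) about $\pi^{-1}(\pi(A)) \setminus A$ being countable holds for the maps $f_i(z) = (z+i)/N$ (the fibers of $\pi$ are singletons except at the countably many $N$-adic rationals); hence $\mu_y$ is singular with respect to the $(1/N,\dots,1/N)$-self-similar measure, which is Lebesgue measure on $[0,1]$. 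More generally, for (ii) I want the conclusion against \emph{every} $(p_0,\dots,p_{N-1})$-self-similar measure; but (B-y) only gives \eqref{multi-sep-2} for the uniform weights, so to handle a general $(p_i)$ I would argue that it suffices to compare $\mu_y$ with the $(p_i)$-self-similar measure $\lambda := \mu_{(p_0,\dots,p_{N-1})}$ directly via the dimension/density dichotomy: by Lemma~\ref{any} applied with the $\liminf$ from Proposition~\ref{transfer}, $\mu_y$ gives zero mass to every Borel set of Hausdorff dimension below the a.s.\ lower entropy rate, while the standard density-theorem argument shows that if $\varphi_y$ had a finite non-zero derivative on a set of positive $\lambda$-measure then $\mu_y$ and $\lambda$ would be comparable there; combining with the strict dimension bound of Theorem~\ref{main}(i) (valid once (B-y) holds, as in (i)) forces the derivative to vanish $\lambda$-a.e.

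For item (iii), the ``at every point'' strengthening comes from upgrading (B-y) to (sB-y): under (sB-y), Lemma~\ref{key} holds \emph{unconditionally} (without the constraint $X_{i+k}(x) = i_k$), so the bound $\sum_{k=1}^l s_N(p_k(y;x)) < (l-1)\log N + \text{(something} < \log N)$ holds along \emph{every} block of length $l$ in \emph{every} sequence $x$, not just $\nu_y$-a.s.\ Translating back through the relation between $-\log R_{y,n}$ and the cylinder lengths $N^{-n}$, for every $x$ one gets $\liminf_n \frac{-\log R_{y,n}(x)}{n\log N} < 1$, i.e.\ at every point $t = \pi(x) \in [0,1]$ the upper Dini derivative of $\varphi_y$ is not a finite non-zero number; combined with monotonicity (so the lower Dini derivative is $\ge 0$) and the standard fact that a finite derivative $\varphi_y'(t) = L \in (0,\infty)$ would force $\frac{-\log R_{y,n}(x)}{n\log N} \to 1$, no point can have a non-zero derivative. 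I would spell out this Dini-derivative comparison carefully since it is the one genuinely new piece.

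The main obstacle, I expect, is precisely the pointwise statement in (iii): converting the uniform entropy bound of Lemma~\ref{key} into a clean statement about Dini derivatives of $\varphi_y$ requires relating the measure $\mu_y([\pi(x)|_n])$ of the $n$-th $N$-adic interval containing $t$ (which is essentially $R_{y,n}(x)$) to difference quotients $\frac{\varphi_y(t+h) - \varphi_y(t)}{h}$ for general $h$, not just $h = N^{-n}$; the $N$-adic rationals, where $x$ is not unique and the one-sided behavior differs, and the case where consecutive $N$-adic intervals must be combined, are the fiddly points. Everything else is bookkeeping on top of Theorem~\ref{main}, Lemma~\ref{any}, Proposition~\ref{transfer} and Proposition~\ref{singu-ss}.
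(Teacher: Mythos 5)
Your treatment of (i) and of the dimension half of (iii) is exactly the paper's: with $f_i(z)=(z+i)/N$, Assumption \ref{Kigami} holds for $r=1/N$, and Theorem \ref{main}(i) gives $\dim_H\mu_y<\log N/\log(1/r)=1$ under (A-y) and (B-y), or under (sB-y) alone. The derivative statements in (ii) and (iii) are where your argument has genuine gaps. For (ii) you appeal to ``the strict dimension bound of Theorem \ref{main}(i) (valid once (B-y) holds)''; but that theorem requires (A-y) \emph{and} (B-y) (or (sB-y) alone), and part (ii) assumes only (B-y), so no dimension bound is available. Worse, even granting $\dim_H\mu_y<1$, the density dichotomy you sketch does not reach the conclusion: a finite non-zero derivative of $\varphi_y$ at $t$ compares $\mu_y$ with \emph{Lebesgue} measure near $t$, not with the self-similar measure $\lambda=\mu_{(p_0,\dots,p_{N-1})}$. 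The set $A$ of such points could satisfy $\lambda(A)>0$ while being Lebesgue-null and $\mu_y$-null (the three measures are in general mutually singular), and then no bound on $\dim_H\mu_y$ says anything about it. For the pointwise claim in (iii), you pass from the unconditional bound of Lemma \ref{key} on the entropy sums $\sum_k s_N(p_k(y;x))$ to a bound on $\liminf_n(-\log R_{y,n}(x))/(n\log N)$ ``for every $x$''; but the bridge between these two quantities is Proposition \ref{transfer}, which is only a $\nu_y$-a.s.\ statement (it rests on the martingale Lemma \ref{M-to-0}), so it cannot be invoked at every $x$.

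The paper's mechanism, which your proposal misses, is more elementary and bypasses dimensions entirely. By Lemma \ref{Lem-1}, the successive ratio $R_{y,n}(x)/R_{y,n-1}(x)=G_{X_n(x)}\circ H_{X_{n-1}(x)}\circ\cdots\circ H_{X_1(x)}(y)$ is precisely $\mu_y(I_n)/\mu_y(I_{n-1})$ for the nested $N$-adic intervals $I_n\ni t=\pi(x)$; if $\varphi_y'(t)=L\in(0,\infty)$ then $\mu_y(I_n)=LN^{-n}(1+o(1))$, so this ratio must converge to $1/N$. This is a \emph{pointwise necessary condition} for a non-zero derivative (this is where the no-atom hypothesis and the $N$-adic bookkeeping you flag are used). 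Condition (B-y) provides a block $(i_1,\dots,i_l)$ whose occurrence forces, via \eqref{multi-sep}, the orbit $h_m(y;x)$ out of the $\epsilon_0$-neighbourhood where all $G_j$ are close to $1/N$; under \emph{any} $(p_0,\dots,p_{N-1})$-Bernoulli measure this block occurs infinitely often almost surely (Azuma/Borel--Cantelli), so the convergence to $1/N$ fails $\lambda$-a.e.\ --- giving (ii) for every self-similar measure simultaneously, with no comparison of measures or dimensions needed. Under (sB-y) every block of length $l$ does the job, so the convergence fails for \emph{every} $x$, which is the pointwise statement in (iii). You would need to replace your arguments for (ii) and the first half of (iii) by this ratio argument.
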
 

\begin{proof}
Assume that $\varphi_y$ has non-zero derivative at $\pi(x) \in (0,1)$.    
Then, 
\begin{equation}\label{nonzero-deri} 
\lim_{n \to \infty} G_{X_{n}(x)} \circ H_{X_{n-1}(x)} \circ \cdots \circ H_{X_1(x)}(y) = \frac{1}{2}. 
\end{equation} 

Assume that \eqref{multi-sep} holds for some $\epsilon_0 \in (0, 1/N)$, $l \ge 1$ and $i_1, \dots, i_{l} \in  \Sigma_N$. 
Let $\mu_{p_0, \dots, p_{N-1}}$ be a Bernoulli measure on $\{0,1\}^{\N}$.      
Then by Azuma's inequality, 
\[ \mu_{p_0,  p_{1}}\left( \bigcap_{n \ge 1} \bigcup_{m \ge n} \bigcap_{k = 1}^{l} \{X_{m+k} = i_{k}\} \right) = 1.\]   

By this, \eqref{nonzero-deri} fails for $\mu_{p_0, \dots, p_{N-1}}$-a.e.$x \in \{0,1\}^{\N}$.   
Thus (ii) follows. 
The assumption in assertion (iii) implies that \eqref{nonzero-deri} fails for {\it every} $x \in \{0,1\}^{\N}$. 
\end{proof} 

\begin{Rem}
If we see the proof, 
assertion (ii) above holds even if we replace an arbitrarily Bernoulli measure with an arbitrarily measure satisfying that there exists $c \in (0,1)$ such that for every $m, k \ge 1$, 
\[ c\mu\left(\left[\frac{k-1}{2^m}, \frac{k}{2^m}\right)\right) \le \mu\left(\left[\frac{k-1}{2^m}, \frac{2k-1}{2^{m+1}}\right)\right) \le (1-c)\mu\left(\left[\frac{k-1}{2^m}, \frac{k}{2^m}\right)\right). \]
\end{Rem} 

In the same manner as in the proof of assertion (ii) of Proposition \ref{singu-ber-dR} below, 
\begin{Prop}[Singularity with respect to self-similar measures]  
If \eqref{def-ADF-Y}, \eqref{def-ADF-G} and \eqref{def-ADF-H} hold, 
then, $\mu_y$ is singular with respect to every $(p_0, p_1)$-self-similar measure.  
\end{Prop}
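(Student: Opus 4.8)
The plan is to deduce the statement from Proposition~\ref{singu-ss} by checking that its separation hypothesis~\eqref{multi-sep-2} holds, for the present $\{G_i\}_{i=0,1}$ and $\{H_i\}_{i=0,1}$, for an appropriate choice of $\epsilon_0>0$, $l\ge 1$ and $i_1,\dots,i_l\in\{0,1\}$ depending on the fixed weight vector $(p_0,p_1)$, and then verifying the fibre condition needed for assertion (ii) of that proposition. Observe first that since $G_1=1-G_0$, for $\epsilon_0$ small the requirement ``$G_i(z)\in[p_i-\epsilon_0,p_i+\epsilon_0]$ for both $i$'' is equivalent to $G_0(z)\in[p_0-\epsilon_0,p_0+\epsilon_0]$, and likewise for the composed constraint, so it suffices to control $G_0$ alone. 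Note also that existence and uniqueness of $\mu_y$ are already guaranteed by Theorem~\ref{main} via Example~\ref{exa-cover}(i).

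Fix positive $p_0,p_1$ with $p_0+p_1=1$. If $p_0\notin[2/5,3/5]$, then since $G_0$ is affine increasing on $Y=[0,1]$ with $G_0([0,1])=[2/5,3/5]$, the set $[0,1]\cap G_0^{-1}([p_0-\epsilon_0,p_0+\epsilon_0])$ is empty once $\epsilon_0<\mathrm{dist}(p_0,[2/5,3/5])$, so \eqref{multi-sep-2} holds with $l=1$ and any $i_1$. Now suppose $p_0\in[2/5,3/5]$; then $y^\ast:=5p_0-2$ is the unique point of $[0,1]$ with $G_0(y^\ast)=p_0$. Solving the defining equations shows that the only fixed point of $H_0$ in $[0,1]$ is $1$ and the only fixed point of $H_1$ in $[0,1]$ is $0$; in particular $y^\ast$ is not fixed by at least one of $H_0,H_1$, so I pick $i_1\in\{0,1\}$ with $H_{i_1}(y^\ast)\ne y^\ast$ (taking $i_1=1$ in the boundary case $y^\ast=1$ and $i_1=0$ when $y^\ast=0$). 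Since $G_0$ is injective, $G_0(H_{i_1}(y^\ast))\ne p_0$, so $y^\ast\notin(G_0\circ H_{i_1})^{-1}(\{p_0\})$. As $\epsilon_0\downarrow 0$ the closed sets $G_0^{-1}([p_0-\epsilon_0,p_0+\epsilon_0])$ and $(G_0\circ H_{i_1})^{-1}([p_0-\epsilon_0,p_0+\epsilon_0])$ decrease to $\{y^\ast\}$ and to a closed set not containing $y^\ast$ respectively, so by compactness of $[0,1]$ they are disjoint for $\epsilon_0$ small; hence \eqref{multi-sep-2} holds with $l=1$ and this $i_1$ (intersecting further with $Y(y)$ can only keep the intersection empty, and in fact the argument is independent of $y$).

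By Proposition~\ref{singu-ss}(i), $\nu_y$ is singular with respect to the $(p_0,p_1)$-Bernoulli measure. For assertion (ii) I would observe that here $N=2$ and $\pi\colon\{0,1\}^{\N}\to[0,1]$ is the binary expansion map, which is injective off the countable set of dyadic rationals and exactly two-to-one there; consequently $\pi^{-1}(\pi(A))\setminus A$ is at most countable for every $A\subset\{0,1\}^{\N}$, so Proposition~\ref{singu-ss}(ii) applies and gives $\mu_y\perp\mu_{(p_0,p_1)}$, which is the claim. The main point to watch is the boundary cases $p_0\in\{2/5,3/5\}$, where exactly one of $H_0,H_1$ fixes the relevant point $y^\ast$ and one must select the other index; apart from that, the argument rests only on the continuity of $G_0,H_0,H_1$, the injectivity of $G_0$, the explicit fixed-point computation, and compactness of $[0,1]$, so no serious obstacle is expected.
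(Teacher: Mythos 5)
Your proof is correct and follows essentially the route the paper intends: the paper proves this proposition by deferring to the method of Proposition~\ref{singu-ber-dR}(ii), namely verifying the separation condition \eqref{multi-sep-2} through the injectivity of $G_0$ together with the fixed-point computation $\textup{Fix}(H_0)\cap[0,1]=\{1\}$, $\textup{Fix}(H_1)\cap[0,1]=\{0\}$, and then invoking Proposition~\ref{singu-ss}. Your case analysis (the trivial case $p_0\notin[2/5,3/5]$, the boundary cases $p_0\in\{2/5,3/5\}$, and the countability of the fibres of the binary expansion map needed for part (ii)) supplies exactly the details the paper leaves implicit.
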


\begin{Rem} 
Our main results might be applicable to different models of Sierp\'inski  gaskets such as level 3 Sierp\'inski  gaskets.    
\cite{ADF14} deals with the two-dimensional standard Sierp\'inski  gaskets.   
\end{Rem} 

\begin{Rem}
The restrictions of harmonic functions on the attractors of different iterated function systems have also been considered.    
\cite[Theorem 3.2]{ES16} states that the restrictions of harmonic functions on the Hata tree is singular. 
Assume the framework of \cite[Theorem 3.2]{ES16}.  
Denote a restriction by $\varphi$, and the measure whose distribution function is $\varphi$ by $\mu_{\varphi}$. 
Then, 
\begin{equation}\label{dR-tree}
	\varphi(z) = \begin{cases}
						\dfrac{1}{|h|^2} \varphi\left( \dfrac{z}{|\alpha|^{2}} \right) & 0 \le z \le |\alpha|^2, \\
					        \left(1- \dfrac{1}{|h|^2} \right) \varphi\left( \dfrac{z - |\alpha|^{2}}{1 - |\alpha|^{2}} \right) + \dfrac{1}{|h|^2} & |\alpha|^2   \le z \le 1,     
				          \end{cases}
\end{equation}
This appears in the proof of \cite[Theorem 3.2]{ES16}\footnote{There is a typo in it, and it will be fixed in \eqref{dR-tree}.}.
See also Section 4 for functional equations of this kind.  

Then, $\mu_{\varphi}$ satisfies \eqref{g-me} for $Y = \{\textup{one-point set}\}$, $G_0 = |h|^{-2}$, $Z = [0,1]$, 
\[ F_0 (z) = |\alpha|^2 z, \textup{ and } F_1 (z) = (1-  |\alpha|^2 )z +  |\alpha|^2.\]      
(The notation $F_1$ here is different from \cite{ES16}.) 
It is known that 
\[ \dim_H \mu_{\varphi} = \frac{s_2 \left(|h|^{-2}, 1- |h|^{-2}\right) + \log 2}{-  |h|^{-2} \log |\alpha|^2 - (1-  |h|^{-2}) \log (1- |\alpha|^2)}. \]
Hence, if $a \ne b$, then, $\dim_H \mu_{\varphi} < 1$ and hence $\varphi$ is a singular function.   
Further, multifractal analysis for $\mu_{\varphi}$ is investigated. 
See \cite{F97} for details. 
\end{Rem} 

\subsection{De Rham's functional equations} 

Before we apply our results to a class of de Rham's functional equations, we give a short review.  
De Rham \cite{dR56, dR57}\footnote{An English translation of \cite{dR57} is included in Edgar \cite{E93}.} considered a certain class of functional equations. 
He considered the solution $\varphi$ of the following functional equation which takes its values in a certain metric space: 
\begin{equation}\label{dR-N}
	\varphi(x) = \begin{cases}
						g_{0}(\varphi(Nx)) & 0 \le x \le 1/N, \\
						g_{1}\left(\varphi(Nx-1)\right) & 1/N  \le x \le 2/N, \\
						\cdots\\
						g_{N-1}\left(\varphi(Nx-(N-1))\right) & (N-1)/N  \le x \le 1,     
				          \end{cases}
\end{equation}
where each $g_i$ is a weak contraction (its definition is given below), $g_0 (0) = 0$, $g_{N-1}(1) = 1$ and $g_{i+1} (0) = g_i (1)$ for each $i$.  
Solutions of de Rham's functional equations give parameterizations of several self-similar sets such as the K\^och curve and the P\'olya curve, etc.   
Some singular functions such as the Cantor, Lebesgue, etc. functions are solutions of such functional equations. 

We give a short review of some known results.   
\cite{BK00} considers self-similarity, inversion and composition of de Rham's functions, and points out a connection with Collatz's problem.  
\cite{Kr09} shows connections between sums related to the binary sum-of-digits function and the Lebesgue's singular function, and its partial derivatives with respect to the parameter.  
\cite{Kaw11} investigates the set of points where Lebesgue's singular function has the derivative zero. 
\cite{P04} regards a de Rham curve as the limit of a polygonal arc by repeatedly cutting off the corners, 
obtain a formula for the local H\"older exponent of a de Rham curve at each point, and describe the sets of points with given local regularity.   
\cite{Be08, Be12} consider multifractal and thermodynamic formalisms for the de Rham function. 
Recently, 
\cite{BKK18} performs the multifractal analysis for the pointwise H\"older exponents of zipper fractal curves on $\mathbb{R}^d$ generated by affine mappings on $\mathbb{R}^d$, $d \ge 2$. 
\cite{BKK18} and \cite{N04} consider the Hausdorff dimension of the image measure of the Lebesgue measure on an interval by the de Rham function.  
\cite[Section 9.3]{PV17} also consider de Rham curves in terms of matrix products. 
\cite{DL91, DL92-1, DL92-2, P06} are related to wavelet theory.     
The length of de Rham curve is investigated in \cite{Me98} and \cite{DMS98}. 
\cite{TGD98} uses de Rham's functional equations to construct the Sinai-Ruelle-Bowen measures for several Baker-type maps. 
de Rham's functional equations also appear in \cite{DS76}, which is concerned with gambling strategy. 
\cite{G93, Z01, SB17, Ok19+} consider generalizations of de Rham's functions. 

The case that some of $g_i$ are {\it not} affine is considered in \cite{Ha85, SLK04, Ok14, Ok16, Ok19+, SB17}, 
\cite{SLK04} considers it from a view point of random walk.  
\cite{Ha85} and \cite{SLK04} use the technique of \cite{La73}.  
 \cite{SB17} gives conditions for existence, uniqueness and continuity, and furthermore, provides a general explicit formula for contractive systems. 
However, it seems that real-analytic properties for the solutions are not fully investigated.     

Let $(M, d)$ be a metric space. 
Following \cite[Definition 2.1]{Ha85}, 
we say that a function $f : M \to M$ is a {\it weak contraction} if 
for every $t > 0$, 
\[ \lim_{s \to t, s > t} \sup_{d(x,y) \le \delta} d(f(x), f(y)) < t. \] 

By \cite[Corollary 6.6]{Ha85}, 
if each $g_i$ is a weak contraction, $g_0 (0) = 0$, $g_{N-1}(1) = 1$ and $g_{i+1} (0) = g_i (1)$ for each $i$, 
then, \eqref{dR-N} has a unique continuous solution $\varphi$ and 
we let $\mu = \mu_{\varphi}$ be the probability measure such that the solution $\varphi$ of \eqref{dR-N} is the distribution function of $\mu_{\varphi}$.
If $\dim_H \mu_{\varphi} < 1$, then, $\mu_{\varphi}$ is singular with respect to the Lebesgue measure, and hence $\varphi$ is a singular function on $[0,1]$. 
Therefore, it is valuable to know whether  $\dim_H \mu_{\varphi} < 1$ or not.

\subsubsection{Dimension formula}

Contrary to the examples in the above subsections, in this framework, $\mu_y$ is the invariant measure of a certain iterated function system with place-dependent probabilities,   
and furthermore, we have a form of dimension formula for $\mu_{\varphi}$ for a large class of $(g_i)_i$, thanks to Fan-Lau \cite{FL99}. 
By the Lebesgue-Stieltjes integral, for every bounded Borel measurable function $F : [0,1] \to \mathbb{R}$, 
\[ \int_{[0,1]} F(x) d\mu_{\varphi}(x) = \sum_i \int_{[0,1]} g_i^{\prime}(\varphi(x)) F\left(\frac{x+i}{N}\right) d\mu_{\varphi}(x).  \]  

We assume that $g_i \in C^2([0,1])$ and $0 < g_i^{\prime}(z) < 1$ hold for each $i \in \Sigma_N$ and every $z \in [0,1]$. 
Then, $\varphi$ is H\"older continuous and 
\[ \int_0^1 \frac{\sup\left\{\left| \log g_i^{\prime} (\varphi(s_1))  - \log g_i^{\prime} (\varphi(s_2)) \right| : |s_1 - s_2| \le t\right\}}{t} dt \le \sup_{s \in [0,1]} \frac{|g_i^{\prime\prime}(s)|}{g_i^{\prime}(s)} \int_0^1 t^{c-1} dt < +\infty, \]
where we let $c = c_{\varphi}$ be a positive number.  

Let $h$ be a positive function on $[0,1]$ such that 
\[ h(x) = \sum_{i \in \Sigma_N} g_i^{\prime}\left(g_i (\varphi(x))\right) h \left(\frac{x+i}{N}\right).  \]
\[ h \circ \varphi^{-1} (x) = \sum_i g_i^{\prime}(g_i (x)) h \circ \varphi^{-1} (g_i (x)).  \]
This is unique under the constraint that 
\[  \int_{[0, 1]}  h(x) \mu_{\varphi}(dx) = 1. \] 
See \cite[Theorem 1.1]{FL99}.  

Then, by \cite[Corollary 3.5]{FL99},  
\[ \dim_H \mu_{\varphi} 
= \frac{\sum_{i \in \Sigma_N} \int_{[i/N, (i+1)/N]} h(x) \log \left(1 / g_i^{\prime}(\varphi (x)) \right) \mu_{\varphi}(dx)}{\log N}. \]
We have that 
\[ \dim_H \mu_{\varphi} 
= \frac{\sum_{i \in \Sigma_N}\int_{[0, 1]} H(g_i (y)) g^{\prime}_i (y) \log \left(1/g_i^{\prime}(g_i(y))\right) \ell(dy)}{\log N}, \]  
where $\ell$ is the Lebesgue measure on $[0,1]$ and $H$ is a function on $[0,1]$ satisfying the following conditions: 
\begin{equation}\label{fe-altered} 
H(y) = \sum_{i \in \Sigma_N} g_i^{\prime}(g_i (y)) H(g_i (y)),  \textup{ and } \int_{[0, 1]} H(y) \ell(dy) = 1. 
\end{equation}  

It is interesting to investigate properties for $H$. 
If each $g_i$ is linear, in other words, $g_i^{\prime}$ is a constant function, 
then, $\sum_{i \in \Sigma_N} g_i^{\prime} = 1$, and hence, 
$H \equiv 1$ satisfies \eqref{fe-altered}. 

We focus on the case that $g_i$ is {\it not} affine. 
In that case, it is difficult  for knowing whether 
\[ \sum_{i \in \Sigma_N}\int_{[0, 1]} H(g_i (y)) g^{\prime}_i (y) \log \left(1/g_i^{\prime}(g_i(y))\right) \ell(dy) < \log N. \]  
 
In the following subsection, we give a necessary and sufficient condition for $\dim_H \mu_{\varphi} < 1$ for a specific choice for $(g_i)_i$ by using Theorem \ref{main}.      
It is interesting to investigate properties for $H$, but in this paper we do not analyze $H$ directly.  

Furthermore, by \cite[Theorem 1.6]{FL99}, 
\begin{equation}\label{limit-exist}
\lim_{n \to \infty} \frac{-\log \mu_{\varphi}(I_n (x))}{n} = \sum_{i \in \Sigma_N} \int_{[0,1]} h(z)\log \left(1/g_i^{\prime}(\varphi(z))\right) d\mu_{\varphi}(z),  \textup{ $\mu_y$-a.s.$x \in [0,1]$,} 
\end{equation} 
where we let $I_n (x) := [i/2^n, (i+1)/2^n)$ such that $x \in [i/2^n, (i+1)/2^n)$.  
However, we do not see how to estimate the integrand in the right hand side of \eqref{limit-exist}. 
Arguments in \cite{FL99} depend on the fact $\mu_y$ is an invariant measure of an iterated function system, and we are not sure whether a convergence corresponding to \eqref{limit-exist} holds for the examples in the above subsections.  

\subsubsection{De Rham's functional equations driven by $N$ linear fractional transformations}

\cite{Ok14} considers de Rham's functional equations driven by {\it two} linear fractional transformations, 
here we consider not only the case that $N = 2$ and but also the case that $N \ge 3$.  
Our outline is similar to the one in \cite{Ok14}, however several additional considerations are needed.  

In the following, we consider the equation \eqref{dR-N} for the case that all $g_{i}$ are {\it linear fractional transformations}. 
Let $\Phi(A ; z) := \dfrac{az + b}{cz + d}$ for a $2 \times 2$ real matrix $A = \begin{pmatrix} a & b  \\ c & d \end{pmatrix}$ and $z \in \mathbb{R}$.
Let \[ g_{i}(x) := \Phi\left(A_{i}; x\right), \ x \in [0,1], \ i = 0, 1, \dots, N-1,\] 
such that $2 \times 2$ real matrices $A_{i} = \begin{pmatrix} a_{i} & b_{i}  \\ c_{i} & d_{i} \end{pmatrix}$, $i = 0,1$, satisfy the following conditions (A1) - (A3).\\
(A1) $\Phi(A_{0}; 0) = 0$, $\Phi(A_{N-1}; 1) = 1$, and \\
\[ \Phi(A_{i-1} ; 1) =\Phi(A_{i} ; 0), \ i = 1, \dots, N-1.\]   
(A2) \[ \det A_i = a_{i}d_{i} - b_{i}c_{i} > 0, \ i = 0,1, \dots, N-1.\]
(A3) \[ (a_{i}d_{i} - b_{i}c_{i})^{1/2} \le \min\{d_{i}, c_{i}+d_{i}\}, \ i = 0,1, \dots, N-1.\] 

In several cases, we will replace (A3) with a stronger condition (sA3):  
\[ (a_{i}d_{i} - b_{i}c_{i})^{1/2} < \min\{d_{i}, c_{i}+d_{i}\}, \ i = 0,1, \dots, N-1. \] 

By (A2) and (A3),  $d_i > 0$, and henceforth, 
we can assume $d_i = 1$ for each $i$, without loss of generality.  
By (A1), $b_0 = 0$, and, $0 < b_i < 1$, $1 \le i \le N-1$. 
By (A3), $a_0 \le 1$. 

\begin{Lem}
For each $i$, $\Phi\left(A_{i}; x\right)$ is a weak contraction on $[0,1]$.  
\end{Lem}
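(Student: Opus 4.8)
The plan is to reduce the statement to the elementary fact that $g_i = \Phi(A_i;\cdot)$ is a $C^1$ self-map of $[0,1]$ whose derivative is at most $1$ and equals $1$ at most at one point. First I would check that, with the normalization $d_i=1$ already in force, $g_i$ is well defined and smooth on $[0,1]$: the affine function $x\mapsto c_ix+1$ lies between $1$ and $c_i+1$ on $[0,1]$, and $c_i+1>0$ since $(\det A_i)^{1/2}\le\min\{1,c_i+1\}$ by (A3) while $\det A_i>0$ by (A2); hence $c_ix+1>0$ on $[0,1]$. A direct computation then gives
\[ g_i'(x)=\frac{\det A_i}{(c_ix+1)^2}>0,\qquad x\in[0,1], \]
so $g_i$ is strictly increasing on $[0,1]$.

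Next I would isolate two structural facts. (a) (self-map) Using monotonicity of each $g_j$ together with (A1), one gets the chain $0=g_0(0)\le g_0(1)=g_1(0)\le\cdots\le g_{N-1}(0)\le g_{N-1}(1)=1$, so $g_i([0,1])=[g_i(0),g_i(1)]\subseteq[0,1]$. (b) (derivative bound) Since $(c_ix+1)^2\ge\min\{1,c_i+1\}^2\ge\det A_i$ on $[0,1]$, the last inequality being (A3) squared, we obtain $g_i'(x)\le1$ for $x\in[0,1]$; and $g_i'(x)=1$ forces $c_ix+1=(\det A_i)^{1/2}$, which has at most one solution if $c_i\ne0$, while if $c_i=0$ it would give $\det A_i=a_i=1$, hence $g_i(x)=x+b_i$, and then (a) forces $b_i=0$, i.e. $g_i=\mathrm{id}$ — but by (A1) this would make a neighbouring $g_j$ constant on $[0,1]$, contradicting $g_j'>0$. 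So $g_i'\le1$ on $[0,1]$ with equality at most at a single point.

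Finally I would deduce the weak contraction property from (a) and (b). By (b), for $0\le x<y\le1$ we have $g_i(y)-g_i(x)=\int_x^y g_i'(t)\,dt<y-x$. Fix $t>0$ and set $\omega(s):=\sup\{\,|g_i(x)-g_i(y)|:x,y\in[0,1],\ |x-y|\le s\,\}$. For each $s>0$ this supremum is attained, by compactness of $\{(x,y)\in[0,1]^2:|x-y|\le s\}$ and continuity of $g_i$; at an extremal pair the value is $0$ when the pair is diagonal and is $<|x-y|\le s$ otherwise, so $\omega(s)<s$. Passing to a subsequential limit of near-extremal pairs as $s\downarrow t$ then yields $\lim_{s\to t,\,s>t}\omega(s)<t$, which is exactly the defining inequality for a weak contraction on $[0,1]$. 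The only delicate point is the borderline case of equality in (A3), where $g_i'$ touches $1$: there one must use the ``at most one point'' observation (together with the exclusion of $g_i=\mathrm{id}$) to keep $g_i(y)-g_i(x)<y-x$ strict; the rest is routine.
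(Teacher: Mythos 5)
Your proof is correct and follows essentially the same route as the paper: condition (A3) bounds the difference quotient $\det A_i/((c_ix+1)(c_iy+1))$ (equivalently the derivative) by $1$, strictly so away from the diagonal, and the weak-contraction property follows. The only differences are cosmetic: you handle the affine case $c_i=0$ by a self-map/neighbour contradiction where the paper reads $a_i<1$ directly off (A1) (via $b_{i+1}<1$ and $b_i\ge 0$), and you spell out the compactness and limit step in Hata's definition that the paper leaves implicit.
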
 

\begin{proof}
If $c_i = 0$, then, 
$\Phi\left(A_{i}; x\right) = a_i x + b_i$. 
By (A2) $a_1 > 0$. 
By (A1), $a_i + b_i = b_{i+1} < 1$ and $b_i \ge 0$ for $i \le N-2$, and,  $a_i + b_i \le 1$ and $b_i > 0$ for $i \ge 1$.   
Hence $a_i < 1$. 
Thus $\Phi\left(A_{i}; x\right)$ is a contraction on $[0,1]$.  

If $c_i \ne 0$,  
then, by (A3), for every $t > 0$,  
\[ \min_{t < |x-y|} \frac{a_i - b_i c_i}{(c_i x + 1)(c_i y + 1)} < 1.  \]
Hence, $\Phi\left(A_{i}; x\right)$ is a weak contraction.  
\end{proof} 

Therefore, 
(A1) - (A3) guarantee that \eqref{dR-N} has a unique continuous solution $\varphi$. 
The above (sA3) is identical with (A3) in \cite{Ok14}.  

We remark that our framework contains the cases that the technique of \cite{La73} appearing in \cite[Theorem 7.3]{Ha85} and \cite[Proposition 3.1]{SLK04} is not  applicable.   
By (A1) - (A3), 
$$b_i - c_i \le 1- a_i \le 1 - b_{i}c_{i}.$$
Hence, 
\[ (1-a_i)^2 + 4b_i c_i \ge \min\{(b_i + c_i)^2, (1 + b_{i}c_{i})^2\} \ge 0. \]

Let 
\[ \alpha := \min\left\{0, \frac{c_0}{1 - a_0},  \frac{a_i - 1 + \sqrt{(1-a_i)^2 + 4b_i c_i}}{2b_i}  \ \middle| \ 1 \le i \le N-1  \right\}. \]
If $a_0 = 1$, then, we replace $c_0/(1 - a_0)$  in the right hand side of the above definition with $-1$. 

Let  
\[ \beta := \max\left\{0, \frac{c_0}{1 - a_0}, \frac{a_i - 1 + \sqrt{(1-a_i)^2 + 4b_i c_i}}{2b_i}  \ \middle| \ 1 \le i \le N-1 \right\}. \]
If $a_0 = 1$, then, we replace $c_0/(1 - a_0)$ in the right hand side of the above definition with $+\infty$.   

Let $Y := [\alpha, \beta]$. 
We consider the topology of $Y$ defined by the Euclid metric. 
For $k \in \Sigma_N$ and $y \in Y$, let 
\[ G_{k}(y) := \frac{(a_{k} - b_k c_k)(y+1)}{(b_{k}y + 1)\left((a_{k} + b_{k})y +c_{k} + 1\right)},  \textup{ and } H_k (y) :=  \frac{a_k y + c_k}{b_k y + 1}. \]  
If $a_0 = 1$, then, 
$$G_{0}(+\infty) := 1,  G_{k}(+\infty) := 0,  H_{0}(+\infty) :=  +\infty,  H_k (+\infty) := \frac{a_k}{b_k},  1 \le k \le N-1.$$  

\begin{Lem}\label{lem-fund-dR}
(i-1) If $a_0 = 1$, then, $\alpha = -1$ and $\beta = +\infty$. \\
(i-2) If $a_0 < 1$ and $b_{N-1} + c_{N-1} = 0$, then, $-1 = \alpha \le \beta < +\infty$.  \\
(i-3) If $a_0 < 1$ and $b_{N-1} + c_{N-1} > 0$, then, $-1 < \alpha \le \beta < +\infty$.\\
(ii) $H_i (y) \in Y$ for $i \in \Sigma_N, y \in Y$.\\  
(iii) \eqref{G-sum-1} holds.
\end{Lem}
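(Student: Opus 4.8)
The plan is to dispatch part (iii) by a partial fraction decomposition and parts (i)--(ii) by locating the real fixed points of the maps $H_k$ and comparing them with $-1$. Throughout I will use the consequences of (A1)--(A3) already recorded in the excerpt: $b_0 = 0$, $0 < b_i < 1$ and $c_i + 1 > 0$ for $1 \le i \le N-1$, $0 < a_0 \le 1$, $\det A_i = a_i - b_i c_i > 0$, and $\Delta_i := (1-a_i)^2 + 4 b_i c_i \ge 0$.

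For (iii), set $m_i := g_i(1) = \dfrac{a_i + b_i}{c_i + 1}$, so that (A1) reads exactly $b_0 = 0$, $m_{N-1} = 1$, and $m_{i-1} = b_i$ for $1 \le i \le N-1$. Decomposing each $G_k$ into partial fractions (legitimate since $\det A_k \ne 0$) and normalising the second denominator by $c_k + 1$ yields
\[ G_k(y) = \frac{1 - b_k}{b_k y + 1} - \frac{1 - m_k}{m_k y + 1}. \]
Summing over $k \in \Sigma_N$ and using $m_k = b_{k+1}$ for $0 \le k \le N-2$, the two families of terms telescope and one is left with $\dfrac{1 - b_0}{b_0 y + 1} - \dfrac{1 - m_{N-1}}{m_{N-1} y + 1} = 1 - 0 = 1$. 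It remains to check that no denominator vanishes on $Y$: for $k \le N-2$ the poles $-1/b_k$ and $-1/m_k$ lie strictly below $-1 \le \alpha$ by part (i), while for $k = N-1$ the only possibly vanishing factor, at $y = -1$, cancels against the numerator $(y+1)$, so $G_{N-1}$ extends continuously; in the case $a_0 = 1$ the prescribed values give $\sum_k G_k(+\infty) = 1 + 0 + \cdots = 1$ outright.

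For (i)--(ii) the central objects are, for $1 \le k \le N-1$, the fixed points $y_k^\pm = \dfrac{a_k - 1 \pm \sqrt{\Delta_k}}{2 b_k}$ of $H_k$, together with the fixed point $c_0/(1-a_0)$ of $H_0$ (or, when $a_0 = 1$, the translation $y \mapsto y + c_0$ with $c_0 \ge 0$). The key elementary identity, valid since $c_k + 1 > 0$ and $a_k + b_k = m_k(c_k+1)$, is
\[ \Delta_k - (a_k - 1 + 2 b_k)^2 = 4 b_k (c_k + 1)(1 - m_k), \]
whose right side is $\ge 0$ because $m_k \le 1$; hence $\sqrt{\Delta_k} \ge |a_k - 1 + 2 b_k|$, which forces $y_k^- \le -1 \le y_k^+$ for every $k \ge 1$, and when $m_k < 1$ (i.e.\ $k \le N-2$) the right side is $> 0$, giving $y_k^+ > -1$ strictly. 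For $k = N-1$, where $m_{N-1} = 1$, the identity reads $\Delta_{N-1} = (a_{N-1} - 1 + 2 b_{N-1})^2$; combined with (A3) for the index $N-1$, which forces $a_{N-1} + 2 b_{N-1} \ge 1$, this gives $y_{N-1}^+ = c_{N-1}/b_{N-1}$, equal to $-1$ exactly when $b_{N-1} + c_{N-1} = 0$ and strictly larger when $b_{N-1} + c_{N-1} > 0$. Finally, when $a_0 < 1$, (A3) gives $c_0 \ge \sqrt{a_0} - 1$, whence $c_0/(1-a_0) \ge -1/(1 + \sqrt{a_0}) > -1$. Reading $\alpha = \min$ and $\beta = \max$ over $\{0,\ c_0/(1-a_0),\ y_1^+, \dots, y_{N-1}^+\}$ (with the slot $c_0/(1-a_0)$ understood as $-1$ for $\alpha$ and as $+\infty$ for $\beta$ when $a_0 = 1$) now gives (i): all listed values lie in $[-1, +\infty)$ when $a_0 < 1$ and $y_{N-1}^+$ attains $-1$ precisely in the boundary case $b_{N-1} + c_{N-1} = 0$, while $\beta$ is finite if and only if $a_0 < 1$.

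For (ii): every $H_k$ is strictly increasing where defined, since $H_k'(y) = \det A_k/(b_k y + 1)^2 > 0$; for $k \ge 1$ its pole $-1/b_k$ satisfies $-1/b_k < -1 \le \alpha$, so $H_k$ carries $Y = [\alpha, \beta]$ continuously and monotonically onto $[H_k(\alpha), H_k(\beta)]$. By (i) one has $y_k^- \le -1 \le \alpha \le y_k^+ \le \beta$, so the factorisation $H_k(y) - y = \dfrac{-b_k (y - y_k^-)(y - y_k^+)}{b_k y + 1}$, whose denominator is positive on $Y$, yields $H_k(\alpha) \ge \alpha$ and $H_k(\beta) \le \beta$, i.e.\ $H_k(Y) \subseteq Y$; the affine map $H_0$ is treated the same way using $\alpha \le c_0/(1-a_0) \le \beta$, and when $a_0 = 1$ one checks separately $H_0(+\infty) = +\infty$ and $H_k(+\infty) = a_k/b_k \ge y_k^+ \ge -1$ for $k \ge 1$. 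I expect the crux of the lemma to be exactly the fact behind the displayed identity above: that $-1$ lies inside every fixed-point interval $[y_k^-, y_k^+]$ and no farther right than $c_0/(1-a_0)$, which is precisely what (A3) buys and is what makes the single interval $Y$ invariant under all the $H_k$ simultaneously.
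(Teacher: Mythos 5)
Your proposal is correct and follows essentially the same route as the paper: part (iii) via the closed form $G_i(y) = \dfrac{(y+1)(b_{i+1}-b_i)}{(b_{i+1}y+1)(b_i y+1)}$ and telescoping, and parts (i)--(ii) by sandwiching $-1$ between the two roots of $H_k(y)=y$ and invoking the monotonicity of each $H_k$ on $[\alpha,\beta]$. Your identity $\Delta_k - (a_k-1+2b_k)^2 = 4b_k(c_k+1)(1-m_k)$ merely makes explicit the computation behind the paper's inequalities \eqref{minus-1-sand-1}--\eqref{minus-1-sand-3}, and your use of (A3) in place of the paper's use of (A1) to get $c_0/(1-a_0) > -1$ is an equally valid minor variation.
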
 

\begin{proof}
(i) By (A1), 
it holds that if $1 \le i \le N-2$,  then, 
\begin{equation}\label{minus-1-sand-1} 
\frac{a_i - 1 - \sqrt{(1-a_i)^2 + 4b_i c_i}}{2b_i} < -1 < \frac{a_i - 1 + \sqrt{(1-a_i)^2 + 4b_i c_i}}{2b_i}. 
\end{equation}
By (A2) and (A3), $b_{N-1} + c_{N-1} \ge 0$. 
By this and (A1), 
it holds that if $i = N-1$, then, 
\begin{equation}\label{minus-1-sand-2} 
\frac{a_{N-1} - 1 - \sqrt{(1-a_{N-1})^2 + 4b_{N-1} c_{N-1}}}{2b_{N-1}} = -1,  
\end{equation} 
and, 
\begin{equation}\label{minus-1-sand-3} 
-1 \le \frac{c_{N-1}}{b_{N-1}} =  \frac{a_{N-1} - 1 + \sqrt{(1-a_{N-1})^2 + 4b_{N-1} c_{N-1}}}{2b_{N-1}}. 
\end{equation} 

Hence, if $a_0 = 1$, then, $\alpha = -1$. 
If $a_0 < 1$, then, by (A1), $a_0 < c_0 + 1$. 
Hence, if $a_0 < 1$ and $b_{N-1} + c_{N-1} > 0$, then, $\alpha > -1$.

(ii) Let $i = 0$.   
First we remark that $H_0 (z) = a_0 z + c_0$. 
Assume $a_0 < 1$.
Then, by $\alpha \le c_0 / (1 - a_0) \le \beta$, 
we see that $\alpha \le H_0 (\alpha) \le H_0 (\beta) \le \beta$.
If $a_0 = 1$, then, $\beta = +\infty$. 
It is easy to see that $\alpha \le H_0 (\alpha)$.

Let $1 \le i \le N-1$.   
Then, $H_i (z) \ge z$ if and only if 
\begin{equation}\label{H-i} 
\frac{a_i - 1 - \sqrt{(1-a_i)^2 + 4b_i c_i}}{2b_i} \le z \le \frac{a_i - 1 + \sqrt{(1-a_i)^2 + 4b_i c_i}}{2b_i}. 
\end{equation} 
Hence, $H_i (\beta) \le \beta$.

By (A2), for every $k$, 
$H_k (z)$ is monotone increasing on $z \ge -1$. 
By this, \eqref{minus-1-sand-1}, \eqref{minus-1-sand-2}, \eqref{minus-1-sand-3}, and \eqref{H-i},   
$H_i (\alpha) \ge \alpha$.

(iii) It is easy to see by calculations that for every $i \ge 0$,  
\begin{equation*}
G_i (y) = \frac{(y+1)(b_{i+1} - b_i)}{(b_{i+1}y + 1)(b_i y + 1)}. 
\end{equation*} 
The assertion follows from this and (A1). 
\end{proof}

\begin{Lem}
If \textup{(sA3)} holds, then, \eqref{A-y} holds for $y = 0$. 
\end{Lem}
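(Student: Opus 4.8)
The plan is to bound the functions $G_i$ uniformly on a \emph{compact} interval that contains $Y(0)$, and then read off $\textup{(A-y)}$ at $y=0$ from that. Concretely, I would show that under $\textup{(sA3)}$ the set $Y=[\alpha,\beta]$ is a compact interval with $-1<\alpha$, that each $G_i$ is continuous with $0<G_i<1$ on $[\alpha,\beta]$, and that $Y(0)\subseteq[\alpha,\beta]$; since there are only $N$ functions $G_i$, all continuous on the compact set $[\alpha,\beta]$, this gives $0<\inf_{i,\,z\in[\alpha,\beta]}G_i(z)\le\sup_{i,\,z\in[\alpha,\beta]}G_i(z)<1$, and restricting to $Y(0)$ is exactly $\textup{(A-y)}$ at $y=0$.

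First I would extract from $\textup{(sA3)}$ the two facts that make $Y$ compact. Since $d_0=1$ and $b_0=0$, one has $\det A_0=a_0$, so $\textup{(sA3)}$ at $i=0$ gives $0<a_0<1$. For $i=N-1$, write $s:=a_{N-1}+b_{N-1}$; then $\textup{(A1)}$ gives $c_{N-1}+1=s$ (and $s>0$ by $\textup{(A2)}$--$\textup{(A3)}$), and a short computation gives $\det A_{N-1}=a_{N-1}-b_{N-1}c_{N-1}=s(1-b_{N-1})$. If one had $b_{N-1}+c_{N-1}\le 0$, i.e.\ $s\le 1-b_{N-1}$, then $\min\{d_{N-1},c_{N-1}+d_{N-1}\}=\min\{1,s\}=s$ and $\det A_{N-1}=s(1-b_{N-1})\ge s^2=\big(\min\{d_{N-1},c_{N-1}+d_{N-1}\}\big)^2$, contradicting $\textup{(sA3)}$. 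So $a_0<1$ and $b_{N-1}+c_{N-1}>0$, and Lemma \ref{lem-fund-dR}(i) then yields $-1<\alpha\le\beta<+\infty$. Thus $Y=[\alpha,\beta]$ is compact, it contains $0$ by the definitions of $\alpha$ and $\beta$, and $Y(0)\subseteq Y$ by Lemma \ref{lem-fund-dR}(ii).

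Next I would read off positivity and the upper bound from the factored expression $G_i(y)=(y+1)(b_{i+1}-b_i)\big/\big[(b_{i+1}y+1)(b_i y+1)\big]$ established in the proof of Lemma \ref{lem-fund-dR}(iii), where the sequence $b_0,\dots,b_{N-1}$ is extended by $b_N:=\Phi(A_{N-1};1)=1$. Each $g_i=\Phi(A_i;\cdot)$ is strictly increasing on $[0,1]$ (its derivative is $\det A_i/(c_i x+1)^2>0$, and $c_i x+1>0$ there since $c_i+1\ge\sqrt{\det A_i}>0$ by $\textup{(A2)}$--$\textup{(A3)}$), so from $b_i=g_i(0)=g_{i-1}(1)$ we get $0=b_0<b_1<\cdots<b_{N-1}<b_N=1$. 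On $[\alpha,\beta]$ the factor $y+1>0$ because $\alpha>-1$, each $b_{i+1}-b_i>0$, and $b_j y+1>0$ for $0\le j\le N$ (clear for $b_0=0$; for $0<b_j\le 1$, $b_j y+1\ge b_j\alpha+1>1-b_j\ge 0$). Hence each $G_i$ is continuous and strictly positive on the compact set $[\alpha,\beta]$; combining with $\sum_{i\in\Sigma_N}G_i\equiv 1$ from Lemma \ref{lem-fund-dR}(iii), positivity of all the $G_j$ forces $G_i<1$ on $[\alpha,\beta]$ as well. Feeding these uniform bounds into the argument of the first paragraph proves $\textup{(A-y)}$ at $y=0$.

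I expect the only non-routine point to be the passage from $\textup{(sA3)}$ to $\alpha>-1$ (equivalently $b_{N-1}+c_{N-1}>0$): this is precisely the strengthening of $\textup{(A3)}$ to $\textup{(sA3)}$ that keeps the numerator $y+1$ of each $G_i$ away from $0$ on $Y(0)$, and hence what distinguishes this lemma from the merely-$\textup{(A3)}$ case; everything else is bookkeeping with the structure of $Y$ already recorded in Lemma \ref{lem-fund-dR}.
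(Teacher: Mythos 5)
Your proposal is correct and follows essentially the same route as the paper: deduce from (sA3) that $a_0<1$ and $b_{N-1}+c_{N-1}>0$, invoke Lemma \ref{lem-fund-dR} (i) to get $-1<\alpha\le\beta<+\infty$, and then use continuity of the $G_i$ on the compact set $[\alpha,\beta]\supseteq Y(0)$ to obtain the uniform bounds. You merely fill in more detail than the paper does (the explicit verification that (sA3) forces $b_{N-1}+c_{N-1}>0$, and deriving $G_i<1$ from positivity plus $\sum_j G_j\equiv 1$ rather than from the formula for $G_i$ directly), which is fine.
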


\begin{proof} 
By (sA3), $1 > a_0$ and $b_{N-1} + c_{N-1} > 0$. 
By this and Lemma \ref{lem-fund-dR} (i), 
we have that $-1 < \alpha \le \beta < +\infty$.
Therefore, \[ 0 < G_0 (\alpha) \le G_0 (\beta) < 1.\] 

Let $i \ge 1$. 
Since $\alpha > -1$ and $0 < b_i < 1$, 
\[ \inf_{y \ge \alpha} G_i (y) > 0.\] 
By the definition of $G_i$, 
we can show that 
if $y \ge -1$, then, $G_i (y) < 1$. 
Hence, by continuity of $G_i$, 
\[ \sup_{y \in [\alpha, \beta]} G_i (y) < 1.\]     
\end{proof} 

Hereafter we denote the set of fixed points of a map $f$ by $\textup{Fix}(f)$.  

\begin{Rem} 
(i) In the above proof, we have used $b_{N-1} + c_{N-1} > 0$. 
However, if $i < N-1$, then, $b_i + c_i > 0$ may fail. \\
(ii) If $i \ge 1$, $G_i$ may not be increasing. \\
(iii) \[ \left|\bigcap_{i \in \Sigma_N} \textup{Fix}(H_i)\right| \le  \left|\textup{Fix}(H_0)\right| = 1.\]
(iv) If $a_0 < 1$, then, $\textup{Fix}(H_0) = \left\{c_0/(1-a_0) \right\}$.
If $a_0 = 1$, then, by extending the domain of $H_0$, $\textup{Fix}(H_0) = \left\{+\infty \right\}$.  
\end{Rem} 

\begin{Lem} 
For the above definitions of $Y$, $\{G_i\}_i$ and $\{H_i\}_i$, assume that $\{\nu_y\}_{y \in Y}$ satisfies \eqref{g-me} for $Z = (\Sigma_N)^{\mathbb{N}}$ and $F_i (x) = ix$. 
Then, 
\[ \pi(x) = \sum_{i \ge 1} \frac{X_i (x)}{N^i}, \]
and, 
\[ \mu_{\varphi} = \nu_0 \circ \pi^{-1}. \]    
\end{Lem}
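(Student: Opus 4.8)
The plan is to prove the two identities separately, both by reducing to the defining cylinder-measure formula for $\nu_0$ and the natural projection $\pi$. First I would establish the formula for $\pi$. Since $f_i(z) = (z+i)/N$ on $[0,1]$, the attracter is $K = [0,1]$ and the natural projection is characterized by the commutation relation \eqref{commute}: $\pi(ix) = f_i(\pi(x)) = (\pi(x)+i)/N$. Iterating this along the first $n$ coordinates of $x$ gives $\pi(x) \in f_{X_1(x)\cdots X_n(x)}([0,1]) = \left[\sum_{i=1}^{n} X_i(x)/N^i, \sum_{i=1}^{n} X_i(x)/N^i + N^{-n}\right]$, an interval of length $N^{-n} \to 0$. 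Hence $\pi(x)$ is the unique common point of this nested sequence of intervals, namely $\sum_{i\ge 1} X_i(x)/N^i$. This is just the statement that $\pi$ is the base-$N$ expansion map, and uniqueness of $\pi$ satisfying \eqref{commute} (quoted in the excerpt) closes this half.

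Next I would identify $\mu_\varphi$ with $\nu_0 \circ \pi^{-1}$. By \eqref{mu-nu-pi} (valid here since each $f_i$ is contractive and Assumption \ref{Kigami} holds for $K=[0,1]$ by Example \ref{exa-cover}(i)), the solution $\mu_0$ of \eqref{g-me} for $Z = K$ and $F_i = f_i$ equals $\nu_0 \circ \pi^{-1}$. So it remains to check that $\mu_\varphi$, the measure whose distribution function is the de Rham solution $\varphi$, coincides with $\mu_0$ — equivalently, that $\mu_\varphi$ solves \eqref{g-me} with the given $G_i$ (at $y=0$ and its orbit under the $H_i$), and then invoke uniqueness. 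I would compute the distribution function of $\nu_0 \circ \pi^{-1}$ at a base-$N$ rational $k/N^n$: by the $\pi$-formula just proved, $(\nu_0\circ\pi^{-1})([0,k/N^n]) = \nu_0\left(\{x : \pi(x) \le k/N^n\}\right) = \sum \nu_0(I(i_1,\dots,i_n))$ over the appropriate cylinders, and the cylinder formula $\nu_0(I(i_1,\dots,i_n)) = \prod_{j=1}^n G_{i_j}\circ H_{i_{j-1}}\circ\cdots\circ H_{i_1}(0)$ should telescope against the de Rham recursion \eqref{dR-N} with $g_i = \Phi(A_i;\cdot)$. Concretely, I expect the relation $G_i(y) = g_i'(\cdot)$-type weight to reproduce $\varphi(x/N + i/N) - \varphi(i/N) = g_i(\varphi(x)) - g_i(0)$ after the change of variable tracking the $H_i$-orbit of $0$, matching the Lebesgue-Stieltjes version of de Rham's equation displayed earlier in the section.

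The cleanest route is probably to verify directly that $\nu_0 \circ \pi^{-1}$ satisfies \eqref{g-me}: for a Borel set $B \subset [0,1]$, decompose $\pi^{-1}(B) = \bigsqcup_{i\in\Sigma_N} \{x : X_1(x) = i,\ \pi(\sigma x) \in f_i^{-1}(B)\}$ where $\sigma$ is the shift, use that $\nu_0$ restricted to the cylinder $[X_1 = i]$ and pushed by $\sigma$ equals $G_i(0)\,\nu_{H_i(0)}$ (immediate from the cylinder formula), and that $\pi \circ \sigma = f_i^{-1}\circ\pi$ on that cylinder by \eqref{commute}. This yields $(\nu_0\circ\pi^{-1})(B) = \sum_i G_i(0)\,(\nu_{H_i(0)}\circ\pi^{-1})(f_i^{-1}(B))$, i.e. \eqref{g-me} at $y=0$; the same holds at every point of $Y(0)$, so by the uniqueness in Theorem \ref{main} the family $\{\nu_y \circ \pi^{-1}\}$ is \emph{the} solution $\{\mu_y\}$, and in particular $\mu_0 = \nu_0\circ\pi^{-1}$. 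Finally I would note that $\mu_0$, having distribution function solving the same functional equation \eqref{dR-N} as $\varphi$ (with the same boundary data $g_0(0)=0$, $g_{N-1}(1)=1$, matching conditions), must equal $\mu_\varphi$ by uniqueness of the continuous solution of \eqref{dR-N} (\cite[Corollary 6.6]{Ha85}).

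The main obstacle I anticipate is the bookkeeping in the second step: carefully matching the product $\prod_{j=1}^n G_{i_j}\circ H_{i_{j-1}}\circ\cdots\circ H_{i_1}(0)$ against the iterated de Rham map and confirming that the particular algebraic forms of $G_i$ and $H_i$ chosen here are exactly the ones that make $\varphi$'s Stieltjes measure solve \eqref{g-me} — this is a finite but slightly delicate computation identifying the "component" parameter $y$ (the base point in $[\alpha,\beta]$) with the cross-ratio-type quantity that the linear fractional $g_i$ track. Everything else — the $\pi$ formula, the pushforward identity, and the appeals to uniqueness — is routine given the results already in the excerpt.
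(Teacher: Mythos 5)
Your first half (the nested-interval argument for $\pi(x)=\sum_{i\ge 1}X_i(x)/N^i$) and your primary plan for the second half --- matching $\nu_0$ of cylinders against $\mu_\varphi$ of the corresponding $N$-adic intervals and then concluding on a generating class --- is exactly the paper's route. The paper executes the computation you defer as ``expected telescoping'' by writing $A_{X_1(x)}\cdots A_{X_n(x)}=\left(\begin{smallmatrix}p_n & q_n\\ r_n & s_n\end{smallmatrix}\right)$, observing that the iterated de Rham equation gives $\mu_\varphi(\pi(I_n(x)))=\Phi(A_{X_1}\cdots A_{X_n};1)-\Phi(A_{X_1}\cdots A_{X_n};0)=(p_ns_n-q_nr_n)/(s_n(r_n+s_n))$, and checking that the ratio of consecutive such quantities equals $G_{X_{n+1}}(r_n/s_n)$ while $r_n/s_n=H_{X_n}\circ\cdots\circ H_{X_1}(0)$; this is precisely the ``cross-ratio-type quantity'' you anticipate, and it is the entire content of the lemma, so a complete write-up must carry it out rather than assert it. The paper also records two small points you gloss over: $\nu_0$ has no atoms (from continuity of $\varphi$), and $\nu_0\bigl(\pi^{-1}(\pi(I_n(x)))\setminus I_n(x)\bigr)=0$ because $\pi$ is not injective but the overlap is countable; both are needed before ``agreement on $N$-adic intervals'' yields equality of measures.

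One genuine caution about your ``cleanest route'': it is circular as stated. Equation \eqref{g-me} at $y=0$ expresses $\mu_0$ in terms of the \emph{different} measures $\mu_{H_i(0)}$, so the distribution function of $\mu_0$ does not satisfy any self-referential equation of the form \eqref{dR-N} on its face; you cannot invoke uniqueness of the continuous solution of \eqref{dR-N} until you have already proved that the unfolding of de Rham's equation into the graph-directed system over the orbit $Y(0)$ is correct --- which is again the cylinder identity above. Likewise, re-deriving $\nu_0\circ\pi^{-1}=\mu_0$ is just \eqref{mu-nu-pi} and contributes nothing new here. So the only viable path is your paragraph-two plan, completed by the explicit matrix computation.
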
 

By this lemma, we identify $((\Sigma_N)^{\mathbb{N}}, \{F_i\}_i)$ with $([0,1], \{S_i\}_i)$ where we let $S_i (z) := (z+i)/N$.  ; 

\begin{proof} 
Let \[ \begin{pmatrix} p_{n}(x) & q_{n}(x)  \\ r_{n}(x) & s_{n}(x) \end{pmatrix} := A_{X_{1}(x)} \cdots A_{X_{n}(x)}, \ \ n \ge 1. \] 
We have that  
\begin{align*}
\mu_{\varphi}\left(\pi (I_{k}(x)) \right) &= \mu_{\varphi}\left(\left[\sum_{i=1}^{k} \frac{X_{j}(x)}{2^{j}}, \sum_{i=1}^{k} \frac{X_{j}(x)}{2^{j}} + \frac{1}{2^{k}}\right)\right)\\
&= \Phi\left(A_{X_{1}(x)} \cdots A_{X_{k}(x)}; 1\right) - \Phi\left(A_{X_{1}(x)} \cdots A_{X_{k}(x)}; 0\right) \\
&= \frac{p_{k}(x)s_{k}(x)-q_{k}(x)r_{k}(x)}{s_{k}(x)(r_{k}(x)+s_{k}(x))}.
\end{align*}

Therefore by computation, (Recall \eqref{widetilde} for the definition of $\widetilde R_{0, n+1}(x)$.)
\[ \frac{\mu_{\varphi}\left(\pi (I_{n+1}(x)) \right) }{\mu_{\varphi}\left(\pi (I_{n}(x)) \right) } 
= \frac{\widetilde R_{0, n+1}(x)}{\widetilde R_{0,n}(x)} = \frac{\nu_0 (I_{n+1}(x))}{\nu_0 (I_{n}(x))}. \]

Hence $\mu_{\varphi}\left(\pi (I_{n}(x)) \right) = \nu_0 (I_{n}(x))$.
Since $\varphi$ is continuous, we have that 
\[ \lim_{n \to \infty} \mu_{\varphi}\left(\pi (I_{n}(x)) \right) = 0,\] 
and hence,  $\nu_0$ has no atoms. 
Since $\pi^{-1} (\pi (I_{n}(x))) \setminus I_{n}(x)$ is at most countable, 
\[ \nu_0 \left(\pi^{-1} (\pi (I_{n}(x))) \setminus I_{n}(x)\right) = 0.\] 
Since $\{\pi(I_n (x))  \mid  n \ge 1, x \in (\Sigma_N)^{\mathbb{N}} \}$ generates the Borel $\sigma$-algebra on $[0,1]$, 
\[ \nu_0 \circ \pi^{-1} = \mu_{\varphi}.\]  
\end{proof}

The following is the main result of this subsection. 

\begin{Thm}[Upper bound for Hausdorff dimension]\label{main-dR}
(i) If either 
\begin{equation}\label{G-non-empty} 
\bigcap_{i \in \Sigma_N}  G_i^{-1} \left(\frac{1}{N}\right) \ne \emptyset 
\end{equation}  
or 
\begin{equation}\label{include} 
\bigcap_{i \in \Sigma_N}  G_i^{-1} \left(\frac{1}{N}\right) \subset \bigcap_{i \in \Sigma_N} \textup{Fix}(H_i) 
\end{equation} 
fails, then, 
\[ \dim_H \mu_{\varphi} < 1. \]
(ii) If \eqref{G-non-empty} and \eqref{include} hold, 
then, 
the distribution function of $\mu_{\varphi}$ is given by 
\begin{equation}\label{ac} 
\varphi(x) = \frac{(N-1)x}{N -1 - c_{0} N (x-1)}, 
\end{equation}  
\end{Thm} 

In other words, if the solution $\varphi$ of \eqref{dR-N} is not of the form of \eqref{ac}, then, $\dim_H \mu_{\varphi} < 1$, 
and hence, $\varphi$ is a singular function.    
This is a special case of Theorem \ref{main-dR-gen} below. 
For proof, see the proof of Theorem \ref{main-dR-gen}.  

\begin{Rem}[Lower bound for Hausdorff dimension]
Assume $a_0 < 1$ and $b_{N-1} + c_{N-1} > 0$.  
Let $\widetilde c$ be the constant in \eqref{inf-multi}.   
Then, by Proposition \ref{transfer} and Lemma \ref{any}, 
\[ \dim_H \mu_{\varphi} \ge 1 + \frac{\inf\left\{s_N \left((p_j)_{j \in \Sigma_N} \right) \ \middle| \ (p_j)_{j \in \Sigma_N} \in P_N, \widetilde c \le p_j \le 1 - \widetilde c, \forall j \right\}}{\log N} > 0. \]
\end{Rem} 

\begin{Prop}[Regularity and singularity with respect to self-similar measures]\label{singu-ber-dR}
Let $p_i \in (0,1)$, $0 \le i \le N-1$.  
Let $e_0 := c_0 / (1-a_0)$ and assume $a_0 < 1$.     
Then,\\
(i) If there exist $p_i \in (0,1)$, $0 \le i \le N-1$ such that 
\begin{equation}\label{ai-ber} 
a_i = \frac{p_i + e_0 \sum_{j = 0}^{i} p_j}{\left(1 - \sum_{j = 0}^{i-1} p_j \right)e_0 + 1}.  
\end{equation} 
\begin{equation}\label{bi-ber}  
b_{i} = \frac{\sum_{j = 0}^{i-1} p_j}{\left(1 - \sum_{j = 0}^{i-1} p_j \right)e_0 + 1}. 
\end{equation}  
\begin{equation}\label{ci-ber}  
c_i = \frac{e_0 \left(\left(1- \sum_{j = 0}^{i} p_j \right) e_0 + 1 - p_i \right)  }{\left(1 - \sum_{j = 0}^{i-1} p_j \right) e_0 + 1}.  
\end{equation} 
hold for every $i$, 
then, $\mu_{\varphi}$ is absolutely continuous with $(p_0, \dots, p_{N-1})$-self-similar measure $\mu_{(p_0, \dots, p_{N-1})}$.\\
(ii) If the assumptions of (i) fails, then, $\mu_{\varphi}$ is singular with respect to every $(p_0, \dots, p_{N-1})$-self-similar measure. 
\end{Prop}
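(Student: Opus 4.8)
Everything is governed by the point $e_0=c_0/(1-a_0)$, which (since $a_0<1$ and $H_0(z)=a_0z+c_0$ because $b_0=0$) is the unique fixed point of $H_0$; write $S_k=p_0+\dots+p_k$, $S_{-1}=0$. The first thing I would record is the elementary equivalence: the hypotheses of (i) hold for some $(p_i)\in(0,1)^N$ \emph{if and only if} $e_0$ is a common fixed point of $H_0,\dots,H_{N-1}$, in which case necessarily $p_i=G_i(e_0)$. The forward direction is a direct substitution into \eqref{ai-ber}--\eqref{ci-ber}. For the converse, if every $H_i$ fixes $e_0$ put $p_i:=G_i(e_0)$; then $\sum_ip_i=1$ by \eqref{G-sum-1}, and $e_0>-1$ (otherwise $\sum_iG_i(-1)=0\neq1$), so, using that the $b_j$ are strictly increasing, each $p_i\in(0,1)$; and then \eqref{ai-ber}--\eqref{ci-ber} follow since these formulas are the unique solution of the linear system obtained from (A1), $H_i(e_0)=e_0$ and $G_i(e_0)=p_i$.

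For (i) the plan is to turn this fixed-point relation into a simultaneous conjugacy of the defining matrices. A direct computation from \eqref{ai-ber}--\eqref{ci-ber} gives, for all $i$, $A_i=P\,\widehat U_iP^{-1}$ (up to a scalar) with $\widehat U_i=\begin{pmatrix}p_i&S_{i-1}\\0&1\end{pmatrix}$ and $P=\begin{pmatrix}1&0\\-e_0&e_0+1\end{pmatrix}$, that is $g_i=M\circ\widehat u_i\circ M^{-1}$ with $\widehat u_i(w)=p_iw+S_{i-1}$ affine and $M:=\Phi(P;\cdot)$, $M(w)=w/(e_0+1-e_0w)$ an increasing diffeomorphism of $[0,1]$ onto itself (here $e_0>-1$ is used). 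Let $\psi$ be the distribution function of $\mu_{(p_0,\dots,p_{N-1})}$, i.e.\ the unique continuous solution of $\psi(x)=S_{k-1}+p_k\psi(Nx-k)$ on $[k/N,(k+1)/N]$. Substituting $M\circ\psi$ into \eqref{dR-N} and using $g_i=M\circ\widehat u_i\circ M^{-1}$, $M(0)=0$, $M(1)=1$ shows $M\circ\psi$ solves \eqref{dR-N}, so by uniqueness $\varphi=M\circ\psi$. Since $\psi$ pushes $\mu_{(p_0,\dots,p_{N-1})}$ forward to Lebesgue measure on $[0,1]$, the change-of-variables formula for Stieltjes integrals gives $\mu_\varphi(dz)=M'(\psi(z))\,\mu_{(p_0,\dots,p_{N-1})}(dz)$, and $M'(\psi(z))=(e_0+1)/(e_0+1-e_0\psi(z))^2$ is bounded above and below; in particular $\mu_\varphi\ll\mu_{(p_0,\dots,p_{N-1})}$ (in fact the two are equivalent).

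For (ii) assume the hypotheses of (i) fail. By the equivalence above this forces $H_{i^\ast}(e_0)\neq e_0$ for some $i^\ast\in\{1,\dots,N-1\}$ (otherwise $e_0$ would be a common fixed point and $(G_i(e_0))_i$ an admissible tuple). Fix any $(q_0,\dots,q_{N-1})$ with $q_i\in(0,1)$, $\sum_iq_i=1$; I would verify the disjointness condition \eqref{multi-sep-2} for $y=0$ with $l=1$ and then apply Proposition \ref{singu-ss}. Since $a_0<1$, $Y=[\alpha,\beta]$ is compact and $G_0$ is a strictly increasing homeomorphism of $Y$, so $\bigcap_iG_i^{-1}(\{q_i\})$ is empty or a single point $z^\ast$. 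If it is empty then $\bigcap_iG_i^{-1}([q_i-\epsilon_0,q_i+\epsilon_0])=\emptyset$ for small $\epsilon_0$ and \eqref{multi-sep-2} is immediate. If it equals $\{z^\ast\}$, then $z^\ast$ cannot be fixed by all the $H_i$: otherwise $H_0(z^\ast)=z^\ast$ forces $z^\ast=e_0$, hence all $H_i(e_0)=e_0$, a contradiction. So $H_{i_1}(z^\ast)\neq z^\ast$ for some $i_1\in\Sigma_N$, whence $G_j(H_{i_1}(z^\ast))\neq q_j$ for some $j$; as $G_0^{-1}([q_0-\epsilon_0,q_0+\epsilon_0])$ shrinks to $\{z^\ast\}$ when $\epsilon_0\downarrow0$ and the finitely many continuous functions $G_i$, $G_j\circ H_{i_1}$ vary little there, \eqref{multi-sep-2} holds with this $i_1$ for small $\epsilon_0$. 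In either case Proposition \ref{singu-ss}(i) gives $\nu_0\perp\nu_{(q_0,\dots,q_{N-1})}$, and since $\pi^{-1}(\pi(A))\setminus A$ is at most countable for every $A$, Proposition \ref{singu-ss}(ii) gives $\mu_\varphi=\nu_0\circ\pi^{-1}\perp\mu_{(q_0,\dots,q_{N-1})}$.

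The substantive point is the conjugacy $A_i=P\widehat U_iP^{-1}$ in (i): the insight that \eqref{ai-ber}--\eqref{ci-ber} are exactly a Möbius twist of the affine (hence self-similar) system is what makes part (i) short, and its verification, though only linear algebra, requires care to keep track of the common conjugator $P$ and of the places where $e_0>-1$ (equivalently $b_{N-1}+c_{N-1}>0$) is needed so that $M$ preserves $[0,1]$. Once the conjugacy is established, everything else — the equivalence in the first paragraph, $e_0>-1$, the continuity/compactness arguments and the appeal to Proposition \ref{singu-ss} in (ii) — is routine.
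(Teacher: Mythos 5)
Your proof of part (ii) is essentially the paper's: both reduce to verifying the disjointness condition \eqref{multi-sep-2} with $l=1$ and then invoking Proposition \ref{singu-ss}; the paper splits into the cases $p_0\neq a_0$ versus $p_0=a_0$ with either $G_i(e_0)=p_i$ or $H_i(e_0)=e_0$ failing, while you split according to whether $\bigcap_i G_i^{-1}(q_i)$ is empty or a singleton not fixed by all $H_i$ --- the two case analyses are equivalent, both resting on the strict monotonicity of $G_0$ and the fact that $e_0$ is the unique fixed point of the affine map $H_0$. Part (i), however, is genuinely different. The paper works on the symbolic space: it observes that $H_i(e_0)=e_0$ and $H_i'(e_0)=p_i$, so the orbit $H_{X_n}\circ\cdots\circ H_{X_1}(0)$ converges to $e_0$ exponentially fast $\nu_0$-a.s.\ (via Azuma's inequality and the contractivity of $H_0$ on $[\alpha,\beta]$), which makes the Kakutani-type series $\sum_n\bigl(1-\sum_i\sqrt{p_iG_i(\cdot)}\bigr)$ converge; absolute continuity of $\nu_0$ with respect to the Bernoulli measure then follows from \cite[Theorem VII.6.4]{Sh80}. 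Your route instead exhibits the explicit M\"obius conjugacy $A_i\propto P\widehat U_iP^{-1}$ (which I checked: \eqref{ai-ber}--\eqref{ci-ber} are exactly the entries of the normalized $P\widehat U_iP^{-1}$), hence $\varphi=M\circ\psi$ with $M'$ bounded above and below on $[0,1]$. This is a stronger conclusion --- mutual absolute continuity with the explicit density $d\mu_\varphi/d\mu_{(p_0,\dots,p_{N-1})}=M'(\psi(\cdot))$ --- and is notable because the paper explicitly remarks after the Proposition that it does not know an expression for this Radon--Nikodym derivative; the price is the linear-algebra verification of the conjugacy and of $e_0>-1$. On that last point, your parenthetical justification ``otherwise $\sum_iG_i(-1)=0\neq1$'' is not quite right (one has $G_{N-1}(-1)=1-b_{N-1}\cdot(\text{denominator})^{-1}\cdot(\cdots)=1$ after the cancellation of the factor $y+1$, since $b_N=1$), but the conclusion is harmless: under the hypotheses of (i), $e_0=-1$ would force $b_i=1$ in \eqref{bi-ber} for $i\ge1$, contradicting $0<b_i<1$, and in fact (A2)--(A3) together with $a_0<1$ already exclude $e_0=-1$.
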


We do not know about an explicit expression for the Radon-Nikodym derivative $\dfrac{d\mu_{\varphi}}{ d\mu_{(p_0, \dots, p_{N-1})}}$. 

\begin{proof} 
(i) We first remark that in this case, $0 < a_0 = p_0 < 1$ and $\beta < +\infty$. 
By computation, 
\[ H_i ( e_0 ) = e_0,  \textup{ and  }  H_i^{\prime} ( e_0 ) = p_i,\]  
where $H_i^{\prime}$ denotes the derivative of $H_i$. 

Hence, each $H_i$ is contractive on a neighborhood $U$ of $e_0$.  
Since $H_0$ is contractive on $[\alpha, \beta]$, 
there exists $M$ such that it holds that $H_0^M (x) \in U$ for every $x \in [\alpha, \beta]$.  

By Azuma's inequality, 
\[ \nu_0 \left( \bigcup_{i \ge 1} \bigcap_{j = 1}^{M} \{X_{i+j} = 0\} \right) = 1.  \] 
Hence,  
\[ \lim_{n \to \infty} H_{X_n(x)} \circ \cdots \circ H_{X_1(x)}(0) = e_0,  \textup{ $\nu_0$-a.s.$x$.}\]   
and this convergence is exponentially fast\footnote{the speed of convergence depends on the choice of $x$.}. 
Hence,  
\[ \sum_{n \ge 1} \left(1 - \sum_{i \in \Sigma_N} \sqrt{p_i G_{i}\left( H_{X_n(x)} \circ \cdots \circ H_{X_1(x)}(0) \right)}\right) < +\infty,  \textup{ $\nu_0$-a.s.$x$.}\]
By this and \cite[Theorem VII.6.4]{Sh80}, 
$\nu_0$ is absolutely continuous with respect to $\nu_{(p_0, \dots, p_{N-1})}$.  
Hence, 
$\mu_{\varphi}$ is  absolutely continuous with respect to $\mu_{(p_0, \dots, p_{N-1})}$.  

(ii) Let $\mu_{(p_0, \dots, p_{N-1})}$ be  $(p_0, \dots, p_{N-1})$-self-similar measure.  
By the definition of $\pi$, 
we have that $\pi^{-1}(\pi(A)) \setminus A$ is at most countable for every $A$. 
First we consider the case that $p_0 \ne a_0$.  
Then, $\textup{Fix}(H_0) = e_0 \ne G_0^{-1}(p_0)$.  
Therefore, if $G_0 (y) = p_0$, then,  $G_0 (H_0 (y)) \ne p_0$. 
Thus \eqref{multi-sep-2} holds for $l = 1$ and $i_1 = 0$.   

Assume that $p_0 = a_0$. 
Then, either 
(a) $G_i (e_0) = p_i, i \in \Sigma_N$, or 
(b) $H_i (e_0) = e_0, i \in \Sigma_N$,   
fails, because if both (a) and (b) hold, then, \eqref{ai-ber}, \eqref{bi-ber} and \eqref{ci-ber} follow. 

Assume that (a) fails. 
Then, by using that $p_0 = a_0$ and $G_0$ is strictly increasing, 
$\cap_{i} G_i^{-1}(p_i) = \emptyset$.   
Since each $G_i$ is continuous and $Y$ is compact, 
\eqref{multi-sep-2} holds for every $l$ and $i_1, \dots, i_l$. 
Assume that (b) fails.  
Then, $H_i (e_0) \ne e_0$ for some $i$, and \eqref{multi-sep-2} holds for $l = 1$ and $i_1 =  i$. 
\end{proof}

\begin{Exa}[Linear case]
If all $c_i$ are zero, that is, all $g_i$ are affine maps, then, $\alpha = \beta = 0$, and hence $Y = \{0\}$ and $G_i (0) = a_i$. 
Then, $\mu_{\varphi}$ is $(a_0, \dots, a_{N-1})$-self-similar measure. 
We have that 
\[ \dim_H \mu_{\varphi} = 1+ \frac{s_N \left( (a_0, \dots, a_{N-1}) \right)}{\log N}. \]
\end{Exa}

\begin{Exa}
Let $N = 2$. 
Consider \eqref{dR-N} for 
\[ A_0 = \begin{pmatrix} 1 & 0 \\ 1 & 1 \end{pmatrix}, \ A_1 = \begin{pmatrix} 0 & 1 \\ -1 & 2 \end{pmatrix},   \]
then, $g_0 (x) = x/(x+1)$ and $g_1 (x) = 1/(2-x)$. 
Then, (A1)-(A3) holds, and hence, a unique solution $\varphi$ of \eqref{dR-N} exists. 
$\varphi$ is the {\it inverse} function of Minkowski's question-mark function \cite{Mi1904}. 
But (sA3) fails. 
Then, $Y = [-1, +\infty]$, 
\[ G_0 (x) = \frac{x+1}{x+2}, \ H_0(x) = x+1, \ H_1(x) = -\frac{1}{x+2}, \] 
and, $\mu_{\varphi} = \mu_0$.  
In this case, \eqref{A-y} may fail, but \eqref{multi-sep} holds for {\it every} $i_1, i_2 \in \{0,1\}$.    
By Lemma \ref{key}, 
there is a constant $\epsilon_1 > 0$ such that for every $i \in \N$ and {\it every} $x \in \{0,1\}^{\N}$, 
\begin{equation}\label{inverse-Minkowski-singular}  
s_{2}(p_{i} (y; x)) + s_{2}(p_{i + 1} (y; x)) < -\epsilon_1. 
\end{equation}

Hence, by Theorem \ref{main} (i), 
we have that $\dim_H \mu_{\varphi} < 1$.
In this case, for $y = 0$, \eqref{wAy} fails. 
We are not sure whether $\dim_H \mu_{\varphi} > 0$.  
By Proposition \ref{singu-ss}, 
$\mu_{\varphi}$ is singular with respect to every $(p_0, p_1)$-self-similar measure. 
\eqref{inverse-Minkowski-singular} is stable with respect to small perturbations for $g_0$ and $g_1$. 
See \cite[Example 2.8 (ii)]{Ok16} for a way of  small perturbations for $g_0$ and $g_1$.  
It is known that $\mu_{\varphi}$ is regular (\cite{MT19+}). 
\end{Exa}

\begin{Rem}
If $\widetilde \mu$ is the measure on $[0,1]$ such that its distribution function is Minkowski's question-mark function, then, by \cite{Kin60} it is known that 
\begin{equation}\label{Kinney} 
\dim_H \widetilde \mu = \frac{\log 2}{2 \int_{[0,1]} \log(1+x) \widetilde \mu(dx)} > \frac{1}{2}. 
\end{equation} 
\end{Rem}

\begin{Rem}
(i) In the case that each $g_i$ is a linear fractional transformation, 
we can take $Y$ as a subset of the set of real numbers. 
However, if  some $g_i$ are not linear fractional transformations, 
then, we may not be able to take $Y$ as a subset of $\R$. 
Therefore, our approach may not work well, at least in a direct manner.   
One difficulty is that a set of functions can be very large, informally speaking.\\
(ii) The approaches in \cite{Ha85, SLK04, Ok16} are different from the one used here. 
As an outline level, they are somewhat similar to each other. 
\end{Rem}

\subsection{Other homogeneous examples}

\begin{Exa}
We give an example that \eqref{multi-sep} fails for every $i_1, \dots, i_l$, $l = 1,2$ both,  
but \eqref{multi-sep} holds for every $i_1, \dots, i_l$, $l = 3$. 
Let $N = 2$. 
Let $Y = \R$. 
Let 
\[ G_0 (x) := \frac{1}{6}1_{\{x < 0, x > 1\}} + \left(x+\frac{1}{6}\right) 1_{\{0 \le x \le 1/2\}} + \left(\frac{7}{6} -x\right) 1_{\{1/2 \le x \le 1\}}.  \]
Let 
\[ H_0 (y) = H_1 (y) = \frac{5 - 3y}{6}. \]
Then, 
\[ G_0 \left(\frac{1}{3}\right) = G_0 \left(H_0 \left(\frac{1}{3}\right)\right) = G_0 \left(\frac{2}{3}\right) = \frac{1}{2}. \]
\[ G_0 \left(H_0^2 \left(\frac{1}{3}\right)\right) = G_0 \left(\frac{1}{2}\right) = \frac{2}{3}. \]  
\[ G_0 \left(H_0^2 \left(\frac{2}{3}\right)\right) = G_0 \left(H_0 \left(\frac{1}{2}\right)\right) = G_0 \left(\frac{7}{12}\right) = \frac{7}{12}. \]
\end{Exa} 

\begin{Exa}
Let $N = 2$ and $Y = Z = [0,1]$. 
Let 
\[ G_0 (x) = H_0 (x) = \frac{1}{4} 1_{\{x < 1/4\}} + x1_{\{1/4 \le x \le 3/4\}} + \frac{3}{4} 1_{\{x > 3/4\}}.  \]
Let $ H_1(x) = 1 - G_0(x)$.
Then, 
\[ G_0 \left(\frac{1}{2}\right) = G_1 \left(\frac{1}{2}\right) = H_0 \left(\frac{1}{2}\right) = H_1 \left(\frac{1}{2}\right) = \frac{1}{2}, \]
and for every $y \ne 1/2$,  $\dim_H \mu_y  < 1$.   
Therefore, if we did not introduce $Y(y)$ in the condition that 
\eqref{multi-sep} holds for some $\epsilon_0 \in (0, 1/N)$, $l \ge 1$ and $i_1, \dots, i_{l} \in  \Sigma_N$, 
 and furthermore simply assumed that the intersection of $\bigcap_{i \in \Sigma_N} G_{i}^{-1}\left( \left[ \frac{1}{N} - \epsilon_0, \frac{1}{N} + \epsilon_0\right]\right)$ and $\bigcap_{j \in \Sigma_N} \left(G_{j} \circ H_{i_{l}} \circ \cdots \circ H_{i_{1}}\right)^{-1} \left( \left[ \frac{1}{N} - \epsilon_0, \frac{1}{N} + \epsilon_0\right]\right)$  is empty,   
then, the converse of Theorem \ref{main} (i) would not hold. 
\end{Exa} 

\begin{Exa}
Fix $p \in (0,1)$. 
Let $N = 2$ and 
\[ Y = \left[-\min\{p,1-p\}^2, \min\{p,1-p\}^2\right] \] and $Z = [0,1]$.  
Let \[ G_0 (y) = p + \sqrt{|y|}, \textup{ and } H_0 (y) = H_1 (y) = \frac{|y|}{|y| + 1}. \]
Then, for every $x \in (\Sigma_N)^{\N}$ and $y \in Y$, 
\[ \lim_{n \to \infty} G_0 \circ H_{X_n (x)} \circ \cdots \circ H_{X_1 (x)}(y)  = p. \]
By this and Proposition \ref{transfer}, 
\[ \lim_{n \to \infty} \frac{-\log R_{y,n}(x)}{n} = \lim_{n \to \infty} s_2 \left( (G_j \circ H_{X_n (x)} \circ \cdots \circ H_{X_1 (x)}(y))_j \right) = s_2 (p,1-p), \ \textup{ $\nu_y$-a.s.$x$.} \]  
By using Example \ref{exa-cover} (i) and Lemmas \ref{exist} and \ref{any},        
we can show that for every $y \in Y$, $$\dim_H \mu_y = 1 + \frac{s_2 (p, 1-p)}{\log 2}.$$

Furthermore,  $\mu_y$ is the product measure on $\{0,1\}^{\N}$ such that 
\[ \mu_y (X_n = 0) =  p + \sqrt{H_0^{n-1} (y)} = p + \sqrt{\frac{|y|}{(n-1)|y| + 1}}.  \]

If $y = 0$, then, $\mu_y$ is the $(p, 1-p)$-self-similar measure.  
If $y \ne 0$, then, by Kakutani's dichotomy \cite{Kak48},  
$\mu_y$ is singular with respect to the $(p, 1-p)$-self-similar measure.  
\end{Exa} 

\begin{Exa}
Let $Z = [0,1]$ and $N = 2$.  
Let $Y = \R$.  
Let \[ G_0 (y) := \max\left\{\frac{1}{2} - |y|, 0 \right\},  H_0 (y) := (1-\epsilon)y, \textup{ and, }  H_1 (y) := \frac{y}{\epsilon}.  \] 
Let $y \ne 0$. 
Then, \eqref{A-y} fails. 
By using that $H_0$ is contractive and $G_0 (0) = G_1(0) = 1/2$ and $0$ is the fixed points of $H_0$ and $H_1$ both,  
we see that it does not hold that \eqref{multi-sep} holds for some $\epsilon_0 \in (0, 1/N)$, $l \ge 1$ and $i_1, \dots, i_{l} \in  \Sigma_N$. 

We will show that $\dim_H \mu_{y} < 1$. 
For simplicity we assume $y = 1/4$.  
By Azuma's inequality, if we take sufficiently small $\epsilon > 0$, 
then, 
there exists $D > 1$ such that for every $n \ge 1$, 
\[ \ell \left(\left\{x \in \{0,1\}^{\N} \ \middle| \ \left| H_{X_n (x)} \circ \cdots \circ H_{X_1 (x)} \left(\frac{1}{4}\right) \right| < \frac{1}{2} \right\}\right) \le D^{-n}, \] 
where $\ell$ is the $(1/2, 1/2)$-Bernoulli measure on $\{0,1\}^{\N}$. 
Therefore, by the definition of $\nu_y$, for each $n$, 
\[ \left| \left\{(i_1, \dots, i_n) \in \{0,1\}^n \ \middle| \  \nu_y (I(i_1, \dots, i_n, 0)) > 0 \right\} \right| \] 
\[ =  \left| \left\{(i_1, \dots, i_n) \in \{0,1\}^n \ \middle| \ \left| H_{i_n} \circ \cdots \circ H_{i_1} \left(\frac{1}{4}\right) \right| <  \frac{1}{2} \right\} \right| \le \left(\frac{2}{D}\right)^n.\]
Hence, by taking sum with respect to $n$,   
\[ \left| \{(i_1, \dots, i_m) \in \{0,1\}^m \ \middle| \  \nu_y (I(i_1, \dots, i_m)) > 0 \} \right| \] 
\[ = \sum_{n < m} \left| \{(i_1, \dots, i_n) \in \{0,1\}^n \ \middle| \  \nu_y \left(I(i_1, \dots, i_n, 0, 1, \dots, 1)\right) > 0 \} \right| \]
\[\le \sum_{n < m} \left(\frac{2}{D}\right)^n \le C \left(\frac{2}{D}\right)^{m}. \]
By using this and recalling that $D > 1$, $\dim_H \mu_{1/4} < 1$.
\end{Exa} 

\begin{Exa}
Let $Z = [0,1]$ and $N = 2$ and $S_i (z) = (z+i)/2$. 
It is easy to construct an example such that $\mu_y$ is not absolutely continuous or singular with respect to $(p,1-p)$-self-similar measure $\mu_{(p,1-p)}$. 
Let $Y = \{0,1,2\}$. 
Let \[ H_0 (0) = 1, H_1(0) = 2, \textup{ and } H_i (j) = j, \ j = 1,2, \ i = 0,1.\]  
Let \[ G_0 (j) = \frac{1}{2}, \ j = 1,2.\]   
Let $\mu_1 = \mu_{(p,1-p)}$ and $\mu_2$ be a probability measure which is singular with respect to $\mu_{(p,1-p)}$. 
Then, by the uniqueness of the Lebesgue decomposition, $\mu_0$ is not absolutely continuous or singular with respect to $\mu_{(p,1-p)}$.  
\end{Exa}

\subsection{Inhomogeneous cases}

We finally deal with the case that $Z$ is an inhomogeneous self-similar set.  This has an application of a further generalization of \eqref{dR-N}.

Let $g_0 (x) := p_0 x$ and 
\[ g_i (x) := p_i x  + (p_0 + \cdots + p_{i-1}), i \ge 1. \]
Consider 
\begin{equation}\label{dR-gen}
	\varphi(x) = \begin{cases}
						g_{0}\left(\varphi(S_0^{-1}(x))\right) & 0 \le x \le q_0, \\
						g_{1}\left(\varphi(S_1^{-1}(x))\right) & q_0  \le x \le q_0 + q_1, \\
						\cdots\\
						g_{N-1}\left(\varphi(S_{N-1}^{-1}(x))\right) & q_0 +\cdots + q_{N-2}  \le x \le 1,     
				          \end{cases}
\end{equation}
Then there exists a unique solution $\varphi$ of \eqref{dR-gen} and let $\mu = \mu_{\varphi}$ be the  probability measure whose density function is $\varphi$.    

We now consider the case that $\{g_i\}_i$ are linear fractional transformations on $[0,1]$ satisfying (A1) -(A3) in Subsubsection 3.4.2.  
Define a space $Y$ and  functions $G_i, H_i, i \in \Sigma_N$ on $Y$ in the same manner as in Subsubsection 3.4.2. 
Let $q_i, i \in \Sigma_N$ be positive numbers satisfying that $$q_0 + \cdots + q_{N-1} = 1.$$
Let $S_0 (x) := q_0 x$ and 
\[ S_i (x) := q_i x  + (q_0 + \cdots + q_{i-1}), i \ge 1. \]
Then we have the following generalization of Theorem \ref{main-dR}. 

\begin{Thm}\label{main-dR-gen}
(i) If either 
\begin{equation}\label{G-non-empty-gen} 
\bigcap_{i \in \Sigma_N}  G_i^{-1} \left( q_i\right) \ne \emptyset 
\end{equation}  
or 
\begin{equation}\label{include-gen} 
\bigcap_{i \in \Sigma_N}  G_i^{-1} \left(q_i\right) \subset \bigcap_{i \in \Sigma_N} \textup{Fix}(H_i) 
\end{equation} 
fails, then, 
\[ \dim_H \mu_{\varphi} < 1. \]
(ii) If \eqref{G-non-empty-gen} and \eqref{include-gen} hold, 
then, 
the distribution function $\varphi$ of $\mu_{\varphi}$ is given by 
\begin{equation}\label{ac-gen} 
\varphi(x) = \frac{(1-q_0)x}{1 -q_0 + c_0 - c_0 x}, \textup{  where we let $C_{0} := \frac{c_0}{1-q_0}$. }
\end{equation}  
\end{Thm}

\begin{proof}
(i) If \eqref{G-non-empty-gen} fails and $\beta < +\infty$, 
then, by the continuity of $G_i$ and the compactness of $Y$, 
we have that 
$$\inf_{y \in Y} \sum_{i \in \Sigma_N} \left| G_i (y) - q_i \right| > 0.$$
This holds even when $\beta = +\infty$, by recalling the definition of $G_i (+\infty)$. 
Hence, 
\[ \sup_{x \in (\Sigma_N)^{\mathbb{N}}, y \in Y, i \in \mathbb{N}} s_N \left(p_i (y; x)\right) < 0,  \]
where $s_N$ denotes the relative entropy with respect to $(q_i)_i$. 
Now the assertion follows from Proposition \ref{transfer} and Lemma \ref{exist}. 

Assume that  \eqref{G-non-empty-gen} holds and \eqref{include-gen} fails. 
Let 
\[ y_0 := G_0^{-1} \left(q_0\right). \]
Then, by \eqref{G-non-empty-gen}, we have that 
\begin{equation}\label{unique-gen} 
\bigcap_{i \in \Sigma_N}  G_i^{-1} \left(q_i\right)  = \left\{y_0\right\}. 
\end{equation} 
Since \eqref{include-gen} fails, we have that for some $i$, 
\[ H_i \left(y_0\right) \ne y_0. \]
Since $G_0$ is monotone increasing, 
\eqref{multi-sep} holds for $l = 1$ and $i_1 =  i$. 

(ii) By the assumption, \eqref{unique-gen} holds, and hence it holds that for each $i \in \Sigma_N$, 
\[ G_i \left( y_0 \right) = q_i, \textup{ and }   H_i \left( y_0 \right) = y_0.  \] 

By using them, 
we see that for every $i \in \Sigma_N$, 
\begin{equation}\label{a-i-c-i-gen} 
a_i = (1+q_i) y_0 b_i + q_i, \textup{ and } c_i = y_0 (1- q_i - q_i y_0 b_i). 
\end{equation}

By induction in $i$, we can show that 
\begin{equation}\label{b-i-gen}   
b_i = \frac{q_0 + \cdots + q_{i-1}}{1 + y_0 (1 - (q_0 + \cdots + q_{i-1}))},  \ i \ge 1. 
\end{equation} 

If $i = 0$, then, by (A1), $b_0 = 0$. 
We see that 
\[ y_0 = \frac{c_0}{1-q_0}. \]
By \eqref{a-i-c-i-gen} and \eqref{b-i-gen},  it is easy to see that $\varphi$ given by \eqref{ac-gen} satisfies \eqref{dR-gen}.  
\end{proof} 

In the special case that each $g_i$ is linear, we have the following: 
\begin{Cor}
Let $N \ge 2$. 
Let $p_i, q_i, i \in \Sigma_N$ be positive numbers satisfying that 
$$p_0 + \cdots + p_{N-1} = q_0 + \cdots + q_{N-1} = 1.$$
Let $S_0 (x) := q_0 x$ and 
\[ S_i (x) := q_i x  + (q_0 + \cdots + q_{i-1}), i \ge 1. \]
Let $g_0 (x) := p_0 x$ and 
\[ g_i (x) := p_i x  + (p_0 + \cdots + p_{i-1}), i \ge 1. \]
Consider \eqref{dR-gen}. 
If $(p_0, \cdots, p_{N-1}) \ne (q_0, \cdots, q_{N-1})$, then, 
\[ \dim_{H} \mu_{\varphi} < 1. \]
Otherwise, $\mu$ is the Lebesgue measure on $[0,1]$.  
\end{Cor}

In this case, \cite[Theorem 3.9]{Ok19+} could also give singularity of $\mu_{\varphi}$. However, it is not sufficient to give singularity  if $g_i$ is not linear for some $i$, for example, it is a linear fractional transformation as above.

\section{Open problems}

(1) Give estimates for the upper and lower local dimensions of $\mu_y$ and consider the Hausdorff dimensions for the level sets.    
Consider other notions of dimensions for $\mu_y$, such as $L^p$-dimensions. 
See \cite{St93}.  

(2) If $Z = [0,1]$, then, under what conditions is $\mu_y$ a Rajchman measure \cite{R28}, that is, a measure whose Fourier transform vanishes at infinity? 
See \cite{Ly95} for more information of Rajchman measures.   
Recently, \cite{JS16} shows that the Fourier coefficients of $\widetilde \mu$ decay to zero, 
by considering conditions which assure a given measure invariant with respect to the Gauss map is a Rajchman measure, and using \eqref{Kinney}. 

(3) Let $Z = [0,1]$. 
It is natural to consider structures of $L^2 ([0,1], \mu_y)$. 
For example, find a subset $P$ of $\N$ such that $\{\exp(2\pi i k) : k \in P\}$ forms an orthonormal basis of $L^2 ([0,1], \mu_y)$.   
\cite{JP98} considers this question for a class of self-similar measures.     

\vspace{1\baselineskip} 

{\it Acknowledgment.}    
The author appreciates a referee for suggesting an extension of our main results and many comments. 
The author was supported by JSPS KAKENHI 16J04213, 18H05830,  19K14549 and also by the Research Institute for Mathematical Sciences, a Joint Usage/Research Center located in Kyoto University.

\end{document}